\numberwithin{equation}{section}
\theoremstyle{definition}
\newtheorem{rmk}[equation]{Remark}
\newtheorem{cnstr}[equation]{Construction}
\newtheorem{exm}[equation]{Example}
\newtheorem*{dfn*}{Definition}
\newtheorem*{axm*}{Axiom}
\newtheorem*{ntn*}{Notation}
\newtheorem*{exm*}{Example}
\newtheorem*{exr*}{Exercise}
\newtheorem*{int*}{Intuition}
\newtheorem*{qst*}{Question}
\newtheorem*{rmk*}{Remark}
\theoremstyle{plain}
\newtheorem{sch}[equation]{Scholium}
\newtheorem{thm}[equation]{Theorem}
\newtheorem{prop}[equation]{Proposition}
\newtheorem{lem}[equation]{Lemma}
\newtheorem{cor}[equation]{Corollary}
\newtheorem*{thm*}{Theorem}
\newtheorem*{prop*}{Proposition}
\newtheorem*{cor*}{Corollary}
\newtheorem*{lem*}{Lemma}
\newtheorem*{cnj*}{Conjecture}
\let\oldwidetilde\widetilde
\protected\def\widetilde{\oldwidetilde}
\DeclareMathOperator{\Map}{\mathrm{Map}}
\DeclareMathOperator{\Hom}{\mathrm{Hom}}
\newcommand{\Ss}{\mathbb{S}}
\newcommand{\F}{\mathbb{F}}
\newcommand{\NN}{\mathbb{N}}
\newcommand{\A}{\mathbb{A}}
\DeclareMathOperator{\MU}{\mathrm{MU}}
\DeclareMathOperator{\Spec}{\text{Spec}}
\DeclareMathOperator{\Pic}{\mathrm{Pic}}
\DeclareMathOperator{\Fin}{\mathrm{Fin}}
\DeclareMathOperator{\Ext}{\mathrm{Ext}}
\DeclareMathOperator{\Fun}{\mathrm{Fun}}
\DeclareMathOperator{\Alg}{\mathrm{Alg}}
\newcommand{\BP}{\mathrm{BP}}
\newcommand{\Z}{\mathbb{Z}}
\DeclarePairedDelimiter\abs{\lvert}{\rvert}%
\let\oldabs\abs
\def\abs{\@ifstar{\oldabs}{\oldabs*}}
\let\oldtocsection=\tocsection
\let\oldtocsubsection=\tocsubsection
\let\oldtocsubsubsection=\tocsubsubsection
\renewcommand{\tocsection}[2]{\hspace{0em}\oldtocsection{#1}{#2}}
\renewcommand{\tocsubsection}[2]{\hspace{1em}\oldtocsubsection{#1}{#2}}
\renewcommand{\tocsubsubsection}[2]{\hspace{2em}\oldtocsubsubsection{#1}{#2}}
\let\scr=\mathcal
\def\comp{\wedge}
\newcommand{\MGL}{\mathrm{MGL}}
\newcommand{\BPGL}{\mathrm{BPGL}}
\newcommand{\PMGL}{\mathrm{PMGL}}
\newcommand{\KGL}{\mathrm{KGL}}
\newcommand{\SH}{\mathcal{SH}}
\newcommand{\DA}{\mathcal{DA}}
\newcommand{\Spc}{\mathcal{S}\mathrm{pc}}
\newcommand{\et}{{\acute{e}t}}
\newcommand{\ret}{{r\acute{e}t}}
\newcommand{\wequi}{\simeq}
\newcommand{\1}{\mathbbm{1}}
\def\map{\mathrm{map}}
\DeclareRobustCommand{\ul}{\underline}
\def\PSh{\mathcal{P}}
\newcommand{\Shv}{\mathcal{S}\mathrm{hv}}
\def\ph{\mathord-}
\def\op{\mathrm{op}}
\newcommand{\Ab}{\mathrm{Ab}}
\def\Cat{\mathcal{C}\mathrm{at}{}}
\def\CAlg{\mathrm{CAlg}}
\def\adj{\rightleftarrows}
\def\Sm{{\mathcal{S}\mathrm{m}}}
\def\SmQP{{\mathcal{S}\mathrm{mQP}}}
\def\SmQA{{\mathcal{S}\mathrm{mQA}}}
\def\SmAff{{\mathcal{S}\mathrm{mAff}}}
\def\Sch{{\mathcal{S}\mathrm{ch}}}
\def\FEt{\mathrm{FEt}{}}
\newcommand{\Mod}{\mathrm{Mod}}
\def\P{\mathbb P}
\def\CMon{\mathrm{CMon}}
\def\PrL{\mathrm{Pr}^\mathrm{L}}
\def\Nis{\mathrm{Nis}}
\def\Zar{\mathrm{Zar}}
\def\mot{\mathrm{mot}}
\newcommand{\lra}[1]{\langle #1 \rangle}
\newcommand{\gp}{\mathrm{gp}}
\def\Vect{\mathcal{V}\mathrm{ect}{}}
\DeclareFontFamily{U}{russian}{}
\DeclareFontShape{U}{russian}{m}{n}
        { <5><6> wncyr5
        <7><8><9> wncyr7
        <10><10.95><12><14.4><17.28><20.74><24.88> wncyr10 }{}
\DeclareSymbolFont{Russian}{U}{russian}{m}{n}
\DeclareSymbolFontAlphabet{\mathcyr}{Russian}
\let\@math@cyr\mathcyr
\renewcommand{\mathcyr}[1]{\@math@cyr{\cyracc #1}}
\newcommand{\Cor}{\mathrm{Cor}}
\def\fet{\mathrm{f\acute et}}
\def\all{\mathrm{all}}
\def\Span{\mathrm{Span}}
\def\BiSpan{\mathrm{BiSpan}}
\def\FFree{\mathrm{FFree}}
\def\h{\mathrm h}
\def\NAlg{\mathrm{NAlg}}
\renewcommand{\todo}[1]{}
\newcommand{\NB}[1]{}
\newcommand{\NB}[1]{\todo[color=gray!40]{#1}}
\newcommand{\TODO}[1]{\todo[color=red]{#1}}
\title{Motivic stable homotopy theory is strictly commutative at the characteristic}
\date{\today}
\author{Tom Bachmann}
\address{Mathematischen Institut, LMU Munich, Germany}
\email{tom.bachmann@zoho.com}
\begin{document}

\maketitle

\begin{abstract}
We show that mapping spaces in the $p$-local motivic stable category over an $\F_p$-scheme are strictly commutative monoids (whence $H\Z$-modules) in a canonical way.
\end{abstract}

\setcounter{tocdepth}{1}
\tableofcontents

\section{Introduction}
It is well-known that ``motivic invariants at the characteristic'' behave in strange ways.
One famous example is Quillen's computation of the $K$-theory of finite fields \cite{quillen1972cohomology}.
He shows in particular that $K(\F_p)_{(p)} \wequi H\Z_{(p)}$.
If $S$ is any $\F_p$-scheme, then $K(S)$ is a module over $K(\F_p)$, and so $K(S)_{(p)}$ is a module over $H\Z_{(p)}$.
This implies for example that all $k$-invariants of $K(S)_{(p)}$ vanish, in stark contrast with the situation away from the characteristic (i.e. for $K(S)[1/p]$).

In this article we establish the following generalization: if $S$ is any $\F_p$-scheme, and $E \in \SH(S)$ is any motivic spectrum over $S$, then $E(S)_{(p)}$ is an $H\Z_{(p)}$-module.

We begin in \S\ref{sub:intro1} with a rapid recollections of the definitions necessary to state precisely our main result.
The following \S\ref{sub:intro2} contains a more leisurely discussion of background and motivation.
Depending on their taste, the reader may wish to reverse the order of reading these first two subsections.

\subsection{Main result} \label{sub:intro1}
Let $\scr C$ be a stable, presentably symmetric monoidal $\infty$-category.
Then there is a unique cocontinuous symmetric monoidal functor $c: \PSh_\Sigma(\Span(\Fin)) \to \scr C$ (here $\PSh_\Sigma$ denotes the nonabelian derived category  \cite[\S5.5.8]{HTT}, and $\Span(\Fin)$ denotes the $(2,1)$-category of spans in finite sets); it admits a right adjoint $c^*$ by presentability.
Let $X, Y \in \scr C$.
Denoting by $M(X, Y) \in \scr C$ the internal mapping object, we obtain $c^*M(X,Y) \in \PSh_\Sigma(\Span(\Fin))$.
One has $c^*(M(X,Y))(*) \wequi \Map_{\scr C}(X, Y)$.
Recalling that $\PSh_\Sigma(\Span(\Fin))$ is the category of commutative monoids in spaces \cite[Proposition C.1]{norms}, the functor $c$ encodes the fact that mapping spaces in $\scr C$ are commutative monoids in a canonical way.

Observe that $\h\Span(\Fin)$ (the homotopy category of $\Span(\Fin)$) is canonically equivalent to the symmetric monoidal $1$-category $\FFree_\NN$ of finitely generated, free commutative monoids in sets.
Objects of $\PSh_\Sigma(\FFree_\NN)$ are called \emph{strictly commutative monoids}.
The subcategory of grouplike objects identifies with the category $\PSh_\Sigma(\FFree_\Z)$, also known as connective $H\Z$-modules.
The following is our main result.
It says that mapping spaces in the $p$-local motivic stable category of an $\F_p$-scheme are strictly commutative monoids (and even connective $H\Z$-modules) in a canonical way.
\begin{thm}[see Corollary \ref{cor:main-result}] \label{thm:main}
Let $S$ be an $\F_p$-scheme.
The canonical symmetric monoidal functor $\Span(\Fin) \to \SH(S)_{(p)}$ (where $\SH(S)_{(p)} \subset \SH(S)$ denotes the subcategory of $p$-local objects) factors canonically (as a symmetric monoidal functor) through $\FFree_\NN$ (and even $\FFree_\Z$ and $\FFree_{\Z_{(p)}}$).
\end{thm}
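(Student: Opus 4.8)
The plan is to reduce to the universal base $\Spec\F_p$, recast the theorem as a factorization problem for a symmetric monoidal functor, and then isolate the single place where the hypothesis on the characteristic is used — the degeneration of power operations, in the spirit of Quillen's $K(\F_p)_{(p)}\simeq H\Z_{(p)}$. \emph{Reduction:} for the structure map $g\colon S\to\Spec\F_p$ the functor $g^{*}\colon\SH(\F_p)_{(p)}\to\SH(S)_{(p)}$ is cocontinuous and symmetric monoidal, so by the uniqueness recalled in \S\ref{sub:intro1} it sends the canonical functor $\Span(\Fin)\to\SH(\F_p)_{(p)}$ to the canonical functor over $S$. Hence a factorization through $\FFree_\NN$ over $\Spec\F_p$, together with the functoriality of its construction in $g$, yields the stated canonical factorization over every $\F_p$-scheme. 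So I may assume $S=\Spec\F_p$ (one could descend still further, but this suffices).

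\emph{Reformulation and the formal refinements.} Since $\h\Span(\Fin)\simeq\FFree_\NN$, the theorem says exactly that the symmetric monoidal functor $\Span(\Fin)\to\SH(\F_p)_{(p)}$ factors through the projection $\Span(\Fin)\to\h\Span(\Fin)$; by the universal property of $c$ this is the same as factoring $c$ through the induced (strictification) symmetric monoidal functor $\PSh_\Sigma(\Span(\Fin))\to\PSh_\Sigma(\FFree_\NN)$. Granting such a factorization, the stronger clauses are formal: $\SH(\F_p)_{(p)}$ is stable, hence additive, and a symmetric monoidal functor from $\FFree_\NN$ into an additive symmetric monoidal $\infty$-category extends uniquely along $\FFree_\NN\to\FFree_\Z$ (additivity supplies the inverses of morphisms, and on objects the values are forced); since the target is moreover $\Z_{(p)}$-linear, it extends once more along $\FFree_\Z\to\FFree_{\Z_{(p)}}$, which is the assertion about connective $H\Z_{(p)}$-modules. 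So the content is the factorization through $\FFree_\NN$.

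\emph{Where the characteristic enters.} What $\FFree_\NN$ discards, relative to $\Span(\Fin)$, is precisely the symmetric-group data: the mapping spaces $\Map_{\Span(\Fin)}([a],[b])$ are assembled from $\coprod_n B\Sigma_n$, and $\h\Span(\Fin)$ keeps only their sets of components. Concretely, the factorization asks us to trivialize, coherently in $n$ and compatibly with the whole operadic structure, the maps $B\Sigma_n\to\Map_{\SH(\F_p)_{(p)}}(\1,\1)$ (and their companions on $\Map(\1^{\vee a},\1^{\vee b})$) that $c$ induces — the $\Sigma_n$-equivariant transfer/power operations carried by the motivic sphere. This is where I would use that $S$ lies over $\F_p$: $p$-locally these operations degenerate, which is the motivic avatar of the $p$-acyclicity of $\GL_\infty(\F_p)$ behind Quillen's $K(\F_p)_{(p)}\simeq H\Z_{(p)}$. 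I would extract this from the degeneracy of motivic cohomology and of its operations at the characteristic — the (essential) triviality of the mod-$p$ motivic Steenrod algebra over char-$p$ bases, there being in particular no mod-$p$ power operations — and feed the resulting coherent trivialization back into the reformulation to build the factorization.

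\emph{Main obstacle.} The hard part is this last step: turning the degeneration of power operations at the characteristic into a genuinely \emph{coherent} trivialization of all the $\Sigma_n$-data at once — as a statement about spaces and the symmetric monoidal structure, not merely about homotopy groups — and doing so naturally in $S$. By contrast the base change, the passage from $\FFree_\NN$ to $\FFree_\Z$ and $\FFree_{\Z_{(p)}}$, and the upgrade of the factorization to one of symmetric monoidal functors (which I expect to run through the relevant universal properties rather than any explicit computation) are routine.
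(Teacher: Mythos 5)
Your formal reductions --- base change to $\Spec(\F_p)$, the reformulation as factoring through $\h\Span(\Fin)$, and the passage from $\FFree_\NN$ to $\FFree_\Z$ and $\FFree_{\Z_{(p)}}$ via grouplikeness of the stable target and $p$-locality --- all match the paper and are fine. But the core step, the one you yourself flag as the ``main obstacle,'' is genuinely missing, and the mechanism you propose for it is not the right one. The paper does not trivialize the maps $B\Sigma_n \to \Map_{\SH(\F_p)_{(p)}}(\1,\1)$ by any computation with motivic cohomology operations; that mapping space is unknown, and the claimed ``triviality of the mod-$p$ motivic Steenrod algebra in characteristic $p$'' is neither established nor what is used. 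Instead the coherence problem is solved structurally. First, the canonical functor is factored through $\Span(\FEt_S)$ (ambidexterity for finite étale morphisms, Theorem \ref{thm:Spcfet}), and the diagram $\Span(\Fin) \to \Span(\FEt_S) \to \SH(S)_{(p)}$ is promoted to a functor of $\infty$-categories \emph{enriched in} $\PSh(\Sm_S)$. Under this enrichment the mapping object of $\Span(\FEt_S)$ is $q_*q^*L_\Sigma \coprod_{n} B_\et\Sigma_n$ (Lemma \ref{lem:uMap-SpanFet}) --- the \emph{étale} classifying spaces, not the constant ones --- while the mapping presheaves of $\ul{\SH(S)_{(p)}}$ are local for the localization $L$ detected by $\Sigma^\infty_+(\ph)_{(p)}$ (Lemma \ref{lem:SHp-uMap-local}). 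Localizing the enriched mapping presheaves (Lemma \ref{lem:mapping-space-loc}) then produces, coherently and for free, a factorization through a category $L\Span(\FEt_S)$; the entire coherence issue you worry about is absorbed by this enriched-localization formalism.

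Second, the place where the characteristic actually enters is Proposition \ref{prop:Bet}: for an $\F_p$-scheme $S$ and a finite étale group scheme $G$, one has $\Sigma^\infty_+(B_\et G)_{(p)} \wequi \1_{(p)}$. This is proved by Sylow-type transfer arguments reducing to $p$-groups and ultimately to $C_p$, where the Artin--Schreier sequence gives $\A^1$-contractibility --- not by anything about power operations on motivic cohomology. (Note $\Sigma_n$ is not a $p$-group, so even the contractibility statement needs the transfer argument of Proposition \ref{prop:Bet}(3), not just the $p$-group case.) Combined with the $L$-locality of constant sheaves (Lemma \ref{lem:L-cnst}), this shows $L\Span(\FEt_S)$ is a $1$-category, whence the factorization through $\h\Span(\Fin) \wequi \FFree_\NN$. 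So your skeleton is correct, but you are missing both the key geometric input (the $p$-local motivic contractibility of $B_\et G$ over $\F_p$) and the device (factorization through $\Span(\FEt_S)$ plus presheaf-enrichment and $L$-localization of mapping presheaves) that converts it into the coherent trivialization you need.
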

\begin{exm}
We deduce that $K(\F_p)_{(p)} = \Map_{\SH(\F_p)}(\1, \KGL_{(p)})$ is an $H\Z$-module.
This is well-known; in fact $K(\F_p)_{(p)} \wequi H\Z_{(p)}$ by \cite{quillen1972cohomology}.
\end{exm}

\begin{exm}
The category $\DA(S)$ is given by modules over the image of $\Z$ under the canonical functor $\PSh_\Sigma(\Span(\Fin)) \to \SH(S)$.
Consequently if $S$ has characteristic $p$, $\DA(S)_{(p)}$ is given by modules over the image of $H\Z_{(p)} \otimes H\Z_{(p)}$ under $\PSh_\Sigma(\FFree_\Z) \to \SH(S)$.
In particular $\DA(S)_{(p)}$ admits $\SH(S)_{(p)}$ as a (proper) factor.
\end{exm}

One naturally expects such higher structure to have plenty of applications.
Here are two quick ones.
\begin{exm}
The Moore spectrum $\1/p \in \SH(S)$ is an $\scr E_\infty$-ring.
Indeed it is the image of the $\scr E_\infty$-ring $H\Z/p \in \PSh_\Sigma(\FFree_\Z)$.
\end{exm}
\begin{cor}
Let $S$ be an $\F_p$-scheme.
Denote by $\SH_\et^{nh}(S) \subset \SH(S)$ the subcategory of spectra satisfying étale descent, by $\SH_\et^\comp(S)$ the subcategory of spectra satisfying étale hyperdescent, and by $(\ph)_p^\comp$ the subcategory of $p$-complete objects.
Then $\SH_\et^{nh}(S)_p^\comp = * = \SH_\et^{nh}(S)_p^\comp$.
\end{cor}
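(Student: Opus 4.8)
The plan is to derive the statement from Theorem~\ref{thm:main} together with the Artin--Schreier sequence. First I would reduce to a single vanishing. A presentably symmetric monoidal stable $\infty$-category is equivalent to $*$ as soon as its unit vanishes, since then every object $E\simeq\1\otimes E$ vanishes. The unit $\1$ of $\SH_\et^{nh}(S)_p^\comp$, and of $\SH_\et^\comp(S)_p^\comp$, is $p$-complete, so $\1\simeq\lim_n\1/p^n$; an induction along the cofibre sequences $\1/p^{n-1}\to\1/p^n\to\1/p$ then reduces the claim to $\1/p\simeq 0$. Since a hypersheaf is in particular a Čech sheaf, $\SH_\et^\comp(S)$ is a full subcategory of $\SH_\et^{nh}(S)$, so it is enough to work in $\SH_\et^{nh}(S)_p^\comp$.

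Next I would use Theorem~\ref{thm:main} to make everything $H\Z$-linear. The theorem provides a factorization of the canonical symmetric monoidal functor $\Span(\Fin)\to\SH(S)_{(p)}$ through $\FFree_{\Z_{(p)}}$, hence a cocontinuous symmetric monoidal functor $\PSh_\Sigma(\FFree_{\Z_{(p)}})\to\SH(S)_{(p)}$; as $\SH(S)_{(p)}$ is stable this extends along the stabilization $\PSh_\Sigma(\FFree_{\Z_{(p)}})\hookrightarrow\Mod_{H\Z_{(p)}}=D(\Z_{(p)})$ to a cocontinuous symmetric monoidal functor $\Mod_{H\Z_{(p)}}\to\SH(S)_{(p)}$. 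Thus $\SH(S)_{(p)}$ is $H\Z_{(p)}$-linear, a property inherited by every symmetric monoidal localization and every $p$-completion; in particular $\SH_\et^{nh}(S)_p^\comp$ is $H\Z_{(p)}$-linear. Its unit $\1$ is then a module over $H\Z_{(p)}$ (via the structure map $H\Z_{(p)}\otimes\1\to\1$), hence a retract of the free module $H\Z_{(p)}\otimes\1$; tensoring with $\Ss/p$, the object $\1/p$ becomes a retract of $H\Z/p\otimes\1$, i.e.\ of the image of $H\Z/p$ under the canonical symmetric monoidal functor $\Sp\to\SH_\et^{nh}(S)$. So it remains to prove that $H\Z/p\otimes\1\simeq 0$ in $\SH_\et^{nh}(S)$.

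Here the hypothesis that $S$ is an $\F_p$-scheme enters, through Artin--Schreier theory. As an object of $\SH_\et^{nh}(S)$, $H\Z/p\otimes\1$ is the image of the constant étale sheaf $\Z/p$ under the ($\A^1$-localizing, hence exact) $H\Z$-linear realization of étale sheaves of abelian groups on $\Sm/S$. The Artin--Schreier sequence $0\to\Z/p\to\mathbb{G}_a\xrightarrow{\,F-1\,}\mathbb{G}_a\to 0$, with $F$ the absolute Frobenius, is a short exact sequence of étale sheaves of abelian groups over any $\F_p$-scheme, so it exhibits $\Z/p\simeq\fib\!\big(F-1\colon\mathbb{G}_a\to\mathbb{G}_a\big)$; and $\mathbb{G}_a$ is $\A^1$-contractible as a sheaf of abelian groups, the map $\mathbb{G}_a\times\A^1\to\mathbb{G}_a$, $(x,t)\mapsto tx$, being a homotopy through additive endomorphisms from $0$ to the identity. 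Hence the image of $\mathbb{G}_a$, and therefore that of $\Z/p$, vanishes in $\SH_\et^{nh}(S)$, so $H\Z/p\otimes\1\simeq 0$ there. Combined with the previous paragraph this yields $\1/p\simeq 0$, hence $\SH_\et^{nh}(S)_p^\comp=*$, and so also $\SH_\et^\comp(S)_p^\comp=*$.

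The step I expect to be the main obstacle is the last one: carefully identifying $H\Z/p\otimes\1$ with the $\A^1$-localized constant étale sheaf $\Z/p$, and carrying out the Artin--Schreier computation honestly inside $\SH_\et^{nh}(S)$ (in particular checking that the realization functor is exact and monoidal, so that short exact sequences of sheaves produce fibre sequences). The decisive feature — and exactly why étale descent, not merely Nisnevich descent, is required — is that $F-1\colon\mathbb{G}_a\to\mathbb{G}_a$ is an epimorphism of sheaves only after étale sheafification (extracting $p$-th roots is purely inseparable, hence unavailable Nisnevich-locally); thus the exactness of the Artin--Schreier sequence, and with it the vanishing of $\Z/p$, is available only in the étale-local category.
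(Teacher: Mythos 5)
Your argument is correct, but it takes a genuinely different route from the paper. The paper's proof is two lines: it reduces to $S=\Spec(\F_p)$, invokes \cite[Theorem A.1]{bachmann-SHet2} to reduce the vanishing to the identity $\SH_\et^{nh}(\F_p)_p^\comp = \SH_\et^\comp(\F_p)_p^\comp$, and deduces that identity from Theorem \ref{thm:main} together with Clausen--Mathew's result that $\Z$-linear \'etale sheaves on $\Sm_{\F_p}$ are automatically hypercomplete. In other words, the paper uses the main theorem only to upgrade to $\Z$-linearity and hence hypercompleteness, and outsources the actual vanishing of the hypercomplete $p$-complete category to an external reference. You instead prove the vanishing of the (a priori larger) non-hypercomplete category directly: $H\Z_{(p)}$-linearity makes $\1/p$ a retract of the image of $H\Z/p$, and that image dies by Artin--Schreier plus the $\A^1$-contractibility of $\mathbb{G}_a$ as a sheaf of abelian groups; the hypercomplete case then follows by containment (the inclusion going in the opposite direction from the paper's reduction). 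Your version is self-contained modulo Theorem \ref{thm:main} and standard facts, at the cost of essentially re-proving inline the content of the cited Theorem A.1; the paper's version is shorter but leans on two external results. All the individual steps you flag as delicate do go through: the \'etale localization and $p$-completion are monoidal, so linearity descends; the $\A^1$-localization of \'etale sheaves of spectra is an exact localization of a stable category, so it preserves the fibre sequence coming from the Artin--Schreier short exact sequence; and the identification of $H\Z/p\otimes\1$ with the realization of the constant \'etale sheaf follows from the uniqueness of the cocontinuous symmetric monoidal functor out of $\Sp$.
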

\begin{proof}
It suffices to prove that $\SH_\et^{nh}(\F_p)_p^\comp = *$, for which by \cite[Theorem A.1]{bachmann-SHet2} it suffices to prove that $\SH_\et^{nh}(\F_p)_p^\comp = \SH_\et^\comp(\F_p)_p^\comp$.
This follows from Theorem \ref{thm:main}, since $\Z$-linear étale sheaves on $\Sm_{\F_p}$ are hypercomplete \cite[Proposition 2.31]{clausen2019hyperdescent}.
\end{proof}

\begin{rmk}
We have a variant of Theorem \ref{thm:main} (see Corollary \ref{cor:SCR}) showing that if $E \in \NAlg(\SH(S)_{(p)})$ is a $p$-local \emph{normed motivic spectrum} \cite[\S7]{norms}, then $\Map_{\SH(S)}(\1, E)$ is a simplicial commutative ring.
There are also several other variants and complements.
\end{rmk}

\subsection{Background} \label{sub:intro2}
The Hopkins--Morel problem asks if the canonical map of motivic spectra\footnote{Beware that we use the symbol $H\Z$ to denote both the classical and the motivic Eilenberg--MacLane spectrum. Since the two live in different categories, this should not cause confusion.} (arising from the fact that $H\Z$ is orientable with additive formal group law) \[ \MGL/(p, t_1, t_2, \dots) \to H\Z/p \in \SH(S) \] is an equivalence.
By work of Hopkins--Morel--Hoyois \cite{hoyois2015algebraic} and Spitzweck \cite{spitzweck2012commutative}, this is true whenever $p$ is invertible on $S$.
Thus the problem has an affirmative solution in general if and only if the map is an equivalence in $\SH(\F_p)$.

Assuming the equivalence in $\SH(\F_p)$, a slice spectral sequence computation (using \cite{spitzweck2010relations}) together with the work of Geisser--Levine \cite{geisser2000k} shows that\footnote{Where now $E_p^\comp$ also denotes the $p$-completion of an object in a stable $\infty$-category.} \[ \pi_{**}((\MGL_{k})_p^\comp) \wequi K_*^M(k)_p^\comp \otimes L_* \] for any field $k$ of characteristic $p$ (here $K_*^M(k)$ denotes the Milnor $K$-theory of $k$, $K_n^M$ is placed in bidegree $(-n,-n)$, $L_*$ denotes the Lazard ring and $L_{2n}$ is placed in bidegree $(2n,n)$).
From this one deduces that the strongly convergent motivic Adams--Novikov spectral sequence collapses ($p$-adically) over $\F_p$ and hence finds that \[ \pi_{2w-s,w} ((\1_{\F_p})_p^\comp) \wequi \Ext^{s,2w}_{\MU_*\MU}(\MU_*)_p^\comp. \]
In particular \[ \pi_{*,0}((\1_{\F_p})_p^\comp) \wequi \begin{cases} \Z_p & *=0 \\ 0 & \text{else} \end{cases}. \]
Consequently $\map_{\SH(\F_p)_p^\comp}(\1, \1) \wequi H\Z_p$, which immediately implies that $\SH(\F_p)_p^\comp$ is $\Z_p$-linear.

One can deduce many further results from this assumption.
For example it follows that $\Pic(\SH(\F_p)_p^\comp) \in \CMon(\Spc)^\gp$ is $1$-truncated.
Thus given an invertible object $E \in \SH(\F_p)_p^\comp$, there exists a corresponding map $H\Z \xrightarrow{\eta_E} \Pic(\SH(\F_p)_p^\comp)$ if and only if the switch map on $E^{\otimes 2}$ is homotopic to the identity.
We verify in Corollary \ref{cor:HZ-Pic} that $\eta_E$ exists unconditionally in the special case of $E = \Sigma^{2,1} \1_p^\comp$ (and $p \ne 2$).

In other words, an affirmative solution of the Hopkins--Morel problem would imply many surprising results about $\SH(\F_p)_p^\comp$, some of which we establish unconditionally in this article.
I view this as small (further) evidence towards the Hopkins--Morel problem.

\subsection{Why is the main result true?}
In proving Theorem \ref{thm:main}, we exploit the fact that $\SH(\ph)$ is a \emph{functor}, i.e. that we can vary the base scheme.
In fact we do this in two ways.

Firstly, if $f: S' \to S$ is any morphism of schemes, the functor $f^*: \SH(S) \to \SH(S')$ admits a right adjoint $f_*$.
If $f$ is smooth, there is also a left adjoint $f_\sharp$.
If $f: S' = S \amalg S \to S$ is the fold map, then $\SH(S \amalg S) \wequi \SH(S) \times \SH(S)$, the functor $f_*$ is the binary product, and $f_\sharp$ is the binary sum.
Since $\SH(S)$ is semiadditive, there is thus a canonical equivalence $f_\sharp \wequi f_*$ (for this $f$).
This is true more generally if $f$ is a \emph{finite étale morphism}, by a property of $\SH(\ph)$ called \emph{ambidexterity} (see e.g. \cite[Theorem 6.18(1,2)]{hoyois-equivariant}).
Just as semiadditivity yields a canonical functor $\Span(\Fin) \to \SH(S)$, ambidexterity yields a canonical functor $\Span(\FEt_S) \to \SH(S)$ (here $\FEt_S$ denotes the category of finite étale $S$-schemes).

Secondly, if $\scr C: \Sm_S^\op \to \Cat_\infty$ is a functor, then mapping spaces in $\scr C(S)$ are automatically enriched in $\PSh(\Sm_S)$.
Namely given $X, Y \in \scr C(S)$ we obtain \[ \PSh(\Sm_S) \ni \ul\Map(X, Y): S' \mapsto \Map_{\scr C(S')}(X_{S'}, Y_{S'}). \]
For example if $\scr C(\ph) = \Span(\FEt_{(\ph)})$, then $\ul\Map(S,S) \wequi \FEt_{(\ph)}^\wequi$ is the presheaf of groupoids of finite étale schemes.
For $\scr C = \SH(\ph)_{(p)}$, the mapping presheaves are automatically local objects for a certain localization $L$ of $\PSh(\Sm_S)$ (namely the localization with equivalences detected by $\Sigma^\infty_+(\ph)_{(p)}: \PSh(\Sm_S) \to \SH(S)_{(p)}$).
It turns out that one can form a category enriched in $\PSh(\Sm_S)$ by $L$-localizing the mapping presheaves in $\Span(\FEt_{(\ph)})$.
Denote the usual (non-enriched) $\infty$-category obtained from this by taking global sections of the mapping presheaves as $L\Span(\FEt_S)$.
We have thus obtained a factorization \[ \Span(\Fin) \to \Span(\FEt_S) \to L\Span(\FEt_S) \to \SH(S)_{(p)}. \]
The main observation is that if $S$ is an $\F_p$-scheme, then mapping presheaves in $L\Span(\FEt_S)$ can be computed and are in fact discrete; this is a generalization of the folklore result that the étale classifying space of a $p$-group is motivically contractible over $\F_p$ \cite[Remark 4.3.4]{A1-homotopy-theory}.
Thus $\Span(\Fin) \to L\Span(\FEt_S)$ factors through $\h\Span(\Fin)$, proving Theorem \ref{thm:main}.

\subsection{Organization}
We prove the contractibility result for étale classifying spaces alluded to above in \S\ref{sec:etale-class-spaces}.
Then in \S\ref{sec:enriched} we recall basic facts about enriched $\infty$-categories from \cite{gepner2015enriched}.
Next in \S\ref{sec:semiadd} we recall some universality properties of $\Span(\Fin)$ with respect to semiadditive $\infty$-categories, and then we explain some motivic analogs.
In particular we construct the canonical functor $\Span(\FEt_S) \to \SH(S)$ mentioned above.
Finally in \S\ref{sec:main} we prove our main result, following the sketch above.
We establish some complements and variants in \S\ref{sec:complements}; this long section has its own introduction.

\subsection{Notation and conventions}
We use freely the language of $\infty$-categories as established in \cite{HTT,HA}, and also assume basic familiarity with unstable and stable motivic homotopy theory (see e.g. \cite[\S\S2.2, 4.1]{norms}).
Given a stable $\infty$-category $\scr C$, we denote by $\scr C_{(p)}$ the subcategory of $p$-local objects (i.e. those such that the endomorphism of multiplication by $n$ is an equivalence, for every $n$ coprime to $p$).
Given a small $\infty$-category $\scr C$ with finite coproducts we denote by $\PSh_\Sigma(\scr C)$ its nonabelian derived category \cite[\S5.5.8]{HTT}, and by $L_\Sigma: \PSh(\scr C) \to \PSh_\Sigma(\scr C)$ the associated localization.

\subsection{Acknowledgements}
I would like to thank Rune Haugseng for pointing me towards his work on $\infty$-categories enriched in absolute distributors, which significantly simplified Proposition \ref{prop:PSh-enriched}.
I would also like to thank the anonymous referee for their careful reading and several suggestions for improvement.

\section{Étale classifying spaces} \label{sec:etale-class-spaces}
Let $S$ be a scheme and $G$ be a presheaf of groups on $\Sm_S$.
We obtain a presheaf of spaces $BG \in \PSh(\Sm_S)$ and denote by $B_\et G := L_\et BG \in \PSh(\Sm_S)$ its étale localization.
If $G$ is abelian, we similarly have $B^n_\et G \in \PSh(\Sm_S)$ for $n \ge 2$.

The following result is essentially folklore \cite[Remark 4.3.4]{A1-homotopy-theory}.
\begin{prop} \label{prop:Bet}
Let $S$ be an $\F_p$-scheme and $G$ an ind-(finite étale) group scheme over $S$.
\begin{enumerate}
\item If $G$ is a $p$-group, then $L_\mot B_\et G \wequi * \in \Spc(S)$.
\item If $G$ is an abelian $p$-group, then $L_\mot B^n_\et G \wequi * \in \Spc(S)$ also for $n > 1$.
\item Without further assumptions on $G$, the projection map $B_\et G \to *$ becomes an equivalence upon applying the functor $\Sigma^\infty_+(\ph)_{(p)}: \PSh(\Sm_S) \to \SH(S)_{(p)}$.
\end{enumerate}
\end{prop}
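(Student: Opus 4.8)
The plan is to treat (1) and (2) together by an Artin--Schreier dévissage, and then to deduce (3) from (1) by a transfer argument. First some reductions: since $B_\et(\ph)$, $B^n_\et(\ph)$, $\Sigma^\infty_+(\ph)_{(p)}$ and $L_\mot$ all commute with filtered colimits, we may assume $G$ is finite étale. Over an $\F_p$-scheme a finite étale group scheme of $p$-power order is étale-locally a constant finite $p$-group (the infinitesimal $p$-group schemes such as $\mu_p$ being excluded by étaleness), so such a $G$ is nilpotent and admits a finite filtration by normal finite étale $S$-subgroup schemes whose successive subquotients are forms of $(\Z/p)^m$ -- call these $\F_p$-vector group schemes. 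This filtration produces fibre sequences of iterated classifying spaces, so (1) and (2) reduce -- modulo the preservation-of-fibre-sequences point discussed below -- to the assertion that $L_\mot B^n_\et V \wequi *$ for all $n \ge 1$ and all $\F_p$-vector group schemes $V$.

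For $V = \Z/p$ this is the classical Artin--Schreier computation \cite[Remark 4.3.4]{A1-homotopy-theory}: the sequence $0 \to \Z/p \to \mathbb{G}_a \xrightarrow{t \mapsto t^p - t} \mathbb{G}_a \to 0$ of étale sheaves of abelian groups on $\Sm_S$ is short exact, i.e. a (co)fibre sequence of sheaves of connective spectra, so delooping gives fibre sequences $B^{n-1}_\et\mathbb{G}_a \to B^{n-1}_\et\mathbb{G}_a \to B^n_\et\Z/p$ for $n \ge 1$. For a general $\F_p$-vector group scheme $V$ one uses the analogous twisted sequence $0 \to V \to W \xrightarrow{F_W - \mathrm{id}} W \to 0$, where $W$ is the vector group scheme over $S$ obtained from $V$ by étale descent from $\mathbb{G}_a$ and $F_W$ is its $S$-linear Frobenius; this is the only place characteristic $p$ enters. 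Now $W$ is the total space of a vector bundle over $S$, hence $\mathbb{A}^1$-contractible; moreover, since quasi-coherent cohomology does not distinguish the Nisnevich and étale topologies, $B^n_\et W \wequi B^n_\Nis W$, and since $L_\mot$ preserves colimits and finite products it carries the (iterated) bar construction computing $B^n_\Nis W$ to the bar construction on $L_\mot W \wequi *$, which is $*$. Thus $L_\mot B^n_\et W \wequi *$ for all $n \ge 0$. Applying $L_\mot$ to the fibre sequences above, the two outer terms become $*$, so $L_\mot B^n_\et V$ is a connected space with trivial loop space, hence $\wequi *$. Feeding this back into the filtration of $G$ (for (1), using the central filtration so that one still gets fibre sequences $B_\et Z \to B_\et G' \to B_\et(G'/Z)$ with $Z$ central even though $G$ is nonabelian) yields (1) and (2).

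Finally, (3) follows from (1) by a transfer argument: for a finite étale group scheme $G$ over $S$ the index of a Sylow $p$-subgroup is prime to $p$, so by ambidexterity there is a finite étale covering $Y \to B_\et G$ of degree prime to $p$ with $Y$ an étale classifying space of a $p$-group (étale-locally $Y \wequi B_\et\Sigma$ for $\Sigma$ a Sylow subgroup), equipped with a transfer $\tau$ in $\SH(S)$ (part of the $\Span(\FEt_S)$-structure of $\SH(\ph)$) such that $\pi\circ\tau$ is multiplication by that index on $\Sigma^\infty_+ B_\et G$, an equivalence after $(p)$-localization. Hence $\Sigma^\infty_+(B_\et G)_{(p)}$ is a retract of $\Sigma^\infty_+(Y)_{(p)}$; by (1) the latter is $\wequi \1_{(p)}$, and comparing augmentations $\Sigma^\infty_+ X \wequi \1 \oplus \widetilde{\Sigma^\infty}X$ shows the reduced part of $\Sigma^\infty_+(B_\et G)_{(p)}$ is a retract of $0$, whence $\Sigma^\infty_+(B_\et G)_{(p)} \wequi \1_{(p)}$. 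The ind case follows by passing to the colimit.

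The main obstacle is the step used twice above, that $L_\mot$ preserves the relevant fibre sequences: $\mathbb{A}^1$-localization is not left exact in general. The clean way around it is to run the dévissage in sheaves of spectra -- the Artin--Schreier sequences are already (co)fibre sequences of sheaves of connective spectra, and the Nisnevich-local stable $\mathbb{A}^1$-localization is exact, so it preserves them -- and then to transfer the conclusion back to $\Spc(S)$ using that stable $\mathbb{A}^1$-localization is compatible with $\Omega^\infty$ on connective objects (Morel's stable $\mathbb{A}^1$-connectivity theorem). I expect establishing exactly the amount of this compatibility that is needed to be the real technical heart of the argument; the remaining ingredients -- Artin--Schreier theory for étale $\F_p$-group schemes, $\mathbb{A}^1$-contractibility of vector bundles, and the existence of finite étale transfers -- are standard.
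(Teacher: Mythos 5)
Your overall architecture (dévissage to Artin--Schreier for (1)--(2), Sylow transfer for (3)) matches the paper's, and your twisted Artin--Schreier sequence $0 \to V \to W \xrightarrow{F_W - 1} W \to 0$ for forms $V$ of $(\Z/p)^d$ is a legitimate alternative to the paper's reduction to the constant group $C_p$ (the paper instead passes to a cover $S'/P \to S$, for $P$ a Sylow $p$-subgroup of the Galois group of a splitting cover, via a coprime-degree transfer, and then uses that $\F_p$-representations of $p$-groups are iterated extensions of trivial ones). However, the step you yourself flag as ``the real technical heart''---that the localization preserves the relevant fibre sequences---is a genuine gap, and your proposed fix does not close it. Running the dévissage in sheaves of spectra only makes sense for the abelian towers $B^{n-1}_\et W \to B^{n-1}_\et W \to B^n_\et V$; it says nothing about the extension step $1 \to C \to G' \to G'/C \to 1$ when $G'$ is nonabelian, where $B_\et G'$ is not an infinite loop object and the exactness of stable localization is unavailable exactly where you need it. Moreover, transporting a stable vanishing statement back to unstable contractibility via ``Morel's stable $\A^1$-connectivity theorem'' is problematic here: that theorem is a statement over (perfect) fields, whereas the proposition is asserted over an arbitrary $\F_p$-scheme, and even over a field one would still need an unstable Whitehead-type argument to pass from $\Sigma^\infty_+ X_{(p)} \wequi \1_{(p)}$ back to $L_\mot X \wequi *$.

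The paper's device avoids both problems: it works with the sectionwise singular construction $L_{\A^1}$ and uses the \emph{rotated} fibre sequence $B_\et G \to B_\et Q \to B^2_\et C$ (available because $C$ is central), which is preserved by $L_{\A^1}$ because geometric realization preserves fibre sequences whose base is degreewise connected (\cite[Lemma 5.5.6.17]{HA}); contractibility of $L_{\A^1} B_\et Q$ and $L_{\A^1} B^2_\et C$ then forces contractibility of the fibre. You should also tighten two points in (3): a finite étale group scheme over $S$ need not admit a Sylow $p$-subgroup \emph{scheme} globally, so your cover $Y$ is only étale-locally of the form $B_\et \Sigma$ and part (1) does not directly apply to it---the paper first base-changes to the scheme of Sylow subgroups, a finite étale $S$-scheme of degree prime to $p$, so that this base change is conservative on $\SH(\ph)_{(p)}$ by Lemma \ref{lem:coprime-bc-stuff}; and the transfer for $Y \to B_\et G$ is a finite étale map of ind-schemes rather than of objects of $\FEt_S$, so it requires the extension of $\SH(\ph)$ and its transfers to ind-schemes.
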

\begin{proof}
By Zariski descent, we may assume $S$ affine.
By continuity of étale cohomology, we may assume $G$ finite étale, and then (étale classifying spaces being stable under base change, e.g. by \cite[\S4.2]{A1-homotopy-theory}) also $S$ finite type over $\F_p$.\NB{finite étale implies finitely presented}
Thus $S$ is locally connected, and we may assume without loss of generality that $S$ is connected.\NB{Not that it really matters.}
We can replace $\PSh(\Sm_S)$ by $\PSh(\SmAff_S)$ throughout, since the categories become equivalent after (Zariski) localization.

(1,2) We shall prove that $L_{\A^1} B_\et^n G \wequi *$ (where $n=1$ unless $G$ is abelian); here $L_{\A^1}$ denotes the localization at $\A^1$-homotopy equivalences, also known as the \emph{singular construction} \cite[\S 2.3, \S 3.2]{A1-homotopy-theory}.
Suppose that $C \subset G$ is a central subgroup, and consider the exact sequence (of finite étale group schemes, equivalently étale sheaves) $1 \to C \to G \to Q \to 1$.
We claim that $(*)$ it suffices to prove the result for $C$ and $Q$.
Indeed since $C$ is central we have a fiber sequence of sheaves of spaces\todo{ref} $B_\et G \to B_\et Q \to B^2_\et C$ (respectively $B^n_\et G \to B^n_\et Q \to B^{n+1}_\et C$ if $G$ is abelian) which is preserved by $L_{\A^1}$ by \cite[Lemma 5.5.6.17]{HA}, which applies because $B^{n+1}_\et C$ is sectionwise connected \cite[Théorème X.5.1]{sga4}.
Since non-trivial $p$-groups have non-trivial centers \cite[Theorem I.6.5]{lang2002algebra}, finiteness of $G$ and quasi-compactness of $S$ allow an induction using $(*)$ on the size of $G$, which allows us to reduce to the case where $G$ is abelian.
By a similar induction (involving the kernel of multiplication by $p$ on $G$), we may even assume that $G$ is elementary abelian (i.e. locally isomorphic to $(\Z/p)^d$ for some $d$).

Since $G$ is finite étale and $S$ is connected, there exists a finite Galois cover $S' \to S$ over which $G$ becomes constant \cite[Tag 03SF]{stacks-project}, say isomorphic to $(\Z/p)^d$.
Let $S'/S$ have group of automorphisms $K$, another finite group, and let $P \subset K$ be a $p$-Sylow subgroup.
Then $S' \to S'/P$ is Galois with group $P$, and $q: S'/P \to S$ is finite étale of degree coprime to $p$.
Suppose that $L_{\A^1} B^n_\et q^*G \wequi *$.
The transfer in étale cohomology provides a morphism $q_*q^*B^n_\et G \to B^n_\et G$ (e.g. see \cite[Corollary C.13]{norms}), and the composite $B^n_\et G \to q_*q^* B^n_\et G \to B^n_\et G$ of restriction and transfer is an equivalence of étale sheaves (namely locally given by multiplication by the degree of $S'/P \to S$, which is coprime to $p$ by construction).
Since $L_{\A^1}$ commutes with $q_*$, and retracts of final objects are final, it suffices to prove the result over $S'/P$; in other words we may assume that $G$ is split by a Galois cover with group $P$.
The étale group scheme $G$ is determined completely by the action of $P$ on $(\Z/p)^d$, i.e. a $d$-dimensional representation of $P$ over $\F_p$.
Since non-trivial representations of $p$-groups over $\F_p$ are reducible \cite[\S8.3, Corollary of Proposition 26]{serre1977linear}, $G$ can be obtained as an iterated extension of copies of $\Z/p$ with the trivial action, i.e. the finite discrete group $C_p$.

We are thus (using $(*)$ again) reduced to proving that $L_{\A^1} B^n_\et C_p \wequi *$ for $n \ge 1$.
This is an easy consequence of the Artin--Schreier sequence; see \cite[proof of Lemma 5.5]{BEHKSY}.

(3) Let $S'/S$ be the scheme of Sylow $p$-subgroups of $G$.\todo{ref?---also for existence of sylow after pullback}
This is a finite étale scheme of degree the number of Sylow $p$-subgroups (of a geometric fiber of $G$), so in particular coprime to $p$ \cite[Theorem I.6.4(iii)]{lang2002algebra}.
By Lemma \ref{lem:coprime-bc-stuff}(1) below, base change along $S' \to S$ is conservative on $\SH(\ph)_{(p)}$, and hence we may replace $S$ by $S'$.
By construction, $G$ now admits a Sylow $p$-subgroup $P$ (i.e. a $p$-subgroup scheme of index coprime to $p$).
Let $E_{gm} G$ be an ind-smooth free contractible $G$-scheme, so that $B_\et G \wequi (E_{gm}G)/G$ and $B_\et P \wequi (E_{gm} G)/P$ \cite[\S4.2]{A1-homotopy-theory}.
Since $(E_{gm} G)/P \to (E_{gm} G)/G$ is a finite étale morphism, we can use the extension of $\SH(\ph)$ to ind-schemes \cite[\S A]{kleen2018motivic} to construct a transfer map $\Sigma^\infty_+ B_\et G \to \Sigma^\infty_+ B_\et P$ \cite[Definition 2.4, Theorem 2.8, Lemma A.3]{kleen2018motivic}.
It follows from Lemma \ref{lem:coprime-bc-stuff}(2) below and the formal definition of $\SH((E_{gm}G)/G)$ as a limit that the composition of transfer and restriction becomes an equivalence in $\SH(S)_{(p)}$.
By (1), $\Sigma^\infty_+(B_\et P)_{(p)} \wequi \1_{(p)}$, and hence $\Sigma^\infty_+(B_\et G)_{(p)}$ is a summand thereof.
But since $B_\et G$ admits a base point, $\Sigma^\infty_+(B_\et G)_{(p)}$ already splits off $\1_{(p)}$ as a summand, and by what we did the complementary summand is zero.
This concludes the proof.
\end{proof}

Recall that if $X \in \Sm_S$ is such that $\Sigma^\infty_+ X \in \SH(S)$ is strongly dualizable (e.g. $X$ smooth proper \cite[Proposition 2.4.31 and Theorem 2.4.28]{triangulated-mixed-motives}), then one has the \emph{transfer} $\1 \to \Sigma^\infty_+ X \in \SH(S)$; see e.g. \cite[Definition 2.3]{kleen2018motivic}.
Composing the transfer with (the image of) the structure map $X \to S$ we obtain an endomorphism of $\1 \in \SH(S)$ which is called the \emph{Euler characteristic of $X$}.
\begin{lem}\label{lem:coprime-bc-stuff}
Let $S$ be a scheme and $p$ a prime.
Suppose that $p=2$ or no residue field of $S$ is formally real (e.g. $S$ of positive characteristic).
Let $q: S' \to S$ be a finite étale morphism of degree coprime to $p$
\begin{enumerate}
\item The base change $q^*: \SH(S)_{(p)} \to \SH(S')_{(p)}$ is conservative.
\item The Euler characteristic of $S'$ becomes invertible in $\SH(S)_{(p)}$.
\end{enumerate}
\end{lem}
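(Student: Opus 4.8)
The plan is to deduce statement (1) from statement (2), and to prove (2) by reduction to the case $S = \Spec(k)$ for $k$ a field, where it becomes elementary Grothendieck--Witt theory. For the first implication: as $q$ is finite étale it is both smooth and proper, so there are adjunctions $q_\sharp \dashv q^* \dashv q_*$ with $q_\sharp \wequi q_*$ by ambidexterity; in particular $\Sigma^\infty_+ S' \wequi q_\sharp q^* \1 \wequi q_* q^* \1$, and by the definition of the Euler characteristic recalled above, $\chi(S')$ is the composite $\1 \to q_* q^* \1 \wequi q_\sharp q^* \1 \to \1$ of the unit with the (transported) counit. If $\chi(S')$ is invertible in $\SH(S)_{(p)}$, then $\1$ is a retract of $q_* q^* \1$ there, whence by the projection formula $E \wequi E \otimes \1$ is a retract of $E \otimes q_* q^* \1 \wequi q_* q^* E$ for every $E \in \SH(S)_{(p)}$ (note that $q^*$ and $q_*$ preserve $p$-local objects); thus $q^* E \wequi 0$ forces $E \wequi 0$, i.e.\ $q^*$ is conservative.

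\emph{Reducing (2) to the field case.} An endomorphism of $\1 \in \SH(S)_{(p)}$ is invertible precisely when its class in the ring $(\pi_{0,0}\1_S)_{(p)}$ is a unit. I would pass to $S$ affine by Zariski descent, and --- when $p = 2$, where the hypothesis on residue fields is vacuous --- to $S$ Noetherian by continuity of $\SH(-)$, descending the cover $q$ along a presentation of $\mathcal O(S)$ as a filtered union of finitely generated subrings. For Noetherian $S$ the family $\{i_s^* : \SH(S) \to \SH(\kappa(s))\}_{s \in S}$ is conservative --- this follows by Noetherian induction on the closed subsets of $S$, using nil-invariance and continuity of $\SH$ and the localization triangle --- and Euler characteristics are stable under the base changes $i_s$; so one is reduced to $S = \Spec(k)$ with $p = 2$ or with $k$ not formally real. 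When $p$ is odd I would instead argue directly for an arbitrary $S$ all of whose residue fields are non-real: then the real spectrum $S_\ret$ is empty, so by Bachmann's comparison $\SH(S)[\rho^{-1}] \wequi \SH(S_\ret) = *$; since the relation $\langle -1 \rangle = 1 + \eta\rho$ forces $\rho$ to act invertibly on the ``minus part'' of $\SH(S)[1/2]$, that part vanishes, and $\chi(S')$ is detected on the surviving ``plus part'' by its rank.

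\emph{The field case.} Over $k$, the Euler characteristic $\chi(S')$ has rank $d := \deg(S'/S)$ --- it is the class of the trace form $\langle \Tr_{A/k}\rangle$ for $S' = \Spec A$, or one sees this by base change to a geometric point --- and $d$ is prime to $p$. If $p$ is odd: since $k$ is not formally real, $W(k)$ is torsion with $2$-primary torsion (Pfister), so $W(k)_{(p)} = 0$, and the exact sequence $0 \to \Z\cdot h \to \mathrm{GW}(k) \to W(k) \to 0$ identifies $\mathrm{GW}(k)_{(p)}$ with $\Z_{(p)}$ via the rank, sending $\chi(S')$ to the unit $d$. If $p = 2$: the relations $\langle a \rangle^2 = 1$ make $\mathrm{GW}(k)$ integral over $\Z$, and $x^2 - 1 = (x-1)^2$ modulo $2$ shows $\mathrm{GW}(k) \otimes \F_2$ has a single prime; hence $\mathrm{GW}(k)_{(2)}$ is local with maximal ideal the ideal of even-rank forms, and $\chi(S')$, being of odd rank, is a unit.

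I expect the main obstacle to be not the field case but the reduction to it for a genuinely arbitrary base: when $p$ is odd one cannot simply replace $S$ by a Noetherian model without risking the creation of formally real residue fields (e.g.\ $\Spec \QQ_p \rightsquigarrow \Spec \Z[1/p]$), which is why I would route the odd case through the real-étale description instead; verifying carefully that the ``plus part'' of $\SH(S)_{(p)}$ is governed by the rank over an arbitrary base --- rather than only over a field --- is the point I would expect to require the most work.
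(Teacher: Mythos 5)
Your part (1) and your treatment of the field case of (2) match the paper's argument: the paper likewise deduces (1) from (2) via $q_\sharp q^*E \wequi E \otimes \Sigma^\infty_+(S')_{(p)}$, and over a field it reduces to the statement that $GW(k)_{(p)} \to \Z_{(p)}$ detects units, quoting Milnor--Husemoller for $GW(k)_{(p)} \wequi \Z_{(p)}$ when $p$ is odd and $k$ non-real, and a lemma of Bachmann for locality of $GW(k)_{(2)}$ (your integrality argument is a perfectly good self-contained proof of the latter). The divergence is in the reduction to fields, and there your proposal has a gap exactly where you flagged one. First, the worry motivating your detour is unfounded: ``no residue field of $S$ is formally real'' is equivalent to emptiness of the real spectrum of (each affine piece of) $S$, and by the formal Positivstellensatz this is witnessed by a single algebraic identity, hence already holds over some finitely generated subring of $\mathcal{O}(S)$. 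The paper's reduction is therefore uniform in $p$: Zariski-localize, write $S = \lim_i S_i$ with the $S_i$ of finite type over $\Z$ and \emph{still without formally real residue fields}, descend $q$ to some $S_i$, and then invoke conservativity of pullback to residue fields for finite-dimensional schemes (\cite[Theorem B.3]{norms}; this is the Noetherian induction via localization and continuity that you describe).

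By contrast, the real-étale route you propose for odd $p$ does not close as written: after discarding the minus part you still need $\chi(S')$ to be invertible in $[\1,\1]_{\SH(S)[1/2]^+_{(p)}}$, and over a general base this ring is not $\Z_{(p)}$ --- ``governed by the rank'' is essentially the assertion to be proved, and establishing it would again require reducing to residue fields. So keep your field-case computation and part (1) verbatim, but replace the odd-$p$ branch of the reduction by the Noetherian approximation above, noting that non-reality of residue fields passes to a cofinal family of finite-type models.
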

\begin{proof}
(2) The problem is Zariski local on $S$, whence we may assume $S$ affine.
We can write $S = \lim_i S_i$ for schemes $S_i$ of finite type over $\Z$ (without formally real residue fields), and then $S'$ is defined (and finite étale of degree coprime to $p$) already over some $S_i$.
We may thus assume that $S$ has finite dimension.
By \cite[Theorem B.3]{norms} we may further assume that $S$ is the spectrum of a field $k$.

The Euler characteristic is now an element of $GW(k)$ of rank equal to the degree of $q$ (this can be checked after geometric base change, where it is easily verified by direct computation).
It is thus enough to show that $GW(k)_{(p)} \to \Z_{(p)}$ detects units.
If $p$ is odd then the map is an isomorphism by our assumption on residue fields \cite[Theorem III.3.6]{milnor1973symmetric}.
If $p=2$, it suffices to prove that $GW(k)_{(2)}$ is a local ring with residue field $\Z/2$, for which see e.g. \cite[Lemma 2.13]{bachmann-eta}.

(1) For $E \in \SH(S)_{(p)}$ we get $q_\sharp q^* E \wequi E \otimes \Sigma^\infty_+(S')_{(p)}$, which contains $E$ as a summand by (2).
Hence if $q^*E=0$ also $E=0$, as desired.
\end{proof}

\begin{rmk} \label{rmk:generalized-contractibility}
Let $\scr C: \Sch_{\F_p} \to \Cat_\infty$ be a functor and $F: \Spc(\ph) \to \scr C$ be a natural transformation.
Analyzing the proofs in this section, we see that $F(B_\et G) \wequi 0$ for every $G$ and every $S \in \Sch_{\F_p}$ as soon as finite étale schemes become strongly dualizable in $\scr C$, $\scr C$ is additive, satisfies continuity and Zariski descent, and satisfies the analog of Lemma \ref{lem:coprime-bc-stuff}.
The latter will be true if base change to fields is conservative for $\scr C$ on finite type $\Z$-schemes (which itself is a consequence of continuity and localization \cite[Corollary 14]{bachmann-real-etale}) and Euler characteristic of finite étale $k$-algebras of degree coprime to $p$ are invertible in $[\1, \1]_{\scr C(k)}$ (which will hold if this group is $GW(k)_{(p)}$ and Euler characteristics are compatible with ranks).
\end{rmk}

\section{Enriched $\infty$-categories} \label{sec:enriched}
Let $\scr V$ be an $\infty$-category with finite products.
In \cite{gepner2015enriched}, the authors construct a theory of $\scr V$-enriched $\infty$-categories.\footnote{In fact they define this for any monoidal $\infty$-category $\scr V$, but we will not need to additional generality.}
The $\scr V$-enriched $\infty$-categories form an $\infty$-category $\Cat^{\scr V}_\infty$.
If $\scr C$ is a $\scr V$-enriched $\infty$-category, then $\scr C$ has a space of objects $\scr C^\wequi \in \Spc$, and for every pair of objects $X, Y \in \scr C$ there is a mapping object $\ul\Map_\scr{C}(X, Y) \in \scr V$.
There are coherently associative composition maps for these $\scr V$-valued mapping spaces.

The main point for us will be that $\Cat^\scr{V}_\infty$ is functorial in $\scr V$ for functors preserving finite products \cite[Corollary 5.7.6]{gepner2015enriched}.
That is, given a functor $F: \scr V \to \scr V'$ preserving finite products and a $\scr V$-enriched $\infty$-category $\scr C$, one may form a $\scr V'$-enriched $\infty$-category $F\scr C$, with space of objects $(F\scr C)^\wequi$ receiving a surjection from $\scr C^\wequi$ and $\scr V'$-valued mapping spaces $\ul\Map_{F\scr C}(X,Y) \wequi F\ul\Map_{\scr C}(X',Y')$, where $X', Y'$ are lifts of $X,Y$; the composition maps in $F\scr C$ are induced from those in $\scr C$ using that $F$ preserves finite products.
\begin{exm}
We always have a functor $\scr V \to \Spc$, $V \mapsto \Map(*, V)$.
It preserves finite products and so produces for every $\scr V$-enriched $\infty$-category $\scr C$ an $\Spc$-enriched $\infty$-category, i.e. an $\infty$-category in the usual sense.
This is the \emph{underlying $\infty$-category}.
It has the same space of objects as $\scr C$.
\end{exm}
\begin{rmk} \label{rmk:enriched-equiv}
A functor $f: \scr C \to \scr C' \in \Cat^\scr{V}_\infty$ is called \emph{fully faithful} if it induces equivalences $\ul\Map_{\scr C}(X,Y) \wequi \ul\Map_{\scr C'}(fX, fY)$ for all $X,Y \in \scr C$.
It is called \emph{essentially surjective} if the underlying functor of ordinary $\infty$-categories is essentially surjective (i.e. the induced map on spaces of objects is surjective).
Essentially by definition, $f$ is an equivalence if and only if it is fully faithful and essentially surjective.
\end{rmk}

Functoriality of $\Cat^\scr{V}_\infty$ in $\scr V$ already means that given any localization $L: \scr V \to \scr V$ preserving finite products, we can localize the mapping spaces functorially on $\scr V$-enriched $\infty$-categories.
We do not strictly need this, but observe that this localization of mapping spaces even satisfies a universal property.
\begin{lem} \label{lem:mapping-space-loc}
Let $\scr V$ be a presentable $\infty$-category and $L: \scr V \to \scr V$ a (Bousfield) localization preserving finite products.
Denote by $L\Cat^\scr{V}_\infty$ the full subcategory on those $\scr V$-enriched $\infty$-categories where the $\scr V$-valued mapping spaces are $L$-local.
Then the inclusion $L\Cat^\scr{V}_\infty \to \Cat^\scr{V}_\infty$ is right adjoint to the functor $L: \Cat^\scr{V}_\infty \to L\Cat^\scr{V}_\infty$.
\end{lem}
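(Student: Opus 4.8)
The plan is to recognise $L\Cat^{\scr V}_\infty\subseteq\Cat^{\scr V}_\infty$ as a reflective subcategory with reflector $L$. The inclusion $\iota$ is fully faithful, being the inclusion of a full subcategory, and it is a section of $L$: if $\scr C\in L\Cat^{\scr V}_\infty$ then all mapping objects of $\scr C$ are $L$-local, so applying $L$ leaves them unchanged, and the resulting Segal object --- being that of $\scr C$, hence already complete --- is not recompleted; thus $L\iota\wequi\id$. By the standard characterisation of reflective subcategories \cite[\S 5.2.7]{HTT} it therefore suffices to exhibit, for each $\scr C\in\Cat^{\scr V}_\infty$, a morphism $\eta_\scr{C}\colon\scr C\to L\scr C$ that is a reflection of $\scr C$ into $L\Cat^{\scr V}_\infty$, i.e.\ such that for every $\scr D\in L\Cat^{\scr V}_\infty$ precomposition with $\eta_\scr{C}$ induces an equivalence $\Map_{\Cat^{\scr V}_\infty}(L\scr C,\scr D)\xrightarrow{\ \sim\ }\Map_{\Cat^{\scr V}_\infty}(\scr C,\scr D)$; the given functor $L$ is then automatically (equivalent to) the resulting left adjoint.

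The morphism $\eta_\scr{C}$ is induced by the unit $u\colon\id_{\scr V}\to L$ of the Bousfield localization via the functoriality of \cite[Corollary 5.7.6]{gepner2015enriched} (which on mapping objects is given by applying the relevant functor, and is compatible with natural transformations between finite-product-preserving functors). Concretely $\eta_\scr{C}$ induces on mapping objects the localization maps $\ul\Map_\scr{C}(X,Y)\to L\,\ul\Map_\scr{C}(X,Y)\wequi\ul\Map_{L\scr C}(\eta X,\eta Y)$ and is essentially surjective on objects (cf.\ Remark \ref{rmk:enriched-equiv}). One may also package this more symmetrically: since $L$ preserves finite products, the full subcategory $\scr V_L$ of $L$-local objects is reflective in $\scr V$ and closed under finite products, so both functors of the localization adjunction $\scr V\rightleftarrows\scr V_L$ preserve finite products; applying \cite[Corollary 5.7.6]{gepner2015enriched}, and using that the fully faithful $\scr V_L\hookrightarrow\scr V$ induces a fully faithful $\Cat^{\scr V_L}_\infty\hookrightarrow\Cat^{\scr V}_\infty$ with essential image exactly $L\Cat^{\scr V}_\infty$, one recovers the adjunction $L\dashv\iota$ directly.

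I expect the verification that $\eta_\scr{C}$ is a reflection to be the only real obstacle; it amounts to a statement internal to $\scr V$ and should require no new idea. In the model of \cite{gepner2015enriched} the space $\Map_{\Cat^{\scr V}_\infty}(\scr C,\scr D)$ is a limit of a diagram assembled from the object space of $\scr D$ and from mapping spaces of the form $\Map_{\scr V}\bigl(\ul\Map_\scr{C}(X_0,X_1),\,\ul\Map_\scr{D}(-,-)\bigr)$ together with finite products thereof, in which the mapping objects of $\scr C$ appear only in the source variable. Since every mapping object of $\scr D$ is $L$-local, each such mapping space is unchanged when $\ul\Map_\scr{C}(X_0,X_1)$ is replaced by $L\,\ul\Map_\scr{C}(X_0,X_1)=\ul\Map_{L\scr C}(X_0,X_1)$, compatibly with every structure map of the diagram; together with essential surjectivity of $\eta_\scr{C}$ on objects this yields the claimed equivalence of limits. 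Said concretely, a $\scr V$-functor $\scr C\to\scr D$ factors through $\eta_\scr{C}$ essentially uniquely: the factorisation of each mapping-object component is forced by $L$-locality of its target, and then extends automatically over the (a priori larger) object and automorphism spaces of $L\scr C$, again by $L$-locality of the mapping objects of $\scr D$. The only delicate point is the bookkeeping with the explicit model for functor $\infty$-categories in \cite{gepner2015enriched}.
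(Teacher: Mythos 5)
Your proposal is correct, and the ``more symmetric packaging'' you sketch at the end is essentially the paper's proof: the paper writes the Bousfield localization as an adjunction $L'\colon \scr V \rightleftarrows L\scr V \colon \iota$ onto the subcategory of local objects and invokes \cite[Proposition 5.7.17]{gepner2015enriched} to obtain an induced adjunction $L'_*\colon \Cat^{\scr V}_\infty \rightleftarrows \Cat^{L\scr V}_\infty \colon \iota_*$, then checks that the counit is fully faithful and essentially surjective (hence an equivalence by Remark \ref{rmk:enriched-equiv}), so that $L'_*$ is a Bousfield localization whose local objects are exactly the enriched categories with $L$-local mapping objects. Note that \cite[Corollary 5.7.6]{gepner2015enriched} by itself only supplies the two induced functors; the fact that they assemble into an adjunction is precisely Proposition 5.7.17, which is the citation you are missing. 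Your primary route --- directly verifying that $\eta_{\scr C}\colon \scr C \to L\scr C$ is a reflection by unwinding the limit description of $\Map_{\Cat^{\scr V}_\infty}(\scr C,\scr D)$ --- is also viable and is morally the content hidden in the paper's appeal to ``the explicit description of $L'_*$ and $\iota_*$''; its cost is exactly the bookkeeping you flag: you must (a) produce $\eta_{\scr C}$ from the unit $\id_{\scr V}\to L$, which is cleanest as the unit of the induced adjunction rather than via an unproved $2$-functoriality of $\Cat^{(-)}_\infty$ on natural transformations, and (b) handle the fact that $L\scr C$ is obtained by Segal-completing, so its space of objects only receives a surjection from that of $\scr C$. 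Your remark that maps into a complete target extend essentially uniquely along fully faithful, essentially surjective functors is the right fix for (b), but it is itself a nontrivial input from \cite{gepner2015enriched} (completion is localization at such functors). The adjunction route buys all of this for the price of one citation.
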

\begin{proof}
Write $L': V \adj LV: \iota$ for the adjunction.
By \cite[Proposition 5.7.17]{gepner2015enriched}, there is an induced adjunction $L'_*: \Cat^\scr{V}_\infty \adj \Cat^{L\scr V}_\infty: \iota_*$.
It follows from the explicit description of $L'_*$ and $\iota_*$ that for $\scr C \in \Cat^{L\scr V}_\infty$ the co-unit $L'_* \iota_* \scr C \to \scr C$ is essentially surjective and fully faithful, whence an equivalence by Remark \ref{rmk:enriched-equiv}.
It follows that $L'_*$ is a Bousfield localization, and again the explicit description of $L'_*, \iota_*$ shows that the local objects are precisely the categories with $L$-local $\scr V$-valued mapping spaces.
\end{proof}

We now exhibit a convenient way of constructing $\infty$-categories enriched in presheaves.
Let $\scr W$ be a small $\infty$-category with a final object $*$.
We shall show that given $\scr C: \scr W^\op \to \Cat_\infty$ we can form a $\PSh(\scr W)$-enriched $\infty$-category $\ul{\scr C}$ with underlying $\infty$-category $\scr C(*)$ and mapping presheaves given by \begin{equation} \label{eq:RC-maps} \PSh(\scr W) \ni \ul\Map_{\ul{\scr C}}(X,Y): w \mapsto \Map_{\scr C(w)}(X_w,Y_w). \end{equation}
%To do this, we need to assume more familiarity with \cite{gepner2015enriched}, and we shall employ their notation without further comment.

Since $\scr W$ has a final object, the full subcategory of $\PSh(\scr W)$ on constant presheaves is equivalent to $\Spc$ (via evaluation at $*$).\NB{One argument: $\PSh_{cnst}(\scr W) \wequi \PSh(|\scr W|) \wequi \PSh(*) \wequi \Spc$}
\def\Seg{\mathrm{Seg}}
\def\cat{\mathrm{cat}}
Denote by $\Seg(\PSh(W)) \subset \PSh(\Delta \times \scr W)$ the full subcategory on those objects whose restriction to $\Delta \times \{c\}$ is a Segal space for every $c \in \scr W$, and whose restriction to $\{\Delta^0\} \times \scr W$ is constant.

We have so far avoided giving an actual definition of $\Cat_\infty^{\scr V}$.
For the next result, we need to get a little bit more technical: $\Cat_\infty^{\scr V}$ is in fact obtained as a localization of $\Alg_\cat(\scr V)$, the category of ``categorical algebras in $\scr V$'' \cite[Definition 4.3.1]{gepner2015enriched}.
\begin{prop} \label{prop:PSh-enriched}
Let $\scr W$ be a small $\infty$-category with a final object $*$.
There is an equivalence \[ \Alg_\cat(\PSh(\scr W)) \wequi \Seg(\PSh(\scr W)) \] such that the full subcategory $\Cat^{\PSh(\scr W)}_\infty \subset \Alg_\cat(\PSh(\scr W))$ corresponds to the full subcategory of those objects $E_\bullet \in \Seg(\PSh(\scr W))$ such that $E_\bullet(*) \in \Seg(\Spc)$ is complete.
\end{prop}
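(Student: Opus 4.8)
\emph{Proof strategy.}
The plan is to combine the description, due to Gepner--Haugseng \cite{gepner2015enriched}, of $\Alg_\cat(\scr V)$ for $\scr V$ with finite products in terms of Segal objects with the elementary features of the presheaf category $\PSh(\scr W)$; reconciling the two in the presheaf case is exactly what Haugseng's account of enrichment in presheaf $\infty$-categories makes transparent. Recall from \cite{gepner2015enriched} that, for presentable $\scr V$ with finite products, a categorical algebra in $\scr V$ consists of a \emph{space} of objects together with $\scr V$-valued mapping objects and coherent composition, and that $\Alg_\cat(\scr V)$ is modeled by the full subcategory of $\Fun(\Delta^\op, \scr V)$ spanned by the Segal objects $E_\bullet$ (those with $E_n \wequi E_1 \times_{E_0} \cdots \times_{E_0} E_1$) whose degree-zero part $E_0$ lies in the essential image of the left adjoint $(-) \otimes * \colon \Spc \to \scr V$ of $\Map_{\scr V}(*, -)$. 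In this model $\Cat^{\scr V}_\infty \subset \Alg_\cat(\scr V)$ is the full subcategory of the \emph{complete} objects, i.e. those for which the underlying simplicial space $\Map_{\scr V}(*, E_\bullet)$ is a complete Segal space (the case $\scr V = \Spc$ being the equivalences $\Alg_\cat(\Spc) \wequi \Seg(\Spc)$ and $\Cat^{\Spc}_\infty \wequi \Cat_\infty$): indeed the underlying $\infty$-category is obtained by applying $\Map_{\scr V}(*, -)$ degreewise, as recalled above, and since a morphism of a $\scr V$-enriched $\infty$-category is an equivalence exactly when its image in the underlying $\infty$-category is, the completeness condition is detected there.

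First I would specialize to $\scr V = \PSh(\scr W) = \Fun(\scr W^\op, \Spc)$. Limits in $\PSh(\scr W)$ are computed objectwise, so under the identification $\Fun(\Delta^\op, \PSh(\scr W)) \wequi \PSh(\Delta \times \scr W)$ a Segal object corresponds to an $E \in \PSh(\Delta \times \scr W)$ whose restriction to $\Delta \times \{c\}$ is a Segal space for every $c \in \scr W$. Moreover $X \otimes * = \colim_X *$ is the constant presheaf with value $X$, so the essential image of $(-) \otimes *$ is precisely the subcategory of constant presheaves; the condition on $E_0$ therefore becomes the requirement that the restriction of $E$ to $\{\Delta^0\} \times \scr W$ be constant. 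This gives the equivalence $\Alg_\cat(\PSh(\scr W)) \wequi \Seg(\PSh(\scr W))$, which is the first assertion.

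It remains to pin down the completeness condition. Since $\scr W$ has a final object $*$, the presheaf represented by $*$ is the terminal object of $\PSh(\scr W)$, so Yoneda gives $\Map_{\PSh(\scr W)}(*, F) \wequi F(*)$ for all $F \in \PSh(\scr W)$; that is, the functor $\Map_{\PSh(\scr W)}(*, -)$ is evaluation at $*$. Applying this degreewise, the underlying simplicial space of $E \in \Seg(\PSh(\scr W))$ is $E(*) \in \Seg(\Spc)$. Combining with the previous paragraph, $E$ corresponds to an object of $\Cat^{\PSh(\scr W)}_\infty$ precisely when $E(*)$ is a complete Segal space, which is the second assertion.

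The only genuinely technical step is the first paragraph: matching the operadic definition of $\Alg_\cat(\scr V)$ from \cite[Definition 4.3.1]{gepner2015enriched}, which is phrased via non-symmetric $\infty$-operads with a varying $\infty$-groupoid of colours, with the concrete \emph{Segal object with discrete $E_0$} model, and checking that the resulting equivalence is natural along product-preserving functors $\scr V \to \scr V'$; this naturality is what lets one pass between $\PSh(\scr W)$ and $\Spc$ along the evaluation functors $\mathrm{ev}_c$. For a general cartesian $\scr V$ this takes some care with the colour objects; the purpose of invoking Haugseng's treatment of enrichment in a presheaf $\infty$-category is to supply the equivalence $\Alg_\cat(\PSh(\scr W)) \wequi \Seg(\PSh(\scr W))$ and its compatibility with evaluation directly, leaving only the elementary manipulations of the middle two paragraphs.
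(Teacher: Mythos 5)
Your proposal follows essentially the same route as the paper: both reduce the statement to Haugseng's identification of $\Alg_\cat(\scr V)$ with Segal objects having ``constant'' degree-zero part, and your middle paragraphs correctly unpack the two elementary specializations (the essential image of $\Spc \to \PSh(\scr W)$ is the constant presheaves, and $\Map_{\PSh(\scr W)}(*,-)$ is evaluation at the final object, which identifies the underlying Segal space with $E_\bullet(*)$). The only imprecision is your opening claim that the Segal-object model of $\Alg_\cat(\scr V)$ holds for every presentable cartesian $\scr V$ --- it requires $\scr V$ to be an absolute distributor in Haugseng's sense, which the paper verifies for $\PSh(\scr W)$ using that it is an $\infty$-topos --- but since you ultimately defer to Haugseng's presheaf-specific treatment for exactly this point, the argument stands.
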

\begin{proof}
This is a special case of \cite[Definition 7.4, Theorem 7.5, Definition 7.13 and Proposition 7.16]{haugseng2015rectification} once we observe that $\PSh(W)$ is an \emph{absolute distributor} \cite[Definition 7.2]{haugseng2015rectification}.
This follows from the fact that $\PSh(W)$ is an $\infty$-topos (and $W$ is weakly contractible), using \cite[Theorems 6.1.0.6 and 6.1.3.9]{HTT}.
\end{proof}

\begin{cnstr} \label{cnstr:enriched}
Let $\scr W$ be a small $\infty$-category with a final object $*$.
Write $e: \Delta \wequi * \times \Delta \to \scr W \times \Delta$ and $f: \scr W \wequi \scr W \times \{\Delta^0\} \to \scr W \times \Delta$ for the inclusions, $e^*, f^*$ for the restriction functors induced on presheaf categories, and $e_!, f_*$ for the left/right adjoints.
Consider the functor \[ R: \Fun(\scr W^\op, \Cat_\infty) \hookrightarrow \PSh(\scr W \times \Delta) \xrightarrow{R'} \PSh(\scr W \times \Delta), \] where \[ R'F = e_!e^*f_*f^*F \times_{f_*f^* F} F. \]
\end{cnstr}

\begin{rmk} \label{rmk:explain-R}
Write $c: \Seg(\Spc) \to \Seg(\Spc)$ for the composite $g_*g^*$, where $g: \{\Delta^0\} \to \Delta$.
Then $c$ is the functor making the mapping spaces in a Segal space discrete, while retaining the space of objects.
One verifies easily that $(f_*f^*F)(w) \wequi c(F(w))$, and $e_!e^*f_*f^*F$ is the constant presheaf (of Segal spaces) on $c(F(*))$.
Thus informally speaking, the functor $R$ takes a presheaf of categories (i.e. complete Segal spaces) $F$ and constructs a presheaf of Segal spaces $RF$, all of which have the same space of objects (namely $F(*)^\wequi$), but the mapping spaces in $(RF)(w)$ are those of $F(w)$.
\end{rmk}

\begin{cor} \label{cor:PSh-enriched}
Let $\scr W$ be a small $\infty$-category with a final object $*$ and $\scr C \in \Fun(W^\op, \Cat_\infty)$.
Then \[ R\scr C \in \Cat^{\PSh(\scr W)}_\infty \subset \Alg_\cat(\PSh(\scr W)) \wequi \Seg(\PSh(\scr W)) \subset \PSh(\scr W \times \Delta). \]
The objects of $R\scr C$ are the objects of $\scr C(*)$, and given $X, Y \in \scr C(*)$ the mapping presheaf $\ul\Map_{R\scr C}(X, Y) \in \PSh(\scr W)$ is given by \eqref{eq:RC-maps}.
\end{cor}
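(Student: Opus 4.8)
The plan is to reduce everything to Proposition~\ref{prop:PSh-enriched}, which identifies $\Cat^{\PSh(\scr W)}_\infty$ with the full subcategory of $\Seg(\PSh(\scr W)) \subset \PSh(\scr W\times\Delta)$ on those $E$ with $E(*) \in \Seg(\Spc)$ complete; so I would check these two conditions for $R\scr C$ and then read off the objects and the mapping presheaves. The starting point is Remark~\ref{rmk:explain-R}: it records that $f_*f^*\scr C$ is the presheaf $w \mapsto c(\scr C(w))$ and that $e_!e^*f_*f^*\scr C$ is the constant presheaf on $c(\scr C(*))$, where $c$ replaces the mapping spaces of a Segal space by discrete ones, i.e.\ sends $Z$ to the codiscrete nerve of $Z_0$ (so $c(Z)_n \simeq Z_0^{\times(n+1)}$). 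Since limits in $\PSh(\scr W\times\Delta)$ are computed pointwise, this gives for each $w \in \scr W$ an identification
\[ (R\scr C)(w) \;\simeq\; c(\scr C(*)) \times_{c(\scr C(w))} \scr C(w) \;\in\; \PSh(\Delta), \]
where the left-hand map is induced by the structure morphism $w \to *$ and the right-hand one is the canonical map $\scr C(w) \to c(\scr C(w))$.

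Given this, the first step is to verify the two conditions defining $\Seg(\PSh(\scr W))$. For the Segal condition at a fixed $w$: the three terms $c(\scr C(*))$, $c(\scr C(w))$, $\scr C(w)$ are Segal spaces (a complete Segal space and two codiscrete nerves), and being a Segal space is preserved under limits of simplicial spaces (limits commute with the iterated fibre products occurring in the Segal condition), so the pullback $(R\scr C)(w)$ is a Segal space. For the condition that $R\scr C$ restricted to $\{\Delta^0\}\times\scr W$ be constant: evaluating the pullback in simplicial degree $0$ and using that $\scr C(w) \to c(\scr C(w))$ is an equivalence on $0$-simplices gives $(R\scr C)(w)_0 \simeq \scr C(*)_0 \times_{\scr C(w)_0} \scr C(w)_0 \simeq \scr C(*)_0$, naturally in $w$. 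Hence $R\scr C \in \Seg(\PSh(\scr W))$.

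The second step is completeness at $*$. Because $*$ is final in $\scr W$, the functor $e_!$ takes a presheaf on $\Delta$ to the presheaf on $\scr W\times\Delta$ that is constant in the $\scr W$-direction; in particular $e^*e_! \simeq \id$, so $e_!$ is fully faithful and, by the triangle identity, the counit $e_!e^*f_*f^*\scr C \to f_*f^*\scr C$ becomes an equivalence after applying $e^*$, i.e.\ it is the identity of $c(\scr C(*))$ at $w = *$. Thus the displayed pullback degenerates at $w = *$ to $(R\scr C)(*) \simeq \scr C(*)$, which is complete since $\scr C(*) \in \Cat_\infty$. By Proposition~\ref{prop:PSh-enriched} this proves $R\scr C \in \Cat^{\PSh(\scr W)}_\infty$, with underlying $\infty$-category $(R\scr C)(*) \simeq \scr C(*)$; in particular its objects are the objects of $\scr C(*)$.

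For the mapping presheaves I would use that in a Segal space $E_\bullet$ the mapping space from $X$ to $Y$ is the fibre of $E_1 \to E_0 \times E_0$ over $(X,Y)$. Computing pointwise in $w$ from $(R\scr C)(w)_1 \simeq (\scr C(*)_0 \times \scr C(*)_0) \times_{\scr C(w)_0 \times \scr C(w)_0} \scr C(w)_1$, the fibre over $(X,Y) \in (R\scr C)(w)_0 \times (R\scr C)(w)_0 = \scr C(*)_0 \times \scr C(*)_0$ is the fibre of $\scr C(w)_1 \to \scr C(w)_0 \times \scr C(w)_0$ over $(X_w, Y_w)$, i.e.\ $\Map_{\scr C(w)}(X_w, Y_w)$, where $X_w, Y_w$ are the images of $X, Y$ under the functor $\scr C(*) \to \scr C(w)$ induced by $w \to *$; tracking the functoriality in $w$ recovers \eqref{eq:RC-maps}. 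The step I expect to demand the most care is the bookkeeping underlying Remark~\ref{rmk:explain-R}, above all the assertion that $e_!$ produces presheaves constant in the $\scr W$-direction: this is the only point at which the hypothesis that $\scr W$ has a final object is used, and it is precisely what makes the pullback collapse correctly at $*$. Once that is granted, the remaining verifications are formal manipulations of limits and fibres.
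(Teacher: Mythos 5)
Your proposal is correct and follows essentially the same route as the paper: the paper's proof simply invokes Remark \ref{rmk:explain-R} (the identification $(R\scr C)(w) \simeq c(\scr C(*)) \times_{c(\scr C(w))} \scr C(w)$ with $c$ the codiscrete-nerve functor) and reads off the Segal condition, constancy in degree $0$, completeness at $*$, and the mapping presheaves, exactly as you do. Your write-up just supplies the levelwise pullback computations and the fully-faithfulness of $e_!$ that the paper leaves implicit.
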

\begin{proof}
Let $F \in \Fun(\scr W^\op, \Seg(\Spc)) \subset \PSh(\scr W \times \Delta)$.
Remark \ref{rmk:explain-R} explains why $RF$ is a presheaf of Segal spaces, and what the spaces of objects and mapping spaces are.
The first claim follows from this since the Remark shows that $RF_0$ is constant and $RF(*) \wequi F(*)$ is complete.
The second claim is just a reformulation of Remark \ref{rmk:explain-R}.
\end{proof}

Now that we know $R\scr C$ is an $\infty$-category enriched in $\PSh(\scr W)$, we also denote it by $\ul{\scr C}$.
\begin{exm}
Let $\scr C \in \Cat_\infty$ and denote by $e\scr C \in \Fun(\scr W^\op, \Cat_\infty)$ the associated constant presheaf.
We obtain $\ul{e\scr C} \in \Cat^{\PSh(\scr W)}_\infty$, which has underlying $\infty$-category $\scr C$ and mapping presheaves the constant presheaves associated with the mapping spaces in $\scr C$.
\end{exm}

\def\surj{\mathrm{surj}}
\begin{rmk}
Denote by $\Fun(\scr W^\op, \Cat_\infty)_\surj \subset \Fun(\scr W^\op, \Cat_\infty)$ the full subcategory on those presheaves of categories all of whose transition maps are essentially surjective.
One may show that the restriction of $R$ to $\Fun(\scr W^\op, \Cat_\infty)_\surj$ is fully faithful and admits a left adjoint given by levelwise Segal completion.
From this one may deduce that $R$ induces an equivalence \[ \Fun(\scr W^\op, \Cat_\infty)_\surj \wequi \Cat^{\PSh(\scr W)}_\infty. \]
We will not need this more precise statement.
\end{rmk}

\section{Semiadditivity and ambidexterity} \label{sec:semiadd}
\subsection{Review of semiadditivity}
Recall that an $\infty$-category $\scr C$ is called \emph{semiadditive} if it is pointed (i.e. admits an object which is both initial and final) and admits finite biproducts (equivalently admits binary coproducts, and for $X, Y \in \scr C$ the span $X \amalg 0 \leftarrow X \amalg Y \to 0 \amalg Y$ is a limit diagram).
The $\infty$-category $\Cat_\infty^\amalg$ of (small) $\infty$-categories with finite coproducts, and functors preserving the finite coproducts, is presentable.
It has a canonical symmetric monoidal structure such that the left adjoint of the forgetful functor $\Cat_\infty^\amalg \to \Cat_\infty$ is symmetric monoidal \cite[Corollary 4.8.4.1]{HA}.
Let $\Cat_\infty^\oplus \subset \Cat_\infty^\amalg$ be the full subcategory on the semiadditive $\infty$-categories.
By \cite[\S5.1]{harpaz2020ambidexterity} we know that $\Span(\Fin) \in \Cat_\infty^\amalg$ is an idempotent monoid, and $\Cat_\infty^\oplus$ is its category of modules.
This immediately implies the following.
\begin{prop}[Harpaz] \label{prop:semiadd-basics}
The $\infty$-category $\Cat_\infty^\oplus$ of small, semiadditive $\infty$-categories (and additive functors) is presentable and admits a canonical symmetric monoidal structure such that the left adjoint of the forgetful functor $\Cat_\infty^\oplus \to \Cat_\infty$ is symmetric monoidal.
Moreover $\Span(\Fin)$ is the free semiadditive $\infty$-category on one generator (i.e. the result of applying the left adjoint to $* \in \Cat_\infty$).
\end{prop}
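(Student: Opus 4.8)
The plan is to obtain the proposition as a purely formal consequence of the two facts already quoted: that $\Cat_\infty^\amalg$ is presentably symmetric monoidal with the free functor $\Cat_\infty \to \Cat_\infty^\amalg$ symmetric monoidal (\cite[Corollary 4.8.4.1]{HA}), and that $\Span(\Fin)$ is an idempotent object of $\Cat_\infty^\amalg$ whose category of modules is $\Cat_\infty^\oplus$ (\cite[\S5.1]{harpaz2020ambidexterity}). Nothing further is needed.

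First I would recall the general theory of idempotent algebras (\cite[\S4.8.2]{HA}). If $A$ is an idempotent object of a presentable symmetric monoidal $\infty$-category $\scr D$, then $A$ carries an essentially unique commutative algebra structure, the forgetful functor $\Mod_A(\scr D) \to \scr D$ is fully faithful with essential image the full subcategory of objects $X$ for which the canonical map $X \to A \otimes X$ is an equivalence, and this inclusion is a smashing Bousfield localization with localization functor $L_A = A \otimes (-)$. Consequently $\Mod_A(\scr D)$ is a presentable symmetric monoidal $\infty$-category --- being a reflective accessible subcategory of the presentable category $\scr D$ (\cite{HTT}), with tensor unit $A = L_A(\1_{\scr D})$ --- and $L_A$ is symmetric monoidal and left adjoint to the inclusion $\Mod_A(\scr D) \hookrightarrow \scr D$.

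Applying this with $\scr D = \Cat_\infty^\amalg$ and $A = \Span(\Fin)$ shows immediately that $\Cat_\infty^\oplus = \Mod_{\Span(\Fin)}(\Cat_\infty^\amalg)$ is presentable and admits a canonical symmetric monoidal structure, and that $\Span(\Fin) \otimes (-): \Cat_\infty^\amalg \to \Cat_\infty^\oplus$ is symmetric monoidal and left adjoint to the inclusion $\Cat_\infty^\oplus \hookrightarrow \Cat_\infty^\amalg$. The forgetful functor $\Cat_\infty^\oplus \to \Cat_\infty$ is the composite of this inclusion with the forgetful functor $\Cat_\infty^\amalg \to \Cat_\infty$, so its left adjoint is the composite of the two left adjoints $\Cat_\infty \to \Cat_\infty^\amalg \to \Cat_\infty^\oplus$, hence symmetric monoidal as a composite of symmetric monoidal functors.

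It remains to evaluate this composite left adjoint on $* \in \Cat_\infty$. The free $\infty$-category with finite coproducts on one object is $\Fin$ --- its objects are the finite copowers of the generator and its mapping spaces are the (discrete) sets of maps of finite sets --- and $\Fin$ is precisely the tensor unit of $\Cat_\infty^\amalg$, its universal property $\Fun^\amalg(\Fin,\scr E) \simeq \scr E$ being that of the monoidal unit. Hence the composite sends $* \mapsto \Fin \mapsto \Span(\Fin) \otimes \Fin \wequi \Span(\Fin)$, which is the desired identification. The argument is entirely bookkeeping around the quoted inputs; the only point requiring care is to invoke the idempotent-algebra formalism in its presentable symmetric monoidal form and to keep track of the two adjunctions and of the identification of $\Fin$ as the unit of $\Cat_\infty^\amalg$, and I do not anticipate any genuine technical obstacle.
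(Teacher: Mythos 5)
Your argument is correct and is exactly the route the paper takes: the paper cites Harpaz for $\Span(\Fin)$ being an idempotent monoid in $\Cat_\infty^\amalg$ with $\Cat_\infty^\oplus$ as its module category, and declares that this ``immediately implies'' the proposition; your write-up simply supplies the idempotent-algebra bookkeeping (smashing localization, presentability of the module category, $\Fin$ as the unit) that the paper leaves implicit. No gaps.
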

In particular if $\scr C$ is semiadditive and $X \in \scr C$, then there is a unique functor $\Span(\Fin) \to \scr C$ preserving finite coproducts and sending $*$ to $X$.
Similarly, if $\scr C$ has a symmetric monoidal structure preserving finite coproducts in each variable, then there is a unique symmetric monoidal functor $\Span(\Fin) \to \scr C$ preserving finite coproducts (indeed essentially by definition $\scr C$ promotes to an object of $\CAlg(\Cat_\infty^\oplus)$, and $\Span(\Fin)$ is the initial object of this category, being the unit of $\Cat_\infty^\oplus$).

\subsection{Motivic spaces with finite étale transfers}
One expects that a similar story plays out for motivic categories (i.e. appropriate presheaves of categories on $\Sch_S$), but developing this in generality would take us too far afield.
We contend ourselves with the following.
\begin{thm} \label{thm:Spcfet}
Denote by $\Span(\FEt_{(\ph)}) \in \Fun(\Sch^\op, \Cat_\infty)$ the functor $S \mapsto \Span(\FEt_S)$.
Then there is a canonical natural transformation \[ \Span(\FEt_{(\ph)}) \to \SH(\ph) \in \Fun(\Sch^\op, \Cat_\infty). \]
It sends $X \in \FEt_S$ to $\Sigma^\infty_+ X \in \SH(S)$.
\end{thm}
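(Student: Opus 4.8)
The plan is to build, for each scheme $S$, a functor $P_S \colon \Span(\FEt_S) \to \SH(S)$ sending an object $X$ to $\Sigma^\infty_+ X$, and then to check that $S \mapsto P_S$ is natural in $S$ for $*$-pullback. On the wide subcategory of forward morphisms — which is just $\FEt_S$ — there is nothing to do: there $P_S$ is the restriction along $\FEt_S \hookrightarrow \Sm_S$ of the standard functor $\Sm_S \to \SH(S)$, $U \mapsto \Sigma^\infty_+ U$, and this is patently natural in $S$. All the content is in the backward morphisms, i.e.\ in the transfers, which is where ambidexterity enters.

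For a morphism $g \colon W \to X$ of finite étale $S$-schemes (automatically finite étale), ambidexterity of $\SH(\ph)$ \cite[Theorem 6.18(1,2)]{hoyois-equivariant} provides a canonical equivalence $g_\sharp \wequi g_*$; composing the unit of the adjunction $g^* \dashv g_* \wequi g_\sharp$ with $x_\sharp(\ph)x^*$, where $x \colon X \to S$ is the structure map, yields a transfer $\mathrm{tr}_g \colon \Sigma^\infty_+ X \to \Sigma^\infty_+ W$. A general span $X \xleftarrow{g} W \xrightarrow{h} Y$ is then sent to the composite $\Sigma^\infty_+ X \xrightarrow{\mathrm{tr}_g} \Sigma^\infty_+ W \xrightarrow{\Sigma^\infty_+ h} \Sigma^\infty_+ Y$. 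To assemble this into an actual functor out of $\Span(\FEt_S)$ I would appeal to the universal-property/unstraightening machinery for span categories attached to ambidextrous functors \cite{harpaz2020ambidexterity}: since $\FEt_S$ is closed under pullbacks, the only thing to verify is that the forward maps and the transfers satisfy the Beck--Chevalley condition on each pullback square of finite étale $S$-schemes, and this is smooth base change for $\SH(\ph)$ together with the compatibility of the equivalence $g_\sharp \wequi g_*$ with base change (which is built into ambidexterity). In this way $P_S$ is the ``free'' functor classifying the unit $\1 \in \SH(S)$, in exact analogy with Proposition \ref{prop:semiadd-basics}; one may phrase this by saying that $\SH(\ph)$ restricted to finite-étale morphisms is $\FEt_S$-semiadditive and that $\Span(\FEt_S)$ is the free such category on a point.

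Naturality in $S$ then falls out because every ingredient of $P_S$ is compatible with $*$-pullback: for $f \colon S' \to S$, the functor $f^*$ commutes with $(\ph)_\sharp$ along smooth maps (smooth base change) and with $(\ph)_*$ along finite maps (proper base change), and the ambidexterity equivalence is natural for base change; hence $f^*$ carries $\mathrm{tr}_g$ to $\mathrm{tr}_{g_{S'}}$ and forward maps to forward maps, so $P_{S'} \circ (\ph \times_S S') \wequi f^* \circ P_S$. The real labour — and the only genuine obstacle — is the homotopy-coherent bookkeeping needed to package the pointwise functors $P_S$ and these compatibility squares into a single object of $\Fun(\Sch^\op, \Cat_\infty)$; this is best handled not by hand but by feeding the Beck--Chevalley data above into a sufficiently robust, already-coherent construction of span-category functoriality (equivalently, by recognising the whole construction as a formal consequence of the axioms of a motivic $\infty$-category together with finite-étale ambidexterity), which also makes the word ``canonical'' in the statement precise.
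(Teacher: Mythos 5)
Your construction is essentially correct, but it is a genuinely different route from the one the paper takes: the paper's entire proof is ``Combine \cite[Theorem 18]{hoyois2018localization} and \cite[\S4.3.15]{EHKSY}'', i.e.\ it extracts the functor from the framed-correspondences machinery (finite \'etale morphisms are canonically tangentially framed, and the resulting functor out of spans of framed correspondences into $\SH(\ph)$ is already constructed, coherently in $S$, in those references). What you propose is the ``six-functor'' construction that the introduction of the paper uses as motivation: define transfers via the ambidexterity equivalence $g_\sharp \wequi g_*$ of \cite[Theorem 6.18]{hoyois-equivariant}, check Beck--Chevalley on pullback squares, and feed this into a coherent span-category machine. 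That route does work, and it has the merit of being self-contained relative to the six operations rather than relying on the framed recognition package; the cost is precisely the coherence bookkeeping you defer at the end, which is the real content and which the paper outsources. Two cautions on your write-up. First, \cite{harpaz2020ambidexterity} is not the right tool for assembling the functor: Harpaz's universality results concern spans of finite sets and $m$-finite spaces, not $\Span(\FEt_S)$; the appropriate technology is Barwick-style unfurling of the Beck--Chevalley data for $X \mapsto \SH(X)$ restricted to $\FEt_S$ (as in \cite[Appendix C]{norms}, which is how the paper itself produces such transfers elsewhere), after which one evaluates the resulting $\Span(\FEt_S)$-shaped diagram of endofunctors of $\SH(S)$ on the unit. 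Second, your closing claim that $\Span(\FEt_S)$ is the \emph{free} ``$\FEt$-semiadditive'' category on a point is exactly the general theory the paper explicitly declines to develop (``developing this in generality would take us too far afield''); it is not needed, since the theorem only asserts the existence of a canonical functor, not a universal property.
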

\begin{proof}
Combine \cite[Theorem 18]{hoyois2018localization} and \cite[\S4.3.15]{EHKSY}.
\end{proof}

\section{Main result} \label{sec:main}
Fix a scheme $S$.
The class of morphisms in $\PSh(\Sm_S)$ which become an equivalence after applying \[ \Sigma^\infty_+(\ph)_{(p)}: \PSh(\Sm_S) \to \SH(S)_{(p)} \] is strongly saturated and of small generation \cite[Proposition 5.5.4.16]{HTT}.
It follows \cite[Proposition 5.5.4.15]{HTT} that there exists a (Bousfield) localization \[ L: \PSh(\Sm_S) \to \PSh(\Sm_S) \] such that $L\alpha$ is an equivalence if and only if $\Sigma^\infty_+(\alpha)_{(p)}$ is an equivalence.
\begin{lem} \label{lem:L-basics}
Let $f: S' \to S$ be a morphism of schemes.
\begin{enumerate}
\item $f^*: \PSh(\Sm_S) \to \PSh(\Sm_{S'})$ preserves $L$-equivalences and $f_*: \PSh(\Sm_{S'}) \to \PSh(\Sm_S)$ preserves $L$-local objects.
\item If $f$ is smooth, then $f_\sharp: \PSh(\Sm_{S'}) \to \PSh(\Sm_S)$ preserves $L$-equivalences and $f^*$ preserves $L$-local objects.
\item If $f$ is finite étale, then $f_*$ preserves $L$-equivalences.
\end{enumerate}
In particular, $L$ preserves finite products.
\end{lem}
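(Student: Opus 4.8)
The plan is to reduce every assertion to the compatibility of the functor $\Sigma^\infty_+(\ph)_{(p)} \colon \PSh(\Sm_S) \to \SH(S)_{(p)}$ with the functoriality of $\SH(\ph)$, together with two formal facts about Bousfield localizations: (i) a presheaf $Z$ is $L$-local if and only if $\Map(\gamma, Z)$ is an equivalence for every $L$-equivalence $\gamma$, which holds since the class of $L$-equivalences is strongly saturated; and (ii) a functor $G$ preserves $L$-local objects as soon as it admits a left adjoint $F$ preserving $L$-equivalences, which is immediate from (i) and the adjunction.

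I would first treat the statements about $L$-equivalences, which carry the content. For $f^*$: the functor $f^*$ on $\SH(\ph)$ is symmetric monoidal, $(\ph)_{(p)} = (\ph) \otimes \1_{(p)}$ is smashing, and $f^*\1_{(p)} \simeq \1_{(p)}$, so $f^*$ commutes with $p$-localization; since also $\Sigma^\infty_+ f^* \simeq f^* \Sigma^\infty_+$, one gets $\Sigma^\infty_+(f^*\alpha)_{(p)} \simeq f^*\big((\Sigma^\infty_+\alpha)_{(p)}\big)$, which is an equivalence whenever $\alpha$ is an $L$-equivalence. For $f_\sharp$ with $f$ smooth the same computation works, using $\Sigma^\infty_+ f_\sharp \simeq f_\sharp \Sigma^\infty_+$ and the smooth projection formula $f_\sharp(E \otimes f^*F) \simeq f_\sharp E \otimes F$ (taken with $F = \1_{(p)}$) to see that $f_\sharp$ too commutes with $p$-localization.

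The delicate claim is (3): for $f$ finite étale, $f_*$ on presheaves is a right adjoint and $\Sigma^\infty_+ f_* \not\simeq f_* \Sigma^\infty_+$ in general, as one already sees for the fold map. The substitute I would use is the norm functor $f_\otimes \colon \SH(S') \to \SH(S)$ of \cite{norms}, which (a) is symmetric monoidal and commutes with filtered colimits, (b) satisfies $\Sigma^\infty_+ \circ f_* \simeq f_\otimes \circ \Sigma^\infty_+$ as functors $\PSh(\Sm_{S'}) \to \SH(S)$ — here one invokes that the finite étale $f_*$ preserves motivic equivalences, so that this identity, valid on motivic spaces by construction of the norm, descends to all presheaves — and (c) acts on multiplication maps by $f_\otimes(n \cdot \id_E) \simeq n^{\deg f} \cdot \id_{f_\otimes E}$, being étale-locally the $(\deg f)$-fold tensor power. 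From (a) and (c), writing $\1_{(p)} = \colim(\1 \xrightarrow{\ell_1} \1 \xrightarrow{\ell_2} \cdots)$ over the primes $\ell_i \ne p$, one obtains $f_\otimes(E_{(p)}) = \colim\big(f_\otimes E \xrightarrow{\ell_1^{\deg f}} f_\otimes E \xrightarrow{\ell_2^{\deg f}} \cdots\big) = (f_\otimes E)_{(p)}$, i.e.\ $f_\otimes$ commutes with $p$-localization. Combining, $\Sigma^\infty_+(f_*\alpha)_{(p)} \simeq f_\otimes\big((\Sigma^\infty_+\alpha)_{(p)}\big)$ is an equivalence when $\alpha$ is an $L$-equivalence. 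I expect essentially all of the work to sit here: the only genuine input is the norm and its properties (b) and (c), everything else being bookkeeping, and the one real subtlety is the passage in (b) from motivic spaces to all of $\PSh(\Sm_{S'})$.

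Given the $L$-equivalence halves, the $L$-local halves of (1) and (2) follow from fact (ii) applied to $f^* \dashv f_*$ and, for $f$ smooth, $f_\sharp \dashv f^*$. For the final assertion: the terminal presheaf is $L$-local, and for binary products I would either argue directly that $L$-equivalences are closed under products — since $\Sigma^\infty_+(X \times Y) \simeq \Sigma^\infty_+ X \otimes \Sigma^\infty_+ Y$ and $\otimes$ commutes with $(\ph)_{(p)}$ — together with the obvious closure of $L$-local objects under products, so that $X \times Y \to LX \times LY$ exhibits $L(X \times Y) \simeq LX \times LY$; or, equivalently, apply (1) and (3) to the fold map $S \amalg S \to S$, under which $\PSh(\Sm_{S \amalg S}) \simeq \PSh(\Sm_S)^2$, $f_*$ is the binary product, and $L = L \times L$.
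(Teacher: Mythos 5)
Your overall architecture coincides with the paper's: parts (1) and (2) follow from the commutation of $f^*$ and $f_\sharp$ with $\Sigma^\infty_+(\ph)_{(p)}$ together with the formal adjoint statement, part (3) uses the norm $f_\otimes$, and the final claim follows from the fold map (or from $\Sigma^\infty_+(X \times Y) \wequi \Sigma^\infty_+X \otimes \Sigma^\infty_+Y$). The gap is in your step (c). The endomorphism $f_\otimes(n \cdot \id_{\1_{S'}})$ of $\1_S$ is \emph{not} $n^{\deg f}$ in general: norms are not additive, and writing $n \cdot \id_\1$ as the composite of the transfer and the fold of the trivial $n$-fold cover, distributivity identifies $f_\otimes(n)$ with the Euler characteristic of the Weil restriction of $\coprod_n S'$ along $f$ --- a finite étale $S$-scheme of degree $n^{\deg f}$, whose Euler characteristic is a class in $GW$ of \emph{rank} $n^{\deg f}$ but not the integer $n^{\deg f}$. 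For instance, for $f\colon \Spec(\CC) \to \Spec(\RR)$ and $n = 2$ one gets $3 + \langle -1\rangle \in GW(\RR)$, of signature $2 \ne 4$. Checking ``étale-locally'' cannot repair this, since endomorphisms of the unit are not detected étale-locally ($GW$ degenerates to $\Z$ there). Consequently your colimit computation identifying $f_\otimes(E_{(p)})$ with $(f_\otimes E)_{(p)}$ is unjustified as written: one must show that these Euler characteristics become invertible in $\pi_0(\1_S)_{(p)}$ for $n$ coprime to $p$, and since the lemma is stated for arbitrary $S$ (where rank alone does not detect units in $GW_{(p)}$), this is a genuine computation.

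That invertibility is precisely the content of the normed structure on $\1_{(p)}$ established in \cite[Proposition 12.8]{norms}, which is what the paper invokes; combined with \cite[Proposition 7.6(4)]{norms} it yields directly a functor $q_\otimes$ on the $p$-local categories fitting into the square $\Sigma^\infty_+(\ph)_{(p)} \circ q_* \wequi q_\otimes \circ \Sigma^\infty_+(\ph)_{(p)}$, from which (3) follows. So the fix is to replace your step (c) by this citation rather than attempt the computation by hand. Your additional care in descending the identity $\Sigma^\infty_+ f_* \wequi f_\otimes \Sigma^\infty_+$ from motivic spaces to all presheaves (via the fact that $f_*$ preserves motivic equivalences for $f$ finite étale) addresses a point the paper glosses over and is correct.
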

\begin{proof}
Whenever a left adjoint preserves $L$-equivalences, the right adjoint preserves $L$-local objects.
Since $f^*$ and $f_\sharp$ commute with $\Sigma^\infty_+(\ph)_{(p)}$, we get (1) and (2).
To prove (3), note that by \cite[Proposition 12.8]{norms}, $\1_{(p)} \in \SH(S)$ admits a normed structure.
It follows \cite[Proposition 7.6(4)]{norms} that there is a functor $q_\otimes$ making the following diagram commute
\begin{equation*}
\begin{CD}
\Spc(S') @>>> \SH(S')_{(p)} \\
@V{q_*}VV   @V{q_\otimes}VV \\
\Spc(S)  @>>> \SH(S)_{(p)}.
\end{CD}
\end{equation*}
The result follows.
\end{proof}

I have been unable to prove that $L$ preserves coproducts (of Nisnevich sheaves, say).
The following is a weak substitute.
\begin{lem} \label{lem:L-cnst}
Constant (Zariski) sheaves of sets in $\PSh(\Sm_S)$ are $L$-local.
\end{lem}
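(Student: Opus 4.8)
The plan is to reduce to the case of the constant sheaf $\underline{M}$ attached to a $p$-local abelian group $M$, and then to identify such an $\underline{M}$ with the value at a motivic Eilenberg--MacLane spectrum of the right adjoint of $\Sigma^\infty_+$.

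For the reduction, observe that for an arbitrary set $K$ the constant Zariski sheaf $\underline{K}$ is a retract in $\PSh(\Sm_S)$ of $\underline{M}$ for $M = \bigoplus_{k \in K} \mathbb{Z}_{(p)}$: if $K \neq \emptyset$, fix $k_0 \in K$; then the maps of sets $K \to M$, $k \mapsto e_k$, and $M \to K$, sending $e_k \mapsto k$ and all other elements to $k_0$, compose to $\id_K$, and applying the functor ``constant Zariski sheaf'' $\mathrm{Set} \to \PSh(\Sm_S)$ yields the asserted retraction. (The case $K = \emptyset$ can be handled separately, e.g. by writing $\underline{\emptyset}$ as the equalizer of the identity and translation by $1$ on $\underline{\mathbb{Z}_{(p)}}$.) Since the $L$-local objects form a reflective subcategory of $\PSh(\Sm_S)$, they are closed under limits and retracts; hence it suffices to prove that $\underline{M}$ is $L$-local whenever $M$ is a $p$-local abelian group.

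To prove this, let $HM \in \SH(S)$ be the motivic Eilenberg--MacLane spectrum of $M$; as $M$ is $p$-local, so is $HM$. The right adjoint of $\Sigma^\infty_+ \colon \PSh(\Sm_S) \to \SH(S)$ carries $HM$ to the presheaf $U \mapsto \Map_{\SH(S)}(\Sigma^\infty_+ U, HM)$, and this presheaf is discrete: its $n$-th homotopy group at $U$ is $H^{-n,0}(U;M)$, which vanishes for $n > 0$ (motivic cohomology vanishes in negative cohomological degrees) and equals $H^{0,0}(U;M) = \underline{M}(U)$ for $n = 0$. Thus the right adjoint of $\Sigma^\infty_+$ applied to $HM$ is canonically $\underline{M}$, so for every $X \in \PSh(\Sm_S)$ one has
\[ \Map_{\PSh(\Sm_S)}(X, \underline{M}) \simeq \Map_{\SH(S)}(\Sigma^\infty_+ X, HM) \simeq \Map_{\SH(S)_{(p)}}(\Sigma^\infty_+(X)_{(p)}, HM), \]
the second equivalence because $HM$ is $p$-local. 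The right-hand side manifestly inverts $L$-equivalences, i.e. the morphisms $\alpha$ with $\Sigma^\infty_+(\alpha)_{(p)}$ an equivalence, so $\underline{M}$ is $L$-local.

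The only point requiring care is the identification of the right adjoint of $\Sigma^\infty_+$ at $HM$ with $\underline{M}$, which rests on the standard facts that $HM$ exists, is $p$-local, and has $\underline{M}$ as weight-zero coefficients; when $K$ is infinite one also uses that $M \mapsto HM$ commutes with direct sums. I expect no hypotheses on $S$ (and, in contrast with Proposition \ref{prop:Bet}, no reduction to the Noetherian case) to be needed.
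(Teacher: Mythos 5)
Your reduction step (writing an arbitrary constant sheaf as a retract, resp.\ a finite limit, of constant sheaves on ``good'' sets, and using that $L$-local objects are closed under retracts and limits) is essentially the same device the paper uses, and is fine; the paper instead realizes arbitrary sets as retracts of $(\Z/\ell)^C$ and handles $\emptyset$ via strict initiality of the empty sheaf, but these are cosmetic differences. The genuinely different choice is in the main step: you realize $\underline{M}$ as the value of the right adjoint of $\Sigma^\infty_+$ on the motivic Eilenberg--MacLane spectrum $HM$ of a $p$-local group, whereas the paper uses the spectrum representing \emph{étale} cohomology with $\Z/\ell$-coefficients for a prime $\ell \ne p$ invertible Zariski-locally on $S$. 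Your mechanism (``$\Map_{\SH(S)}(\Sigma^\infty_+(-), E)$ inverts $L$-equivalences when $E$ is $p$-local'') is exactly Lemma \ref{lem:SHp-uMap-local} and is sound.

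The gap is the step you flag but then dismiss: the identification of the right adjoint of $\Sigma^\infty_+$ at $HM$ with $\underline{M}$, i.e.\ the assertion that $[\Sigma^n\Sigma^\infty_+ U, HM]_{\SH(S)} = H^{-n}_{\Zar}(U,\underline M)$ for all $n\ge 0$ and all $U \in \Sm_S$, over an \emph{arbitrary} base $S$. This is a theorem of Voevodsky over (perfect) fields and of Spitzweck over Dedekind domains, but for general $S$ the spectrum $HM_S$ is defined by pullback and the computation of its weight-zero homotopy presheaves does not follow formally: continuity reduces to $S$ of finite type over $\Z$ (or $\F_p$), but a smooth $S$-scheme $U$ need not be smooth over the prime field, so one cannot transport the known computation via $f_\sharp$ or $f^*$. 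Since the lemma is invoked in Theorem \ref{thm:main-idea} for arbitrary $\F_p$-schemes $S$, this is not a removable hypothesis. The paper's detour through étale cohomology is designed precisely to avoid this input: representability of $H^*_\et(-,\mu_\ell^{\otimes *})$ needs only $\A^1$-invariance and the $\P^1$-bundle formula, and the identification of its non-positive-degree, weight-zero part with the constant sheaf $\Z/\ell$ is elementary sheaf cohomology over any base where $\ell$ is invertible. So your argument is complete over fields (and Dedekind bases) given the standard citations, but as written it does not prove the lemma in the generality in which it is used; to repair it you would either need to supply the weight-zero computation for $HM$ over general bases or substitute a spectrum whose $\Omega^\infty$ is computable by elementary means, as the paper does.
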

\begin{proof}\todo{What the hell?}
Note that for any set $A$, $L_\Sigma A$ is the sheaf of locally constant $A$-valued functions; i.e. this is the constant sheaf on $A$.
The empty constant sheaf is strictly initial in Zariski sheaves (say), and hence given any map $X \to Y \in \Shv_\Zar(\Sm_S)$ we have $\Map(X, L_\Sigma \emptyset) \wequi \Map(Y, L_\Sigma \emptyset)$ unless possibly $X = L_\Sigma \emptyset$ and $Y \ne L_\Sigma \emptyset$.
But then $\Sigma^\infty_+(X)_{(p)} = 0$ whereas $\Sigma^\infty_+(Y)_{(p)} \ne 0$ (indeed $Y$ admits a base point after pullback to some non-empty smooth $S$-scheme, and hence $\Sigma^\infty_+(Y)_{(p)}$ splits off a copy of $\1_{(p)}$ after such a base change\NB{and $\1_{(p)} \ne 0$ because the category is not zero, e.g. see construction below}).
Hence the empty constant sheaf is $L$-local.

Now let $B \hookrightarrow A$ be an injection of sets, where $B$ is non-empty.
Then the injection splits and so the constant sheaf on $B$ is a retract of the constant sheaf on $A$.
Since $L$-local objects are stable under retracts, it follows that $L_\Sigma B$ is $L$-local as soon as $L_\Sigma A$ is.

It suffices thus to find arbitrarily large sets $A$ which have $L$-local constant sheaves.
We will do so Zariski locally on $S$; since $L$-equivalences are Zariski (in fact Nisnevich) local, Zariski sheaves being $L$-local is a Zariski local property, and so this is enough.
We may thus assume that a prime $\ell \ne p$ is invertible on $S$, and we take $A = (\Z/\ell)^C$ for $C$ arbitrarily large.
Let $E \in \SH(S)_{(p)}$ be the spectrum representing étale motivic cohomology with $\Z/\ell(*)$-coefficients (this exists because étale cohomology is $\A^1$-invariant and satisfies a $\P^1$-bundle formula \cite[Corollaire XV.2.2]{sga4} \cite[Proposition VII.1.1(ii)]{sga5}).
Then $\Omega^\infty E \wequi a_\et \Z/\ell$ is the constant sheaf on $\Z/\ell$, and so \[ \Omega^\infty \prod_C E \wequi (\Z/\ell)^C \] as needed.
\end{proof}

Since $L$ preserves finite products by Lemma \ref{lem:L-basics}, it follows from Lemma \ref{lem:mapping-space-loc} that given a $\PSh(\Sm_S)$-enriched $\infty$-category, we may functorially $L$-localize all the mapping spaces.
Restricting the transformation of Theorem \ref{thm:Spcfet} to $\Sm_S$, composing with $\SH(\ph) \to \SH(\ph)_{(p)}$ and applying Construction \ref{cnstr:enriched}, we obtain a functor of $\PSh(\Sm_S)$-enriched categories \[ \ul{\Span(\FEt_S)} \to \ul{\SH(S)_{(p)}}. \]

\begin{lem} \label{lem:SHp-uMap-local}
The $\PSh(\Sm_S)$-valued mapping spaces in $\ul{\SH(S)_{(p)}}$ are $L$-local.
\end{lem}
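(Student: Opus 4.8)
The plan is to identify each $\PSh(\Sm_S)$-valued mapping presheaf with an object in the essential image of the right adjoint of the colimit-preserving functor $\Sigma^\infty_+(\ph)_{(p)} \colon \PSh(\Sm_S) \to \SH(S)_{(p)}$; every such object is $L$-local essentially by the defining property of $L$.

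First I would unwind the construction. By Corollary~\ref{cor:PSh-enriched}, applied with $\scr W = \Sm_S$ (whose final object is $S$) and $\scr C$ the restriction $\SH(\ph)_{(p)}|_{\Sm_S^\op}$, whose transition maps are the pullback functors $U^*$, the mapping presheaf of $X, Y \in \SH(S)_{(p)}$ is
\[ \ul\Map_{\ul{\SH(S)_{(p)}}}(X, Y) \colon U \longmapsto \Map_{\SH(U)_{(p)}}(U^* X, U^* Y). \]
For a smooth $S$-scheme $U$ the functor $U^*$ is symmetric monoidal with left adjoint $U_\sharp$, and the smooth projection formula $U_\sharp(Z \otimes U^* X) \wequi U_\sharp Z \otimes X$ (applied to $Z = \1$) gives $U_\sharp U^* X \wequi \Sigma^\infty_+(U)_{(p)} \otimes X$; hence by the adjunction $U_\sharp \dashv U^*$,
\[ \Map_{\SH(U)_{(p)}}(U^* X, U^* Y) \wequi \Map_{\SH(S)_{(p)}}\bigl(\Sigma^\infty_+(U)_{(p)} \otimes X,\, Y\bigr). \]
So the mapping presheaf sends $U$ to $\Map_{\SH(S)_{(p)}}(\Sigma^\infty_+(U)_{(p)} \otimes X, Y)$.

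Next I would globalize this over all presheaves: writing an arbitrary $A \in \PSh(\Sm_S)$ as a colimit of representables and using that both $\Sigma^\infty_+(\ph)_{(p)}$ and $(\ph) \otimes X$ preserve colimits while $\Map_{\SH(S)_{(p)}}(\ph, Y)$ carries colimits to limits, one obtains
\[ \Map_{\PSh(\Sm_S)}\bigl(A,\, \ul\Map_{\ul{\SH(S)_{(p)}}}(X, Y)\bigr) \wequi \Map_{\SH(S)_{(p)}}\bigl(\Sigma^\infty_+(A)_{(p)} \otimes X,\, Y\bigr). \]
In other words $\ul\Map_{\ul{\SH(S)_{(p)}}}(X, Y)$ is the presheaf $A \mapsto \Map_{\SH(S)_{(p)}}(\Sigma^\infty_+(A)_{(p)} \otimes X, Y)$, which is manifestly $L$-local: if $\alpha$ is an $L$-equivalence then $\Sigma^\infty_+(\alpha)_{(p)}$ is an equivalence by construction of $L$, hence so is $\Sigma^\infty_+(\alpha)_{(p)} \otimes X$, and therefore $\alpha$ induces an equivalence on mapping spaces into $\ul\Map_{\ul{\SH(S)_{(p)}}}(X, Y)$ by the last display. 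There is no serious obstacle here; the only points needing care are the bookkeeping that identifies the mapping presheaf with \eqref{eq:RC-maps} together with its transition maps $U^*$, and the invocation of the smooth projection formula for $\SH(\ph)$ (the same identity already used in the proof of Lemma~\ref{lem:coprime-bc-stuff}).
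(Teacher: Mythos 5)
Your proposal is correct and follows essentially the same route as the paper: the paper likewise identifies $\ul\Map(X,Y)(U) \wequi \Map(\Sigma^\infty_+ U \otimes X, Y) \wequi \Map(\Sigma^\infty_+ U, M)$ for $M$ the internal mapping object, concluding that $\ul\Map(X,Y) \wequi \Omega^\infty M$ is $L$-local by definition of $L$. Your version merely unpackages the last step by testing directly against $L$-equivalences instead of naming $\Omega^\infty M$, and is slightly more explicit about the colimit-extension and naturality bookkeeping that the paper elides.
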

\begin{proof}
For $E, F \in \SH(S)_{(p)}$, denote by $M \in \SH(S)_{(p)}$ the internal mapping object, i.e. the result of applying the right adjoint of $\otimes E$ to $F$.
Then for $X \in \Sm_S$ we have \[ \ul\Map(E,F)(X) = \Map(E_X, F_X) \wequi \Map(\Sigma^\infty_+ X \otimes E, F) \wequi \Map(\Sigma^\infty_+ X, M). \]
It follows\NB{... something about compatibility of enrichment and closed symmetric monoidal structure} that $\ul\Map(E,F) \wequi \Omega^\infty M$, which is $L$-local essentially by definition.
\end{proof}

\begin{lem} \label{lem:uMap-SpanFet}
Let $X, Y \in \Span(\FEt_S)$.
We have \[ \ul\Map(X,Y) \wequi q_*q^* L_\Sigma \coprod_{n \ge 0} B_\et \Sigma_n, \] where $q: X \times_S Y \to S$ is the projection.
\end{lem}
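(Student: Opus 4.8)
The plan is to identify the mapping presheaf $\ul\Map_{\ul{\Span(\FEt_S)}}(X,Y)$ with a presheaf of the classifying-space form, by analyzing the enriched structure section by section. Recall from Corollary \ref{cor:PSh-enriched} that for $X, Y \in \Span(\FEt_S)$ the presheaf $\ul\Map(X,Y)$ is given on $S' \in \Sm_S$ by $\Map_{\Span(\FEt_{S'})}(X_{S'}, Y_{S'})$. So the first step is a sectionwise computation: describe the mapping space $\Map_{\Span(\FEt_{S'})}(X_{S'},Y_{S'})$ in the span $(2,1)$-category of finite étale $S'$-schemes. A morphism $X_{S'} \to Y_{S'}$ in $\Span(\FEt_{S'})$ is a span $X_{S'} \leftarrow Z \rightarrow Y_{S'}$ with both legs in $\FEt_{S'}$; the space of such spans (with its groupoid of isomorphisms) is the groupoid of finite étale $S'$-schemes equipped with a map to $X_{S'} \times_{S'} Y_{S'}$, i.e.\ $\FEt_{X_{S'} \times_{S'} Y_{S'}}^{\wequi}$. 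Writing $q\colon X \times_S Y \to S$ for the projection, this groupoid over $S'$ is precisely $(q_* q^* \FEt_{(\ph)}^{\wequi})(S')$, so we obtain $\ul\Map(X,Y) \wequi q_*q^* \FEt_{(\ph)}^{\wequi}$ as presheaves on $\Sm_S$, where $\FEt_{(\ph)}^{\wequi}$ denotes the presheaf of groupoids of finite étale schemes.

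The second step is to identify the presheaf $\FEt_{(\ph)}^{\wequi}$ of finite étale schemes. Here I would use the standard fact that, sectionwise, the groupoid of finite étale schemes over $S'$ decomposes by degree, and the degree-$n$ component is the groupoid of finite étale $S'$-schemes of rank $n$, which classifies $\Sigma_n$-torsors (for the étale topology). Thus sectionwise one has $\FEt_{S'}^{\wequi} \wequi \coprod_{n\ge 0} (B_\et \Sigma_n)(S')$, at least after étale sheafification; more precisely, $B_\et \Sigma_n$ is by definition the étale sheafification of $B\Sigma_n$, and finite étale rank-$n$ schemes form exactly the stack of $\Sigma_n$-torsors. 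Since $\FEt_{(\ph)}^{\wequi}$ is already an étale sheaf (descent for finite étale schemes), the comparison map from $\coprod_{n} B\Sigma_n$ exhibits $\FEt_{(\ph)}^{\wequi}$ as the étale sheafification, and applying $L_\Sigma$ (which does not change the relevant identification since these are already $1$-truncated étale sheaves, and $L_\Sigma$ of a set-valued presheaf is its Zariski-constant sheafification) gives $\FEt_{(\ph)}^{\wequi} \wequi L_\Sigma \coprod_{n\ge 0} B_\et \Sigma_n$. One must be a little careful about the order of $L_\Sigma$, étale sheafification, and the coproduct: the cleanest route is to observe that $L_\Sigma$ commutes with coproducts and that the étale classifying space $B_\et\Sigma_n$ already has the correct sections after $L_\Sigma$ because finite étale schemes of a fixed degree do satisfy Zariski (indeed étale) descent.

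The third step is just bookkeeping: combine the two identifications, using that $q_*$ and $q^*$ are defined on presheaf categories and that $q^*$ commutes with $L_\Sigma$ and with the coproduct (as a left-adjoint-compatible restriction functor), to rewrite $\ul\Map(X,Y) \wequi q_*q^*\FEt_{(\ph)}^{\wequi} \wequi q_*q^* L_\Sigma \coprod_{n\ge0} B_\et\Sigma_n$, which is the claimed formula. The main obstacle I anticipate is the second step: pinning down precisely in what sense "the presheaf of finite étale schemes is $\coprod_n B_\et\Sigma_n$" and making sure the localizations $L_\Sigma$, étale sheafification, and the disjoint union are applied in a compatible order, so that the stated right-hand side really does have the correct sections $\Map_{\Span(\FEt_{S'})}(X_{S'},Y_{S'})$ on every $S' \in \Sm_S$ and not merely after further localization. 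Everything else — the description of spans in $\FEt$, and the behavior of $q_*,q^*$ — is formal.
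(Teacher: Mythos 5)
Your proposal is correct and follows essentially the same route as the paper: identify the mapping presheaf with $q_*q^*\FEt_{(\ph)}^{\wequi}$ (the paper packages the first step via self-duality of $Y$ in $\Span(\FEt_S)$ rather than by directly unwinding spans, but the content is the same), then decompose the groupoid of finite étale schemes by degree into $\Sigma_n$-torsors, with $L_\Sigma$ accounting for locally constant degree on clopen covers. The subtlety you flag about the order of $L_\Sigma$, étale sheafification and the coproduct is resolved exactly as you suggest.
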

\begin{proof}
Since $Y$ is self-dual in $\Span(\FEt_S)$, we have $\ul\Map(X,Y) \wequi \ul\Map(X \times Y, S)$, and this latter presheaf coincides with $q_*q^* \ul\Map(S,S)$ essentially by definition.
Finally $\Map_{\Span(\FEt_S)}(S,S) \wequi \FEt_S^\wequi$ naturally in $S$, so that $\ul\Map(S,S) \wequi \FEt_{(\ph)}^\wequi$.
The result follows since any finite étale scheme over $S$ is isomorphic to one of constant degree, locally on a clopen cover of $S$ (and $B_\et \Sigma_n$ is the groupoid of finite étale schemes of degree $n$).
\end{proof}

Write $L\Span(\FEt_S)$ for the ordinary $\infty$-category underlying $L\ul{\Span(\FEt_S)}$.
Since $\Span(\FEt_S)$ is semiadditive, there is a unique functor $\Span(\Fin) \to \Span(\FEt_S)$ sending $*$ to $S$.
\begin{thm} \label{thm:main-idea}
Let $S$ be an $\F_p$-scheme.
\begin{enumerate}
\item The $\infty$-category $L\Span(\FEt_S)$ is a $1$-category.
  In fact for $X, Y \in \FEt_S$ we have \[ \ul\Map_{L\ul{\Span(\FEt_S)}}(X, Y) \wequi q_* L_\Sigma \NN, \] where $q: X \times_S Y \to S$ is the projection.
\item If $S$ is connected, then the functor $\h\Span(\Fin) \to L\Span(\FEt_S)$ (induced via (1) by the composite $\Span(\Fin) \to \Span(\FEt_S) \to L\Span(\FEt_S)$) is fully faithful.
\end{enumerate}
\end{thm}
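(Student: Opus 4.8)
The plan is to obtain (1) by $L$-localizing the formula of Lemma \ref{lem:uMap-SpanFet} term by term, and then to deduce (2) by identifying the induced map on $\pi_0$ of mapping spaces with the identity of $\NN^{mn}$.

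\emph{Part (1).} Fix $X, Y \in \FEt_S$ and let $q \colon X \times_S Y \to S$ be the projection; it is again finite \'etale, and $X \times_S Y$ is an $\F_p$-scheme. By Lemma \ref{lem:uMap-SpanFet}, $\ul\Map_{\ul{\Span(\FEt_S)}}(X,Y) \wequi q_* q^* L_\Sigma \coprod_{n \ge 0} B_\et \Sigma_n$, so I must compute $L$ of this. Since $q$ is finite \'etale, Lemma \ref{lem:L-basics}(1),(3) says $q_*$ preserves both $L$-equivalences and $L$-local objects, hence $q_*$ commutes with $L$-localization. Next, $q^*$ (a restriction of presheaf categories along $\Sm_{X \times_S Y} \to \Sm_S$) commutes with $L_\Sigma$ and with coproducts and carries $B_\et \Sigma_n$ to $B_\et \Sigma_n$ (the group scheme $\Sigma_n$ is constant), so $q^* L_\Sigma \coprod_n B_\et \Sigma_n \wequi L_\Sigma \coprod_n B_\et \Sigma_n$ over $X \times_S Y$. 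Now Proposition \ref{prop:Bet}(3) gives that each projection $B_\et \Sigma_n \to *$ is an $L$-equivalence over the $\F_p$-scheme $X \times_S Y$; since $\Sigma^\infty_+(\ph)_{(p)}$ is cocontinuous (and inverts $L_\Sigma$-equivalences), a coproduct of $L$-equivalences is an $L$-equivalence, so $L_\Sigma \coprod_n B_\et \Sigma_n \to L_\Sigma \NN$ is an $L$-equivalence (using that the presheaf $\coprod_{n \ge 0} *$ is the constant presheaf $\NN$). By Lemma \ref{lem:L-cnst} the target $L_\Sigma \NN$ is already $L$-local, so $L(L_\Sigma \coprod_n B_\et \Sigma_n) \wequi L_\Sigma \NN$. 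Assembling, $\ul\Map_{L\ul{\Span(\FEt_S)}}(X,Y) \wequi q_* L_\Sigma \NN$, which is a sheaf of sets; taking global sections over $S$ shows $L\Span(\FEt_S)$ has discrete mapping spaces, i.e.\ is a $1$-category.

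\emph{Part (2).} Since $L\Span(\FEt_S)$ is a $1$-category by (1), the composite $\Span(\Fin) \to \Span(\FEt_S) \to L\Span(\FEt_S)$ factors through $\h\Span(\Fin) \wequi \FFree_\NN$, giving the functor in question. Every object of $\Span(\Fin)$ is a finite set $\underline m = \{1, \dots, m\}$, sent by $\Span(\Fin) \to \Span(\FEt_S)$ to $X_m = \coprod_{i=1}^m S \in \FEt_S$; note $X_m \times_S X_n \cong \coprod_{i=1}^m \coprod_{j=1}^n S$. By (1), $\Map_{L\Span(\FEt_S)}(X_m, X_n) \wequi \Gamma(S, q_* L_\Sigma \NN) \wequi (L_\Sigma \NN)(X_m \times_S X_n)$, and because $S$ is connected this scheme has exactly $mn$ connected components (each isomorphic to $S$), so a locally constant $\NN$-valued function on it is just a tuple in $\NN^{mn}$. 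On the other side, $\Map_{\h\Span(\Fin)}(\underline m, \underline n)$ is $\pi_0$ of the groupoid of spans $\underline m \leftarrow T \to \underline n$, which decomposes over $\underline m \times \underline n$ into a product of groupoids of finite sets, so it is also $\NN^{mn}$, the class of a span being the tuple $(|T_{(i,j)}|)_{(i,j)}$ of fibre cardinalities. It remains to check the comparison $\NN^{mn} \to \NN^{mn}$ is the identity: by the commuting square the composite $\Span(\Fin)(\underline m, \underline n) \to \h\Span(\Fin)(\underline m, \underline n) \to L\Span(\FEt_S)(\underline m, \underline n)$ equals the composite through $\Span(\FEt_S)$, which sends a span $\underline m \leftarrow T \to \underline n$ to the finite \'etale scheme $\coprod_{t \in T} S \to X_m \times_S X_n$, of degree $|T_{(i,j)}|$ over the $(i,j)$-th component; running this through the degree map $L_\Sigma \coprod_n B_\et \Sigma_n \to L_\Sigma \NN$ recovers exactly $(|T_{(i,j)}|)_{(i,j)}$, i.e.\ the class of the span in $\h\Span(\Fin)(\underline m, \underline n)$. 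Hence the induced map on mapping sets is a bijection and the functor is fully faithful.

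The crux is the $L$-local computation in (1): the essential input is Proposition \ref{prop:Bet}(3), that each $B_\et \Sigma_n$ is killed by $\Sigma^\infty_+(\ph)_{(p)}$ over an $\F_p$-base, and the work lies in propagating this through the infinite coproduct and the operations $q_*$, $q^*$, $L_\Sigma$ — precisely where cocontinuity of $\Sigma^\infty_+(\ph)_{(p)}$ together with Lemmas \ref{lem:L-basics} and \ref{lem:L-cnst} enter. Part (2) is then mostly bookkeeping, but two points should not be skipped: connectedness of $S$ is exactly what pins $(L_\Sigma \NN)(X_m \times_S X_n)$ down to $\NN^{mn}$, and one genuinely must verify the comparison is the identity rather than merely a map between sets of equal cardinality.
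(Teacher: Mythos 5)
Your proof is correct and follows essentially the same route as the paper: part (1) reduces via Lemmas \ref{lem:uMap-SpanFet}, \ref{lem:L-basics} and \ref{lem:L-cnst} to the contractibility of $B_\et\Sigma_n$ from Proposition \ref{prop:Bet}, and part (2) comes down to identifying both $\pi_0$ mapping sets with powers of $\NN$ via degrees of finite \'etale schemes over the connected base. The only cosmetic difference is that the paper invokes semiadditivity to reduce (2) to the case of the single objects $*$ and $S$, whereas you compute the general $\NN^{mn}$ comparison directly.
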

\begin{proof}
(1) It suffices to prove the second statement.
Using Lemmas \ref{lem:uMap-SpanFet} and \ref{lem:L-basics}, it suffices to show that $L B_\et \Sigma_n \wequi *$ and that $L_\Sigma \NN$ is $L$-local.
The first statement is Proposition \ref{prop:Bet}, and the second is Lemma \ref{lem:L-cnst}.

(2) Using semiadditivity, it suffices to show that $\pi_0 \Map_{\Span(\Fin)}(*,*) \wequi \Map_{L\Span(\FEt_S)}(S,S)$.
Since $S$ is connected, both sides canonically identify with $\NN$ (and so each other).
\end{proof}

Recall that for a commutative (semi)ring $R$, we write $\FFree_R$ for the category of finitely generated free $R$-modules.
\begin{cor} \label{cor:main-result}
Let $S$ be an $\F_p$-scheme.
The functor $\Span(\Fin) \to \SH(S)_{(p)}$ factors canonically (as a symmetric monoidal functor) through $\Span(\Fin) \to \h\Span(\Fin) \wequi \FFree_\NN \to \FFree_\Z \to \FFree_{\Z_{(p)}}$.
\end{cor}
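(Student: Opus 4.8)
The plan is to run the factorization sketched in the introduction, but to organise it through the nonabelian derived categories so that the symmetric monoidal structure is easy to track. By the universal property of $\PSh_\Sigma$, the symmetric monoidal functor $\Span(\Fin) \to \SH(S)_{(p)}$ corresponds to a cocontinuous symmetric monoidal functor $c\colon \PSh_\Sigma(\Span(\Fin)) \to \SH(S)_{(p)}$, and factoring it through $\Span(\Fin) \to \FFree_\NN \to \FFree_\Z \to \FFree_{\Z_{(p)}}$ is the same as factoring $c$ through the induced cocontinuous symmetric monoidal functors $\PSh_\Sigma(\Span(\Fin)) \to \PSh_\Sigma(\FFree_\NN) \to \PSh_\Sigma(\FFree_\Z) \to \PSh_\Sigma(\FFree_{\Z_{(p)}})$ (using that $\FFree_R \hookrightarrow \PSh_\Sigma(\FFree_R)$ is monoidal and fully faithful via Yoneda).

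The factorization through $\PSh_\Sigma(\FFree_\NN)$ is the substantial one and is essentially a reformulation of Theorem \ref{thm:main-idea}. Restricting the transformation of Theorem \ref{thm:Spcfet} to $\Sm_S$ (one checks it is symmetric monoidal when $\Span(\FEt_S)$ carries the fibre-product tensor), passing to $\SH(\ph)_{(p)}$, applying Construction \ref{cnstr:enriched}, and $L$-localizing the mapping presheaves via Lemmas \ref{lem:mapping-space-loc} and \ref{lem:SHp-uMap-local}, I obtain a chain $\Span(\Fin) \to \Span(\FEt_S) \to L\Span(\FEt_S) \to \SH(S)_{(p)}$ of symmetric monoidal functors; here $\Span(\Fin) \to \Span(\FEt_S)$ is the unique symmetric monoidal additive functor since $\Span(\Fin)$ is initial in $\CAlg(\Cat_\infty^\oplus)$ (Proposition \ref{prop:semiadd-basics}), and the $L$-localization of symmetric-monoidally $\PSh(\Sm_S)$-enriched $\infty$-categories stays symmetric monoidal because $L$ is compatible with the Cartesian structure on $\PSh(\Sm_S)$: if $\alpha$ is an $L$-equivalence and $Z \in \PSh(\Sm_S)$, then $\Sigma^\infty_+(\alpha \times Z)_{(p)} \simeq \Sigma^\infty_+(\alpha)_{(p)} \otimes \Sigma^\infty_+(Z)_{(p)}$ is again an $L$-equivalence. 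By Theorem \ref{thm:main-idea}(1), $L\Span(\FEt_S)$ is a $1$-category; since $\h\Span(\Fin) \simeq \FFree_\NN$ as symmetric monoidal semiadditive $1$-categories and $\h$ is compatible with the symmetric monoidal structures in play, the symmetric monoidal functor $\Span(\Fin) \to L\Span(\FEt_S)$ factors symmetric-monoidally through $\FFree_\NN$, and therefore $c$ factors through $\PSh_\Sigma(\FFree_\NN)$.

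For the remaining two factorizations, I would recall that $\PSh_\Sigma(\FFree_\Z) \subset \PSh_\Sigma(\FFree_\NN)$ is the full reflective subcategory of grouplike objects, with reflection the group completion, which is a symmetric monoidal localization since tensor products of grouplike objects are grouplike; and that $\PSh_\Sigma(\FFree_{\Z_{(p)}}) \subset \PSh_\Sigma(\FFree_\Z)$ is the full reflective subcategory of $p$-local objects, with reflection the smashing (hence symmetric monoidal) localization $(\ph) \otimes_{H\Z} H\Z_{(p)}$. A cocontinuous symmetric monoidal functor factors cocontinuously and symmetric-monoidally through such a symmetric monoidal reflective localization exactly when its right adjoint lands in the local subcategory. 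The right adjoint of the functor $\PSh_\Sigma(\FFree_\NN) \to \SH(S)_{(p)}$ just produced sends $Z$ to a strictly commutative monoid whose underlying space is $\Map_{\SH(S)_{(p)}}(\1, Z) = \Omega^\infty \map_{\SH(S)_{(p)}}(\1, Z)$, which is an infinite loop space because $\SH(S)_{(p)}$ is stable, so we factor through $\PSh_\Sigma(\FFree_\Z)$; and the homotopy groups of $\map_{\SH(S)_{(p)}}(\1, Z)$ are $\Z_{(p)}$-modules because $\SH(S)_{(p)}$ is $p$-local, so we factor further through $\PSh_\Sigma(\FFree_{\Z_{(p)}})$.

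All of the genuine content is in Theorem \ref{thm:main-idea} (hence in Proposition \ref{prop:Bet}); the rest is bookkeeping. The step I expect to demand the most care is the symmetric-monoidal part of the second paragraph: verifying, within the enriched-category formalism of \S\ref{sec:enriched} and \cite{gepner2015enriched, haugseng2015rectification}, that the chain $\Span(\Fin) \to \Span(\FEt_S) \to L\Span(\FEt_S) \to \SH(S)_{(p)}$ and its $L$-localization are symmetric monoidal, and that a symmetric monoidal functor out of $\Span(\Fin)$ with $1$-categorical target factors symmetric-monoidally through $\h\Span(\Fin)$.
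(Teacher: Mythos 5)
Your proposal is correct and follows essentially the same route as the paper: the factorization $\Span(\Fin) \to \Span(\FEt_S) \to L\Span(\FEt_S) \to \SH(S)_{(p)}$ via Proposition \ref{prop:semiadd-basics}, Construction \ref{cnstr:enriched}, Lemmas \ref{lem:mapping-space-loc} and \ref{lem:SHp-uMap-local}, then Theorem \ref{thm:main-idea} to land in a $1$-category, and finally grouplikeness (from stability) and $p$-locality of the right adjoint for the passage to $\FFree_\Z$ and $\FFree_{\Z_{(p)}}$. The only differences are presentational (routing through $\PSh_\Sigma$ explicitly and your slightly more hands-on justification of the symmetric monoidality of the $L$-localization, which the paper handles by noting that $L$ preserves finite products on $\Cat^{\PSh(\Sm_S)}_\infty$).
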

\begin{proof}
%We may assume that $S = \Spec(\F_p)$, which is in particular regular, Noetherian and connected.
Since $\Span(\FEt_S)$ is semiadditive and $\Span(\FEt_S) \to \SH(S)_{(p)}$ preserves finite coproducts, it follows from Proposition \ref{prop:semiadd-basics} that $\Span(\Fin) \to \SH(S)_{(p)}$ factors canonically through $\Span(\FEt_S)$.
By Lemma \ref{lem:SHp-uMap-local} we have $L\ul{\SH(S)_{(p)}} \wequi \ul{\SH(S)_{(p)}}$.
Thus we obtain a symmetric monoidal factorization \[ \Span(\Fin) \to \Span(\FEt_{S}) \to L\Span(\FEt_{S}) \to L\SH(S)_{(p)} \wequi \SH(S)_{(p)}, \] where $L\SH(S)_{(p)}$ denotes the $\infty$-category underlying $L\ul{\SH(S)_{(p)}}$.
(To see that this factorization is symmetric monodial it suffices to observe that $\ul{\Span(\FEt_S)} \to \ul{\SH(S)_{(p)}}$ is a morphism in $\CMon(\Cat^{\PSh(\Sm_S)}_\infty)$ and $L: \Cat^{\PSh(\Sm_S)}_\infty \to \Cat^{\PSh(\Sm_S)}_\infty$ preserves finite products.)
By Theorem \ref{thm:main-idea}, $L\Span(\FEt_{S})$ is a $1$-category and so $\Span(\Fin) \to L\Span(\FEt_{S})$ factors through $\h\Span(\Fin) \wequi \FFree_\NN$.
The induced functor $\PSh_\Sigma(\FFree_\NN) \to \SH(S)_{(p)}$ has a right adjoint landing in grouplike strictly commutative monoids, and hence factors through $\PSh_\Sigma(\FFree_\NN)^\gp$.
Since $\NN^\gp \wequi \Z$ (this follows from \cite[Proposition 1]{mcduff1976homology}), we obtain the factorization through $\PSh_\Sigma(\FFree_\Z)$ (and so $\FFree_\Z$), which is the category of connective $H\Z$-modules.
Finally since $\SH(\F_{p})_{(p)}$ is $p$-local, we get a further factorization through the $p$-localization of $H\Z$-modules, i.e. $H\Z_{(p)}$-modules, and so through $\FFree_{\Z_{(p)}}$.
\end{proof}

\begin{rmk} \label{rmk:alt-arg}
An alternative argument, avoiding Lemma \ref{lem:L-cnst}, proceeds as follows.
Denote by $L_\et^0: \PSh(\Sm_S) \to \PSh(\Sm_S)$ the localization $F \mapsto a_\et \pi_0 F$.
Then for $X,Y,q$ as in Lemma \ref{lem:uMap-SpanFet} we have $q_* L_\Sigma \NN \wequi L_\et^0 \ul\Map(X,Y)$ (using only Lemma \ref{lem:uMap-SpanFet}).
We can thus form the enriched $\infty$-category $L_\et^0 \ul{\Span(\FEt_S)}$ with mapping presheaves of the form $q_* L_\Sigma \NN$.
Proposition \ref{prop:Bet} shows that for any $\F_p$-scheme $S$, the functor $\ul{\Span(\FEt_S)} \to L_\et^0 \ul{\Span(\FEt_S)}$ is an $L$-equivalence.
Since $\ul{\SH(S)_{(p)}}$ is $L$-local, $\ul{\Span(\FEt_S)} \to \ul{\SH(S)_{(p)}}$ factors through $L_\et^0 \ul{\Span(\FEt_S)}$ (by Lemma \ref{lem:mapping-space-loc}).
Since the latter is a $1$-category, it follows that $\Span(\Fin) \to \SH(S)_{(p)}$ factors through $\h\Span(\Fin)$, as desired.
\end{rmk}

\section{Complements} \label{sec:complements}
\addtocontents{toc}{\protect\setcounter{tocdepth}{2}}
In this section we establish some complementary results.
In \S\ref{subsec:SHS-fet} we recall the category $\SH^{S^1,\fet}(S)$ of motivic $S^1$-spectra with finite étale transfers and we prove that in the formulation of the main theorem, the category $\SH(S)_{(p)}$ can be replaced by $\SH^{S^1,\fet}(S)_{(p)}$, provided that $p \ne 2$.
We use this in \S\ref{subsec:picard} to show that there is a map $H\Z \to \Pic(\SH(\F_p)_{(p)})$ classifying $\Sigma^{2,1} \1_{(p)}$.
Then in \S\ref{subsec:graded-mult} we generalize the descriptions of $\CMon(\Spc)$ and $\Spc^\fet(S)$ of \S\ref{sec:semiadd} to the case of commutative (normed) rings, graded on some commutative monoid $A$.
We use this in \S\ref{subsec:normed-SCR} to prove a variant of our main result: if $E \in \NAlg(\SH(\F_p)_{(p)}^A)$ then the global sections of $E$ has the structure of an $A$-graded simplicial commutative ring.
Finally in \S\ref{subsec:MGL} we show that the sphere graded global sections of $\MGL_{(p)}$ over $\F_p$ carry the structure of a $\Z$-graded simplicial commutative ring, and admit a ring map from the truncated $\Z$-graded ring $L_*$ (the Lazard ring).

\subsection{Variant for $S^1$-spectra with finite étale transfers} \label{subsec:SHS-fet}
We put $\SH^{S^1,\fet}(S) = \Spc^\fet(S)[(S^1)^{-1}]$ (see \cite[\S3.3]{bachmann-MGM} for basic facts about the category $\Spc^\fet(S) = L_\mot \PSh_\Sigma(\Cor^\fet(\Sm_S))$ of motivic spaces with finite étale transfers).
Here are some properties of this construction.

\begin{prop} \label{prop:S1-fet} \hfill
\begin{enumerate}
\item If $f: S' \to S$ is finite étale, then $\Sigma^\infty_{S^1} f_\otimes(S^1) \in \SH^{S^1,\fet}(S)$ is invertible.
\item $\SH^{S^1,\fet}(\ph)$ has norms compatible with those on $\Spc(\ph)_*$ (and $\Spc^\fet(\ph)$).
\item $\SH^{S^1,\fet}(\ph)$ satisfies continuity and localization (in the sense of \cite[\S A.5.3, \S2.3]{triangulated-mixed-motives}), and is a Nisnevich sheaf.
\item The adjunction $\Spc^\fet(S) \adj \SH^{S^1,\fet}(S)$ induces an equivalence between objects of $\Spc^\fet(S)$ which are grouplike and strongly $\A^1$-invariant (in the sense of \cite[Definition 3.1.6]{EHKSY}) and objects of $\SH^{S^1,\fet}(S)$ which are connective as Nisnevich sheaves of spectra.
\item If $k$ is a field of characteristic $\ne 2$ we have $\ul{\pi}_0(\1_{\SH^{S^1,\fet}(k)}) \wequi \ul{GW}$.
\end{enumerate}
\end{prop}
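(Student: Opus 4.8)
The plan is to move the computation into the unstable category $\Spc^\fet(k)$ by means of part (4), produce a comparison map with $\ul{GW}$ in both directions, and reduce the residual injectivity to a transfer-compatibility statement.

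\emph{Reduction.} Since the unit $\1 \in \SH^{S^1,\fet}(k)$ is connective, $\mathcal A := \ul\pi_0(\1)$ lies in the heart $\SH^{S^1,\fet}(k)^\heartsuit$, which by part (4) is the category of grouplike strongly $\A^1$-invariant objects of $\Spc^\fet(k)$; there $\mathcal A = \pi_0\Omega^\infty_{S^1}\1$. As $\1$ is an $\scr E_\infty$-ring, $\mathcal A$ is the \emph{initial} commutative ring object of $\SH^{S^1,\fet}(k)^\heartsuit$. Moreover, unwinding $\Spc^\fet(k) = L_\mot\PSh_\Sigma(\Cor^\fet(\Sm_k))$ and the motivic group-completion (Barratt--Priddy--Quillen) formalism, $\Omega^\infty_{S^1}\1$ is the $\A^1$-localisation of the group completion of $\coprod_{n\ge 0}(\FEt^{=n}_{(-)})^\simeq$, the free commutative monoid with finite étale transfers on the point, so $\mathcal A$ is generated as a sheaf of abelian groups by the classes $[\Spec L]$ of finite étale schemes.

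\emph{A comparison in both directions.} On one side, Morel's computation $\ul\pi_0(\1_{\SH^{S^1}(k)})\cong\ul{GW}$ (where $\mathrm{char}\,k\neq 2$ is essential) together with $\pi_0$ of the unit $\1_{\SH^{S^1}(k)}\to\gamma_*\gamma^*\1=\gamma_*\1$ of the adjunction $\gamma^*\dashv\gamma_*$ between $\SH^{S^1}(k)$ and $\SH^{S^1,\fet}(k)$ (``add / forget transfers'') gives an $\scr E_\infty$-ring map $s\colon\ul{GW}\to\gamma_*\mathcal A$ of underlying, non-transfer sheaves. On the other side, $\ul{GW}=\ul{K}^{MW}_0$ carries its own finite étale transfer structure (Morel; the Milnor--Witt cycle module structure, cf.\ \cite[\S3.3]{bachmann-MGM}), so defines an object of $\Spc^\fet(k)$ with Eilenberg--MacLane object $E^\fet\ul{GW}\in\SH^{S^1,\fet}(k)$, an $\scr E_\infty$-algebra; initiality of $\mathcal A$ then gives a canonical $\scr E_\infty$-ring-with-transfers map $\theta\colon\mathcal A\to\ul{GW}$, sending $[\Spec L]$ to the finite étale transfer of $\langle1\rangle$, i.e.\ the trace form $\langle\Tr_{L/k}(xy)\rangle$. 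Since $\gamma_*\theta\colon\gamma_*\mathcal A\to\ul{GW}$ is again an $\scr E_\infty$-ring map of sheaves, $\gamma_*\theta\circ s$ is an endomorphism of the initial $\scr E_\infty$-ring $\ul{GW}=\pi_0(\1_{\SH^{S^1}(k)})$ and hence equals $\id_{\ul{GW}}$; thus $\ul{GW}$ is a retract of $\mathcal A$ and $\theta$ is split surjective. (Surjectivity can also be seen directly: $\theta$ being a map of unramified sheaves, it suffices that trace forms generate $GW(L)$, which holds since the trace form of $L[\sqrt d]/L$ is $\langle2,2d\rangle$ and $2\in L^\times$ --- again using $\mathrm{char}\,k\neq 2$.)

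\emph{Injectivity --- the main obstacle.} It is enough to establish the claim $(\ast)$: the section $s$ is actually a morphism of $\Spc^\fet(k)$, i.e.\ compatible with finite étale transfers. Indeed then $s\circ\theta$ is an endomorphism of the initial commutative-ring-with-transfers $\mathcal A$, so equals $\id_{\mathcal A}$, whence $\theta$ is a split monomorphism; with split surjectivity above, $\theta$ is an isomorphism, which is the assertion. Claim $(\ast)$ amounts to a coherence statement reconciling two constructions of ``transfer along a finite separable extension $L/k$'': the one intrinsic to $\ul{GW}$ (Morel's / cycle-module transfer) and the one $\mathcal A$ inherits from the ambidexterity / finite étale transfer maps of $\SH^{S^1,\fet}$; I expect this reconciliation, rather than the homotopy-theoretic bookkeeping above, to be the crux. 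Should $(\ast)$ be inconvenient, one can instead argue stalkwise: for $\mathrm{char}\,L\neq 2$, $GW(L)$ embeds into $\mathbb Z\times\prod_{\mathrm{orderings}}\mathbb Z\times H^\ast_\et(L;\mathbb Z/2)$ (rank, signatures, Stiefel--Whitney classes), and one checks that $\mathcal A(L)=[\1,\1]_{\SH^{S^1,\fet}(L)}$ maps compatibly to the corresponding invariants computed after $\A^1$-, real-étale- (via the identification of $\rho$-inverted motivic homotopy theory with real-étale sheaves) and étale-localisation, where the answers are known and factor through $GW(L)$.
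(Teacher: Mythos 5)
Your proposal only engages with part (5); parts (1)--(4) are nowhere addressed. In the paper these are mostly handled by citation ((1) via the norm-preserving inclusion $\FEt_S\to\SmQP_S$ and \cite[Proposition 9.11]{norms}, (3) by references to \cite{hoyois2018localization} and \cite{bachmann-MGM}, (4) by rerunning the proof of \cite[Proposition 3.1.9]{EHKSY}), but since your argument for (5) leans on (4) to identify the heart, you cannot simply omit them.

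For (5) itself, your route differs from the paper's. The paper argues that the forgetful functor from presheaves with finite \'etale transfers commutes with motivic localization, deduces that $\ul\pi_0(\1_{\SH^{S^1,\fet}(k)})$ is the \emph{initial} strictly $\A^1$-invariant sheaf of abelian groups under $\pi_0(\FEt^\wequi)^\gp$, and then quotes \cite[Proposition 7]{bachmann-claim} for the identification of that initial object with $\ul{GW}$. Your two-sided comparison ($s$ and $\theta$, with both composites controlled by initiality) is a reasonable reorganization of the same idea. However, the genuine gap is exactly the step you flag as $(\ast)$: the compatibility of Morel's section $s\colon\ul{GW}\to\gamma_*\mathcal A$ with finite \'etale transfers, equivalently the statement that the transfers $\mathcal A$ inherits from ambidexterity in $\SH^{S^1,\fet}$ restrict on $\ul{GW}$ to the trace-form (Morel/Milnor--Witt) transfers. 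This is not ``bookkeeping''; it is the entire mathematical content of the cited Proposition 7, and without it your argument establishes only that $\ul{GW}$ is a retract of $\mathcal A$, not the isomorphism. The fallback ``stalkwise'' argument is not a repair: detecting $GW(L)$ by rank, signatures and cohomological invariants requires the resolved Milnor conjectures, and more importantly you would still have to compute $[\1,\1]_{\SH^{S^1,\fet}(L)}$ after the various localizations, which is no easier than the original problem. As written, the proposal is therefore incomplete at its decisive step.
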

\begin{proof}
(1) The inclusion $\Fin_{\widehat\Pi_1^\et(S)} \wequi \FEt_S \to \SmQP_S$ preserves norms (see \cite[\S10.2]{norms} for details) and hence induces a functor \[ \PSh_\Sigma(\Span(\Fin_{\widehat\Pi_1^\et(S)}))[(S^1)^{-1}] \to \SH^{S^1,\fet}(S). \]
It is thus enough to prove that $f_\otimes(S^1)$ is invertible in the source category.
This follows from \cite[Proposition 9.11]{norms} (since $f_\otimes(S^1)$ is invertible in the category $\SH(X)$ of \emph{loc. cit.}, by definition).

(2) Immediate consequence of (1).

(3) Continuity is standard.
For localization see \cite[Remark 12]{hoyois2018localization}.
For descent see \cite[Lemma 6.1]{bachmann-MGM}.

(4) The proof of \cite[Proposition 3.1.9]{EHKSY} goes through essentially unchanged.

(5) Since $\PSh_\Sigma(\Cor^\fet(\Sm_S)^\op) \to \PSh_\Sigma(\Sm_{S+})$ commutes with $L_\mot$ \cite[Lemma 3.26]{bachmann-MGM}, the forgetful functor $\Fun^\times(\Cor^\fet(\Sm_S)^\op, \SH) \to \Fun^\times(\Sm_{S}^\op, \SH)$ also does (since the forgetful functor on prespectra commutes with levelwise motivic localization and spectrification).
It follows that $\ul{\pi}_0(\1_{\SH^{S^1,\fet}(k)})$ is the initial strictly $\A^1$-invariant sheaf of abelian groups under $\pi_0 (\FEt^{\wequi})^\gp$ (using that $\1_{\PSh_\Sigma(\Cor^\fet(S))} \wequi \FEt^\wequi$).
This is $\ul{GW}$ by \cite[Proposition 7]{bachmann-claim}.
\end{proof}

We can now replace $\SH(\ph)$ by $\SH^{S^1,\fet}(\ph)$ in the main theorem.
\begin{prop} \label{prop:main-variant}
Let $S$ be an $\F_p$-scheme, where $p \ne 2$.
The canonical functor $\Span(\Fin) \to \SH^{S^1,\fet}(\ph)_{(p)}$ factors through $\h\Span(\Fin)$.
\end{prop}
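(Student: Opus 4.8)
The plan is to repeat the argument of Section~\ref{sec:main}, in the streamlined form of Remark~\ref{rmk:alt-arg}, with $\SH(\ph)_{(p)}$ replaced throughout by $\SH^{S^1,\fet}(\ph)_{(p)}$. Three things must be re-established in this setting: a canonical natural transformation $\Span(\FEt_{(\ph)}) \to \SH^{S^1,\fet}(\ph)$; the locality of the associated $\PSh(\Sm_S)$-valued mapping presheaves in $\ul{\SH^{S^1,\fet}(S)_{(p)}}$; and the input of Remark~\ref{rmk:generalized-contractibility} for the functor $\SH^{S^1,\fet}(\ph)_{(p)}$.

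For the natural transformation I would argue as in Theorem~\ref{thm:Spcfet}: since $\SH^{S^1,\fet}(\ph)$ is built from $\PSh_\Sigma(\Cor^\fet(\Sm_{(\ph)}))$ it carries covariant transfers along finite étale maps, and it has norms compatible with those on $\Spc^\fet(\ph)$ by Proposition~\ref{prop:S1-fet}(2), so the same bookkeeping producing $\Span(\FEt_{(\ph)}) \to \SH(\ph)$ produces $\Span(\FEt_{(\ph)}) \to \SH^{S^1,\fet}(\ph)$, sending $X \in \FEt_S$ to $\Sigma^\infty_{S^1,+} X$; alternatively one invokes Proposition~\ref{prop:S1-fet}(1) to obtain ambidexterity for finite étale morphisms and then proceeds exactly as in the semiadditive case (Proposition~\ref{prop:semiadd-basics}), relativized over $\Sch$. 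Composing with $p$-localization, restricting to $\Sm_S$ and applying Construction~\ref{cnstr:enriched} yields $\ul{\Span(\FEt_S)} \to \ul{\SH^{S^1,\fet}(S)_{(p)}}$. Locality is Lemma~\ref{lem:SHp-uMap-local} with the same proof: $\SH^{S^1,\fet}(S)_{(p)}$ is closed symmetric monoidal, so its $\PSh(\Sm_S)$-valued mapping presheaves are of the form $\Omega^\infty M$ for an internal mapping $S^1$-spectrum $M$, hence local for the localization $L$ detected by $\Sigma^\infty_{S^1,+}(\ph)_{(p)}$. The mapping presheaf computation of Lemma~\ref{lem:uMap-SpanFet} concerns only $\Span(\FEt_{(\ph)})$ and so is unchanged.

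The crux is checking that $\scr C := \SH^{S^1,\fet}(\ph)_{(p)}$ meets the hypotheses of Remark~\ref{rmk:generalized-contractibility}, which then yields $\Sigma^\infty_{S^1,+}(B_\et G)_{(p)} \wequi \1$ for every ind-(finite étale) group scheme $G$ over an $\F_p$-scheme. Here: finite étale schemes are strongly dualizable by Proposition~\ref{prop:S1-fet}(1); $\scr C$ is additive and satisfies continuity, localization and Nisnevich descent by Proposition~\ref{prop:S1-fet}(3); base change to fields is conservative on finite-type $\Z$-schemes (a formal consequence of continuity and localization, via \cite[Corollary 14]{bachmann-real-etale}); and by Proposition~\ref{prop:S1-fet}(5), for a field $k$ of characteristic $\ne 2$ one has $[\1,\1]_{\SH^{S^1,\fet}(k)} \wequi GW(k)$, while the $\SH^{S^1,\fet}$-Euler characteristic of a finite étale $k$-algebra agrees with its rank (checked after geometric base change), so Euler characteristics of degree coprime to $p$ become invertible in $GW(k)_{(p)}$ exactly as in Lemma~\ref{lem:coprime-bc-stuff}. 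This is the only place the restriction $p \ne 2$ enters: it is what makes the identification $\ul\pi_0(\1) \wequi \ul{GW}$, hence the analog of Lemma~\ref{lem:coprime-bc-stuff}, available.

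With these in hand the conclusion is as in Remark~\ref{rmk:alt-arg}: the maps $B_\et \Sigma_n \to *$ are $L$-equivalences, so by Lemma~\ref{lem:mapping-space-loc} the functor $\ul{\Span(\FEt_S)} \to \ul{\SH^{S^1,\fet}(S)_{(p)}}$ factors through the $\PSh(\Sm_S)$-enriched category obtained by applying $L_\et^0 = a_\et\pi_0$ (which preserves finite products) to the mapping presheaves, which by Lemma~\ref{lem:uMap-SpanFet} then take the discrete form $q_*L_\Sigma\NN$; thus the underlying $\infty$-category is a $1$-category, and combined with the semiadditive factorization $\Span(\Fin)\to\Span(\FEt_S)$ this shows $\Span(\Fin)\to\SH^{S^1,\fet}(S)_{(p)}$ factors through $\h\Span(\Fin)$. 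I expect the main obstacle to be making precise the natural transformation $\Span(\FEt_{(\ph)}) \to \SH^{S^1,\fet}(\ph)$ — genuinely exhibiting the finite étale transfers as a functor out of the span category, naturally in the base — and, secondarily, checking the compatibility of the $\SH^{S^1,\fet}$-Euler characteristic with ranks that makes Remark~\ref{rmk:generalized-contractibility} applicable.
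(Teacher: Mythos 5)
Your proposal is correct and follows essentially the same route as the paper's (very terse) proof: verify the hypotheses of Remark~\ref{rmk:generalized-contractibility} for $\SH^{S^1,\fet}(\ph)_{(p)}$ using Proposition~\ref{prop:S1-fet}(3,5), introduce the localization $L'$ detected by $\Spc(S) \to \SH^{S^1,\fet}(S)_{(p)}$, and rerun the argument of \S\ref{sec:main} (you explicitly take the Remark~\ref{rmk:alt-arg} shortcut, which is a sensible choice since it sidesteps the analog of Lemma~\ref{lem:L-cnst}). One small citation quibble: strong dualizability of finite étale schemes is not what Proposition~\ref{prop:S1-fet}(1) says (that item concerns invertibility of $f_\otimes(S^1)$); the correct justification, as in the paper, is that finite étale schemes are self-dual in $\Cor^\fet(S)$ essentially by construction.
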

\begin{proof}
The functor $\Spc(S) \to \SH^{S^1,\fet}(S)_{(p)}$ contracts $B_\et G$ for any $G$.
Indeed the requirements of Remark \ref{rmk:generalized-contractibility} hold, by Proposition \ref{prop:S1-fet}(3,5) (note also that finite étale schemes are dualizable in $\Cor^\fet(S)$, essentially by construction).
Define a localization $L': \Spc(S) \to \Spc(S)$ with equivalences detected by $\Spc(S) \to \SH^{S^1,\fet}(S)_{(p)}$.
It follows that $L'$ satisfies all the properties of the localization $L$ used in \S\ref{sec:main}, and so the arguments generalize to give the desired result.
\end{proof}

\begin{rmk}
Since constant sheaves are strictly $\A^1$-invariant, the functor $\SH \to \SH^{S^1}(\bar\F_p)$ is fully faithful.
It follows that mapping spaces in $\SH^{S^1}(\bar\F_p)_{(p)}$ are \emph{not} strictly commutative, i.e. it is not possible to replace $\SH(\ph)$ by $\SH^{S^1}(\ph)$ in the main result.
\end{rmk}

\subsection{Strictly commutative Picard spectra} \label{subsec:picard}
If $\scr C$ is a symmetric monoidal $\infty$-category, we denote by $\Pic(\scr C) \subset \scr C^\wequi$ the full subgroupoid on $\otimes$-invertible objects.
This defines a grouplike object of $\CMon(\Spc)$, i.e. a connective spectrum.

Denote by $\ul\Pic^0(\SH(S)_{(p)}) \in \PSh(\Sm_S)$ the presheaf of subgroupoids of $\SH(\ph)_{(p)}^\wequi$ on objects which are Nisnevich locally equivalent to $\Sigma^{2n,n}\1_{(p)}$ for some $n$.
\begin{lem} \label{lem:pic-sp}
$\ul\Pic^0(\SH(S)_{(p)})$ upgrades to an object of $L_\Nis \Fun^\times(\Cor^\fet(S)^\op, \SH)$.
If $S$ is the spectrum of a field and $p$ is odd\NB{Not minimal assumptions.}, this object is motivically local, i.e. defines an object of $\SH^{S^1,\fet}(S)$.
\end{lem}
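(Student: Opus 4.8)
The plan is to establish the two assertions separately. For the first, the key is that $\SH(\ph)_{(p)}$ is a \emph{normed} $\infty$-category: since $\1_{(p)} \in \SH(S)$ carries a normed structure \cite[Proposition 12.8]{norms}, every finite étale $f \colon S' \to S$ gives a symmetric monoidal norm functor $f_\otimes \colon \SH(S')_{(p)} \to \SH(S)_{(p)}$, natural in $S$ and coherently compatible with base change and composition. A symmetric monoidal functor preserves $\otimes$-invertible objects and carries $\otimes$ to $\otimes$, so $f_\otimes$ restricts to a homomorphism of grouplike $\E_\infty$-spaces $\Pic(\SH(S')_{(p)}) \to \Pic(\SH(S)_{(p)})$. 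Together with the restriction functors $f^*$ (also symmetric monoidal) and the base-change compatibilities, these transfers exhibit the presheaf of connective spectra $\ul\Pic(\SH(\ph)_{(p)})$ as an object of $\Fun^\times(\Cor^\fet(S)^\op, \SH)$, via the usual dictionary between normed presheaves of grouplike $\E_\infty$-spaces and product-preserving presheaves on $\Cor^\fet$ (cf.\ \cite{norms,EHKSY}); and it is a Nisnevich sheaf because $\SH(\ph)$ is a Nisnevich sheaf of symmetric monoidal $\infty$-categories and an object of a descent limit of symmetric monoidal categories is $\otimes$-invertible iff it is so locally. Thus $\ul\Pic(\SH(\ph)_{(p)}) \in L_\Nis\Fun^\times(\Cor^\fet(S)^\op,\SH)$, and it remains to see that the subpresheaf $\ul\Pic^0$ is preserved: it is visibly a sub-grouplike-$\E_\infty$-object, stable under $f^*$ and base change and Nisnevich-local by construction (a subsheaf of $\ul\Pic$); and since $f_\otimes$ is a homomorphism and $\ul\Pic^0$ is generated under the group law and inversion by the pullbacks of $\Sigma^{2,1}\1_{(p)}$, it suffices to note that $f_\otimes(\Sigma^{2,1}\1_{S'})$ is the Thom spectrum over $S$ of the rank-$\deg(f)$ vector bundle $f_*\mathcal{O}_{S'}$ (compatibility of norms with Thom constructions, \cite{norms}), and Thom spectra of vector bundles are Zariski- and hence Nisnevich-locally Tate twists. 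This proves the first assertion.

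For the second, let $S = \Spec k$ with $k$ of characteristic $p$ and $p$ odd, and write $\ul\Pic^0 := \ul\Pic^0(\SH(k)_{(p)})$, a connective object of $L_\Nis \Fun^\times(\Cor^\fet(k)^\op, \SH)$ (it is a sheaf of grouplike $\E_\infty$-spaces). By Proposition \ref{prop:S1-fet}(4) it defines an object of $\SH^{S^1,\fet}(k)$ once it is strongly $\A^1$-invariant, which, being grouplike and connective, amounts to all of its Nisnevich homotopy sheaves being strictly $\A^1$-invariant. These can be read off the Postnikov tower of $\ul\Pic^0$ in the stable category $L_\Nis \Fun^\times(\Cor^\fet(k)^\op, \SH)$; since forgetting transfers and Nisnevich sheafification leaves the homotopy sheaves unchanged, the identification may be made on the underlying presheaf of spectra on $\Sm_k$. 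From the standard cofibre sequence $\Sigma\, \gl_1(\1_{(p)}) \to \ul\Pic^0 \to \ul\Z$ (the last term the constant sheaf $\Z$, recording the Tate weight) one obtains $\ul\pi_0 \ul\Pic^0 \wequi \ul\Z$, $\ul\pi_1 \ul\Pic^0 \wequi \ul\pi_{0,0}(\1_{(p)})^\times \wequi \ul{GW}_{(p)}^\times$, and $\ul\pi_n \ul\Pic^0 \wequi \ul\pi_{n-1,0}(\1_{(p)})$ for $n \ge 2$. The constant sheaf $\ul\Z$ is strictly $\A^1$-invariant; the sheaves $\ul\pi_{n-1,0}(\1_{(p)})$ for $n \ge 2$ are homotopy sheaves of a motivic spectrum, hence strictly $\A^1$-invariant by Morel; and since $k$ has characteristic $p$, no residue field of a smooth $k$-scheme is formally real, so for $p$ odd the rank map induces $\ul{GW}_{(p)} \wequi \ul\Z_{(p)}$ \cite[Theorem III.3.6]{milnor1973symmetric} (as in the proof of Lemma \ref{lem:coprime-bc-stuff}), whence $\ul\pi_1 \ul\Pic^0 \wequi \ul\Z_{(p)}^\times$ is a constant sheaf, in particular strictly $\A^1$-invariant. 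This gives the second assertion.

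The step I expect to be the main obstacle is the first: making precise --- or pinning down the reference for --- the claim that the Picard spectrum of the normed $\infty$-category $\SH(\ph)_{(p)}$ genuinely lands in $L_\Nis \Fun^\times(\Cor^\fet(\ph)^\op, \SH)$, with all the requisite base-change and $\E_\infty$-coherence; this is formal but should be cited rather than reproved, and one must keep track of the fact that the relevant transfers are the multiplicative (norm) ones, not the ambidexterity transfers carried by $\SH(\ph)$ itself. A minor point is that Morel's strict $\A^1$-invariance theorem is classically stated over perfect fields; for imperfect $k$ of characteristic $p$ one uses the known extension to general fields (or reduces along $k \to k^{1/p^\infty}$), which causes no real difficulty.
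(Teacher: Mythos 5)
Your strategy coincides with the paper's on both halves: obtain the transfer structure on $\ul\Pic$ from the norms $f_\otimes$, check that $\ul\Pic^0$ is preserved via compatibility of norms with Thom spectra, and then verify strict $\A^1$-invariance of the homotopy sheaves. But there are two concrete gaps.

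First, the closure of $\ul\Pic^0$ under $f_\otimes$ does not follow from saying that $\ul\Pic^0$ is ``generated under the group law by pullbacks of $\Sigma^{2,1}\1_{(p)}$.'' A section of $\ul\Pic^0(S')$ is only \emph{Nisnevich-locally} a Tate sphere, i.e.\ may be a twisted form, and it is not automatic that the norm of such an object is again locally a Tate sphere: the Nisnevich cover of $S'$ trivializing $E$ need not be pulled back from $S$, so the base-change formula for $f_\otimes$ does not immediately reduce you to the case of an honest sphere. The paper supplies exactly this missing step: any Nisnevich cover of the finite étale scheme $S'$ can, up to clopen refinement, be obtained by pulling back a Nisnevich cover of $S$ (\cite[Tag 04GH(1)]{stacks-project}); only after this reduction does one invoke \cite[Lemma 4.4]{norms} (your Thom-spectrum computation of $f_\otimes(\Sigma^{2,1}\1)$, which is correct).

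Second, for strict $\A^1$-invariance of $\ul\pi_1 \wequi (\ul{GW}_{(p)})^\times$ you add the hypothesis that $k$ has characteristic $p$, so that $\ul{GW}_{(p)} \wequi \ul{\Z}_{(p)}$ by the rank map. The lemma as stated assumes only that $S$ is the spectrum of a field and $p$ is odd; for a formally real field such as $\RR$ your argument breaks down, since $\ul{GW}_{(p)}$ is then not constant. The paper's argument covers all fields by using $\ul{GW}[1/2] \wequi a_\ret \Z[1/2] \times \Z[1/2]$, whence $(\ul{GW}_{(p)})^\times \wequi (a_\ret \Z_{(p)})^\times \times \Z_{(p)}^\times$, together with strict $\A^1$-invariance of real étale sheaves \cite[Example 16.7.2]{real-and-etale-cohomology}. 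Your special case does suffice for the application in Corollary \ref{cor:HZ-Pic} (where $S=\Spec(\F_p)$), but not for the lemma as stated.
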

\begin{proof}
Norms on $\SH(\ph)_{(p)}$ lift the (Nisnevich local) presheaf $\SH(\ph)_{(p)}^\wequi$ to $L_\Nis \PSh_\Sigma(\Cor^\fet(S))$.
Since $L_\Nis \PSh_\Sigma(\Cor^\fet(S))^\gp \wequi L_\Nis \Fun^\times(\Cor^\fet(S)^\op, \SH_{\ge 0})$ (combine \cite[Corollary 2.5]{gepner2016universality} with $\CMon(\Spc)^\gp \wequi \SH_{\ge 0}$), for the first assertion it suffices to prove that if $E \in \SH(S')_{(p)}$ is locally equivalent to a sphere and $f: S' \to S$ is finite étale, then also $f_\otimes(E)$ is locally equivalent to a sphere.
Using that any Nisnevich cover of $S'$ can, up to clopen refinement, be obtained by pulling back a Nisnevich cover of $S$ (use \cite[Tag 04GH(1)]{stacks-project}), we reduce to the case where $E$ itself is a sphere.
This case is handled via \cite[Lemma 4.4]{norms}.
%Using continuity, to prove this we may assume that $S$ is henselian local.
%In this case $S' = S_1 \amalg \dots \amalg S_n$ with each $S_i$ henselian local \cite[Tag 04GH(1)]{stacks-project}, whence $E|_{S_i} \wequi \Sigma^{2n_i,n_i} (\1_{S_i})_{(p)}$ and the result follows from \cite[Lemma 4.4]{norms}.

For the second assertion we need only prove that each $\ul\pi_i := \ul\pi_i \ul\Pic^0(\SH(k)_{(p)})$ is strictly $\A^1$-invariant \cite[Theorem 6.2.7]{morel2005stable}.
This is clear for $\ul\pi_0 = \Z$ and $\ul\pi_i \wequi \ul\pi_{i-1}(\1_{(p)})$ for $i>1$.
On the other hand $\ul\pi_1 = (\ul{GW}_{(p)})^\times$.
We denote by $a_\ret$ sheafification in the real étale topology (see \cite{real-and-etale-cohomology}).
Since $p$ is odd and $\ul{GW}[1/2] \wequi a_\ret \Z[1/2] \times \Z[1/2]$ \cite[Lemma 39, Corollary 19]{bachmann-real-etale} we have $(\ul{GW}_{(p)})^\times \wequi (a_\ret \Z_{(p)})^\times \times \Z_{(p)}^\times$, which is strictly $\A^1$-invariant \cite[Example 16.7.2]{real-and-etale-cohomology}.
\end{proof}

\begin{cor} \label{cor:HZ-Pic}
Let $S$ be an $\F_p$-scheme, where $p \ne 2$.
There is a morphism of connective spectra $H\Z \to \Pic(\SH(S)_{(p)})$ sending $1 \in \Z$ to $\Sigma^{2,1} \1_S$.
\end{cor}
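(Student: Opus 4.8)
The plan is to reduce the statement, via the enriched description of $\ul\Pic^0$ in Lemma~\ref{lem:pic-sp}, to producing a single morphism of Nisnevich sheaves over $\F_p$, and then to exploit the strict commutativity of \S\ref{sec:main} (in the form of Proposition~\ref{prop:main-variant}) to kill the resulting obstructions. First I would reduce to $S=\Spec\F_p$: the structure map $f\colon S\to\Spec\F_p$ gives a symmetric monoidal functor $f^*\colon\SH(\F_p)_{(p)}\to\SH(S)_{(p)}$, hence (by functoriality of $\Pic$ on symmetric monoidal $\infty$-categories) a map of connective spectra $\Pic(\SH(\F_p)_{(p)})\to\Pic(\SH(S)_{(p)})$ carrying $\Sigma^{2,1}\1_{\F_p}$ to $\Sigma^{2,1}\1_S$, so it suffices to build $H\Z\to\Pic(\SH(\F_p)_{(p)})$ hitting $\Sigma^{2,1}\1$.

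Next, by Lemma~\ref{lem:pic-sp} the object $\mathcal P:=\ul\Pic^0(\SH(\F_p)_{(p)})$ is a connective object of $\SH^{S^1,\fet}(\F_p)$ with $\ul\pi_0\mathcal P\simeq\ul\Z$, $\ul\pi_1\mathcal P\simeq(\ul{GW}_{(p)})^\times$, and $\ul\pi_n\mathcal P\simeq\ul\pi_{n-1}(\1_{(p)})$ for $n\ge2$; by Corollary~\ref{cor:main-result} applied to each $X\in\Sm_{\F_p}$, the ring $\End_{\SH(X)_{(p)}}(\1)$ is a connective $H\Z_{(p)}$-algebra, so all the sheaves $\ul\pi_n\mathcal P$ with $n\ge2$ are sheaves of $\Z_{(p)}$-modules. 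Taking global sections over the field $\F_p$, where the Nisnevich cohomological dimension vanishes, the functor $\Gamma:=\mathrm{map}_{\SH^{S^1,\fet}(\F_p)}(\1,-)$ preserves connectivity and truncations and satisfies $\Gamma(\ul\Z)\simeq H\Z$ and $\Gamma(\mathcal P)\simeq\Pic^0(\SH(\F_p)_{(p)})\subset\Pic(\SH(\F_p)_{(p)})$. Hence it is enough to construct a morphism $u\colon\ul\Z\to\mathcal P$ in $\SH^{S^1,\fet}(\F_p)$ that is an isomorphism on $\ul\pi_0$; then $\Gamma(u)$ composed with $\Pic^0(\SH(\F_p)_{(p)})\hookrightarrow\Pic(\SH(\F_p)_{(p)})$, and pulled back along $f^*$, is the desired map.

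I would build $u$ by climbing the Postnikov tower of $\mathcal P$, starting from the identity of $\ul\Z=\tau_{\le0}\mathcal P$. The obstruction to lifting to $\tau_{\le1}\mathcal P$ lies in $[\ul\Z,\Sigma^2\ul\pi_1\mathcal P]$ and, by the standard description of the first $k$-invariant of a Picard spectrum, is represented by the symmetry automorphism of $(\Sigma^{2,1}\1)^{\smsh2}=\Sigma^{4,2}\1$ viewed in $\ul\pi_1\mathcal P=(\ul{GW}_{(p)})^\times$. Since $p$ is odd, no residue field of an $\F_p$-scheme is formally real, so $\ul{GW}[1/2]\simeq\ul\Z[1/2]$ (as in the proof of Lemma~\ref{lem:pic-sp}, cf.\ \cite{bachmann-real-etale}); thus $\ul{GW}_{(p)}\simeq\ul\Z_{(p)}$ via the rank, and since the symmetry of $\Sigma^{4,2}\1$ has rank $1$ (checked after base change to $\bar\F_p$, where it is the topological symmetry of $S^4$), it is the identity. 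So this first obstruction vanishes and we obtain $u_1\colon\ul\Z\to\tau_{\le1}\mathcal P$ hitting the generator.

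The remaining obstructions to lifting $u_1$ all the way up lie in the groups $[\ul\Z,\Sigma^{n+2}\ul\pi_{n+1}\mathcal P]$ for $n\ge1$, whose coefficients are $p$-local, so they may be computed inside $\SH^{S^1,\fet}(\F_p)_{(p)}$, where by Proposition~\ref{prop:main-variant} every mapping spectrum is an $H\Z_{(p)}$-module. The remaining work — and the step I expect to be the main obstacle — is to show that these obstruction classes vanish, i.e.\ that the strict-commutativity input forces $\mathcal P_{(p)}$ to behave like a sum of (shifted) Eilenberg--MacLane objects in $\SH^{S^1,\fet}(\F_p)_{(p)}$ along the relevant part of its Postnikov tower. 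This is exactly the point where the main theorem earns its keep: without it, the higher Postnikov invariants of the Picard object would be inaccessible (they would only be pinned down by the Hopkins--Morel equivalence), whereas the $H\Z_{(p)}$-linearity of the mapping spectra trivializes them. Once this is in place, $\Gamma(u)\colon H\Z\to\Pic^0(\SH(\F_p)_{(p)})\hookrightarrow\Pic(\SH(\F_p)_{(p)})$, pulled back along $f^*$, yields the asserted morphism sending $1$ to $\Sigma^{2,1}\1_S$.
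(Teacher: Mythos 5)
You have assembled the right ingredients (reduction to $\Spec(\F_p)$, Lemma \ref{lem:pic-sp}, Proposition \ref{prop:main-variant}, and the switch/rank computation in $GW(\F_p)_{(p)} \wequi \Z_{(p)}$), but the core of your argument — producing the map above degree $1$ — is left as an acknowledged gap, and the obstruction-theoretic framing you chose is what creates it. You propose to climb the Postnikov tower of $\ul\Pic^0$ and kill the obstructions in $[\ul\Z, \Sigma^{n+2}\ul\pi_{n+1}]$ one at a time, hoping that $H\Z_{(p)}$-linearity of mapping spectra ``trivializes'' them; but $H\Z$-linearity of the homotopy sheaves (or even of ambient mapping spectra) does not by itself force individual $k$-invariant obstructions of a particular object to vanish, and you give no mechanism for this. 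As stated, the higher obstructions remain uncontrolled.

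The missing idea is that Proposition \ref{prop:main-variant} should be applied not sheaf-by-sheaf but to the global sections spectrum itself: writing $\scr P = \Gamma(\ul\Pic^0(\SH(\F_p)_{(p)}))$, Lemma \ref{lem:pic-sp} exhibits $\scr P_{(p)}$ as a mapping spectrum in $\SH^{S^1,\fet}(\F_p)_{(p)}$, hence as an honest $H\Z$-module. A point of an $H\Z$-module $M$, i.e.\ a map $\1 \to M$, extends \emph{canonically} to a map $H\Z \to M$ of spectra via the module structure — no obstruction theory at all. So the class $c: \1 \to \scr P$ of $\Sigma^{2,1}\1$ immediately yields $\tilde c: H\Z \to \scr P_{(p)}$. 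The only remaining issue is lifting $\tilde c$ along $\scr P \to \scr P_{(p)}$; since $\pi_n \scr P \wequi \pi_{n-1}(\1_{(p)})$ is already $p$-local for $n \ge 2$, the square comparing $\scr P \to \scr P_{\le 1}$ with $\scr P_{(p)} \to \scr P_{(p)\le 1}$ is cartesian, and the lifting problem reduces entirely to the $1$-truncated (Picard groupoid) level. There the unique obstruction to extending a map $\1 \to \scr Q$ over $H\Z \wequi \1/\1_{\ge 1}$ is the switch automorphism, which for $\Sigma^{2,1}\1_{(p)}$ is $\lra{-1} = 1$ under the rank isomorphism — the computation you correctly carried out, but which in the paper's argument is the \emph{only} obstruction that ever appears.
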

\begin{proof}
We may assume that $S = \Spec(\F_p)$.
Let $\scr P \subset \Pic(\SH(\F_p)_{(p)})$ be the subgroupoid on the spheres $\Sigma^{2n,n}\1$, so that $\scr P$ is also the global sections of $\ul\Pic^0(\SH(\F_p)_{(p)})$.
Using Lemma \ref{lem:pic-sp} and Proposition \ref{prop:main-variant}, we obtain an $H\Z$-modules structure on $\scr P_{(p)}$.
Consider the following commutative diagram in $\SH$
\begin{equation*}
\begin{tikzcd}
\1 \ar[r] \ar[d, "c"] & H\Z \ar[d, "\tilde c"] \ar[ld, dashed] \ar[ldd, dotted] \\
\scr P \ar[d] \ar[r] & \scr P_{(p)} \ar[d]  \\
\scr P_{\le 1} \ar[r] & \scr P_{(p)\le 1} .
\end{tikzcd}
\end{equation*}
The map $c$ corresponds to $\Sigma^{2,1}\1$.
The map $\tilde c$ is obtained from $c$ via the $H\Z$-module structure on $\scr P_{(p)}$.
We seek to solve the lifting problem indicated by the dashed arrow.
Since the homotopy groups of $\scr P$ are $p$-local in degrees $\ge 2$ we get $\scr P_{>1} \wequi \scr P_{(p)>1}$, or in other words, the vertical fibers in the bottom square are equivalent.
Thus this square is cartesian and our lifting problem is equivalent to the one indicated by the dotted arrow.

Let $\scr Q \in \SH_{[0,1]}$ be arbitrary.
From $H\Z \wequi \1/\1_{\ge 1}$ we get \[ \Map(H\Z, \scr Q) \wequi \Map(\1, \scr Q) \times_{\Map(\1_{\ge 1}, \scr Q)} \{0\}. \]
Since $\Map(\1_{\ge 1}, \scr Q) \wequi \Hom_{\Ab}(\Z/2, \pi_1(\scr Q))$ is discrete, we find that $\Map(H\Z, \scr Q) \to \Map(\1, \scr Q)$ is the inclusion of those components vanishing under the composite \[ \pi_0 \scr Q \wequi \pi_0 \Map(\1, \scr Q) \to \pi_0 \Map(\1_{>1}, \scr Q) \wequi \Hom_{\Ab}(\Z/2, \pi_1 \scr Q) \wequi \pi_1(\scr Q)[2]. \]
Under the equivalence between $\SH_{[0,1]}$ and grouplike symmetric monoidal (``Picard'') groupoids, this is precisely the map sending an object to its switch automorphism.\footnote{To see this, since $\1_{\le 1}$ corresponds to the free Picard groupoid on one generator, it suffices to show that the switch map on this generator is nonzero. But this follows from the existence of \emph{any} object in a Picard groupoid with a non-trivial switch, which is clear.}

Coming back to our lifting problem, we find that any dotted arrow making the left hand triangle commute also makes the right hand triangle commute, and that such an arrow exists if and only if the switch map on $\Sigma^{2,1} \1_{(p)} \in \SH(\F_p)_{(p)}$ is homotopic to the identity.
The switch is given by the image of $\lra{-1}$ \cite[Remark 6.3.5]{morel-trieste} under the rank map (recall that $GW(\F_p)_{(p)} \wequi \Z_{(p)}$), which is indeed $1$.
\end{proof}

\begin{rmk}
We have used the assumption that $p \ne 2$ two times in the above proofs: to know that $(\ul{GW}_{(p)})^\times$ is strictly $\A^1$-invariant (in Lemma \ref{lem:pic-sp}), and to know that $\ul\pi_0 (\1_{\SH^{S^1,\fet}(k)}) \wequi \ul{GW}$ (in Proposition \ref{prop:S1-fet}).
It seems likely that the assumption is unnecessary for the first statement, and that the second is false without the assumption.
\end{rmk}

\subsection{Graded multiplicative structures} \label{subsec:graded-mult}
\subsubsection{Classical case}
Let $A$ be a commutative monoid (in sets).
In what follows, already the case $A=*$ is interesting.
Denote by $^A\Fin$ the category of finite sets with a morphism to $A$.
The category $^A\Fin$ is extensive, and so $\Span({}^A\Fin)$ makes sense.
Note that \[ A \times \Fin \to {}^A\Fin \quad\text{and}\quad A \times \Span(\Fin) \to \Span({}^A\Fin) \] induced by \[ (a, X) \mapsto (X \to \{a\} \hookrightarrow A) \] are the initial functors to a category with finite coproducts, preserving finite coproducts in the second variable; in fact ${}^A\Fin$ is the free finite coproduct cocompletion of $A$.\todo{details...?}
It follows that\NB{details?} \[ \PSh_\Sigma({}^A\Fin) \wequi \Fun(A, \Spc) \quad\text{and}\quad  \PSh_\Sigma(\Span({}^A\Fin)) \wequi \Fun(A, \CMon(\Spc)). \]
Transferring the Day convolution symmetric monoidal structures \cite[Proposition 4.8.1.10]{HA} to the left hand sides and restricting, we obtain symmetric monoidal structures on $^A\Fin$ and $\Span({}^A\Fin)$, informally described as $(X,f) \otimes (Y,g) = (X \times Y,f+g)$.
Then the symmetric monoidal structures on the left hand sides are also induced by Day convolution.

The forgetful functor $\CAlg(\PSh_\Sigma(\Span({}^A\Fin))) \to \PSh_\Sigma(\Span({}^A\Fin))$ admits a left adjoint.
Denote by $\BiSpan^A(\Fin) \subset \CAlg(\PSh_\Sigma(\Span({}^A\Fin)))$ the full subcategory on the essential image of this left adjoint applied to $\Span({}^A\Fin) \subset \PSh_\Sigma(\Span({}^A\Fin))$.
This enjoys the following well-known universality property.
\begin{prop} \label{prop:bispans}
\begin{enumerate}
\item We have $\CAlg(\PSh_\Sigma(\Span({}^A\Fin))) \wequi \PSh_\Sigma(\BiSpan^A(\Fin))$.
\item If $\scr C$ is semiadditive and presentably symmetric monoidal, then there is a canonical functor $\BiSpan^A(\Fin) \to \CAlg(\scr C^A)$, where $\scr C^A$ is given the Day convolution symmetric monoidal structure.
\end{enumerate}
\end{prop}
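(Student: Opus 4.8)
The plan is to bootstrap both statements from the universal property of $\Span(\Fin)$ recorded in Proposition~\ref{prop:semiadd-basics}, transported along the ``$A$-graded'' equivalences just established, and then to use the standard free commutative algebra monad on a presentably symmetric monoidal $\infty$-category.

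For part~(2), I would proceed as follows. First, recall from Proposition~\ref{prop:semiadd-basics} that $\Span(\Fin)$ is the unit of $\Cat_\infty^\oplus$, i.e.\ the initial object of $\CAlg(\Cat_\infty^\oplus)$; hence for any semiadditive, presentably symmetric monoidal $\scr C$ there is a canonical symmetric monoidal functor $\Span(\Fin)\to\scr C$ preserving finite coproducts. Next, observe that for $\scr C$ presentably symmetric monoidal the Day convolution category $\scr C^A = \Fun(A,\scr C)$ is again presentably symmetric monoidal \cite[Corollary 4.8.1.12, Remark 4.8.1.13]{HA}, and it is semiadditive because $\scr C$ is and (co)limits in functor categories are computed pointwise. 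Therefore there is a canonical symmetric monoidal, finite-coproduct-preserving functor $\Span(\Fin)\to\scr C^A$. Tensoring with the discrete symmetric monoidal category $A$ (viewed via the initial functor $A\times\Span(\Fin)\to\Span({}^A\Fin)$ described above, which is the free finite-coproduct-cocompletion of $A$ relative to the second variable) and using the universal property of that construction, this promotes to a symmetric monoidal functor $\Span({}^A\Fin)\to\scr C^A$: concretely one sends $(X\to\{a\})$ to $a_!(\bigoplus_X \1)$, where $a_!\colon \scr C\to\scr C^A$ is the left Kan extension along $\{a\}\hookrightarrow A$, and checks this respects the monoidal structures $(X,f)\otimes(Y,g)=(X\times Y, f+g)$ on both sides. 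Since this functor preserves finite coproducts and both sides are semiadditive, it is determined by the universal property, so it is ``the'' canonical functor. Composing with the free commutative algebra functor $\scr C^A\to\CAlg(\scr C^A)$ and passing to $\PSh_\Sigma$ via part~(1), one restricts to the essential image of $\Span({}^A\Fin)$ under $\Span({}^A\Fin)\subset\PSh_\Sigma(\Span({}^A\Fin))\to\CAlg(\PSh_\Sigma(\Span({}^A\Fin)))\wequi\PSh_\Sigma(\BiSpan^A(\Fin))$, which by definition is $\BiSpan^A(\Fin)$, and obtains $\BiSpan^A(\Fin)\to\CAlg(\scr C^A)$ by the universal property of free commutative algebras (the free commutative algebra functor on $\scr C^A$ is computed from the one on $\PSh_\Sigma(\Span({}^A\Fin))$ since $\Span({}^A\Fin)\to\scr C^A$ is symmetric monoidal cocontinuous, i.e.\ both the monoidal structure and colimits are preserved, so the monad is preserved).

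For part~(1), I would argue that $\BiSpan^A(\Fin)$ is a full subcategory of $\CAlg(\PSh_\Sigma(\Span({}^A\Fin)))$ closed under finite coproducts and generating it under sifted colimits, hence $\PSh_\Sigma(\BiSpan^A(\Fin))\wequi\CAlg(\PSh_\Sigma(\Span({}^A\Fin)))$ by the standard characterization of nonabelian derived categories \cite[\S5.5.8]{HTT}. Explicitly: $\PSh_\Sigma(\Span({}^A\Fin))$ is the free sifted-cocompletion of $\Span({}^A\Fin)$, and $\CAlg$ of it is again generated under sifted colimits by the free commutative algebras on representables (since the free commutative algebra monad is a sifted-colimit-preserving monad on a compactly generated category, being a polynomial functor built from symmetric powers, which commute with filtered colimits, combined with coproducts), and these free algebras are precisely the objects of $\BiSpan^A(\Fin)$; moreover they are the compact projective generators. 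One then applies \cite[Proposition 5.5.8.22]{HTT} (or \cite[Proposition 5.5.8.25]{HTT}). The case $A=*$ is the statement that $\CAlg$ of the category of strictly commutative monoids is $\PSh_\Sigma$ of the ``bispans'' category, which is well-documented; the general $A$-graded case is formally identical since ${}^A\Fin$ and $\Span({}^A\Fin)$ remain extensive/semiadditive with all the same formal properties.

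The main obstacle I anticipate is not logical but coherence-theoretic: namely checking carefully that the functor $\Span({}^A\Fin)\to\scr C^A$ can be constructed \emph{as a symmetric monoidal functor} rather than merely an ordinary one, i.e.\ upgrading the data to an object of $\CAlg(\Cat_\infty^\oplus)$ or its $A$-graded analog. The clean way to do this is to avoid ad hoc constructions entirely: promote ``semiadditive presentably symmetric monoidal'' to a property expressible via modules over $\Span(\Fin)$ in an appropriate category of presentable symmetric monoidal categories, note that $\Span({}^A\Fin)$ is the free such object generated by the commutative monoid $A$ (this is the content of the claimed universal property of ${}^A\Fin$ as a free finite-coproduct-cocompletion, compatibly with Day convolution), and then both statements become applications of a single universal property. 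The ``$\todo$''-flagged details in the excerpt (``details\ldots?'') are exactly these compatibilities, and they are routine but tedious; I would isolate them as a lemma about Day convolution and the free-finite-coproduct-cocompletion functor rather than reproving them inline.
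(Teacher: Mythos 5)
Your proposal is correct and takes essentially the same route as the paper: part (1) is the observation that $\BiSpan^A(\Fin)$ is a set of compact projective generators closed under finite coproducts (via \cite[Proposition 5.5.8.25]{HTT}), and part (2) combines the universal property of $\Span(\Fin)$ from Proposition \ref{prop:semiadd-basics} with the Day convolution universal property and the compatibility of free commutative algebra functors. The only cosmetic difference is that you build $\Span({}^A\Fin)\to\scr C^A$ directly and then apply the free-$\CAlg$ functor, whereas the paper extends $\Span(\Fin)\to\scr C$ over $\PSh_\Sigma(\Span(\Fin))$ and then applies $\CAlg((\ph)^A)$ before restricting to $\BiSpan^A(\Fin)$.
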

\begin{proof}
(1) Essentially by construction, $\BiSpan^A(\Fin)$ constitutes a set of compact projective generators closed under finite coproducts\NB{details?}, so this is clear \cite[Proposition 5.5.8.25]{HTT}.

(2) We obtain from Proposition \ref{prop:semiadd-basics} a symmetric monoidal functor $\Span(\Fin) \to \scr C$, which we can extend over $\PSh_\Sigma(\Span(\Fin))$ by the universal property of the Day convolution symmetric monoidal structure.
The result follows by applying $\CAlg((\ph)^A)$ and passing to the subcategory $\BiSpan^A(\Fin)$.
\end{proof}

Denote by $F^+$ the composite \[ \PSh_\Sigma({}^A\Fin) \xrightarrow{L} \PSh_\Sigma(\Span({}^A\Fin)) \xrightarrow{U} \PSh_\Sigma({}^A\Fin) \] (i.e. the free commutative monoid functor), by $F^\times$ the composite \[ \PSh_\Sigma({}^A\Fin) \xrightarrow{L} \CAlg(\PSh_\Sigma({}^A\Fin)) \xrightarrow{U} \PSh_\Sigma({}^A\Fin) \] where $\CAlg$ refers to the (in general non-cartesian) symmetric monoidal structure constructed above (i.e. $F^\times$ is the free ``multiplicative'' commutative monoid functor), and by $F_R$ the composite \[ \PSh_\Sigma({}^A\Fin) \xrightarrow{L} \PSh_\Sigma(\Span({}^A\Fin)) \xrightarrow{L} \CAlg(\PSh_\Sigma(\Span({}^A\Fin))) \xrightarrow{U} \PSh_\Sigma({}^A\Fin) \] (i.e. the free commutative semiring functor).
Here functors labelled $U$ forget structure, and functors labelled $L$ are appropriate ``free'' functors (left adjoints of appropriate $U$).

\begin{sch}
We can elucidate the above notions by replacing spaces by sets throughout.
For $F^+$, we find that $\CMon(\PSh_\Sigma({}^A\Fin))_{\le 0} \wequi \PSh_\Sigma(\Span({}^A\Fin))_{\le 0} \wequi \CMon(\Spc_{\le 0}^A)$ is the usual category of $A$-graded commutative monoids: objects are $A$-indexed families of commutative monoids $X_a$, traditionally written as $X = \bigoplus_{a \in A} X_a$.
Elements can be added if they are in the same degree (say $a$) using the monoid structure (on $X_a$), and if they are in differing degrees they can be added formally.

In contrast for $F^\times$ we observe that $\CAlg(\PSh_\Sigma({}^A\Fin))_{\le 0} \wequi \CAlg(\Spc_{\le 0}^A)$ consists of families of sets $X = \prod_{a \in A} X_a$ together with maps $X_a \times X_b \to X_{a+b}$ making $X$ into an abelian monoid.

The definition of the category $\CMon(\Spc_{\le 0}^A)$ does not even use the commutative monoid structure on $A$.
However, using the commutative monoid structure, we can give it a further symmetric monoidal structure.
In the traditional notation it takes the form $X \otimes Y = \bigoplus_{a \in A} \bigoplus_{b+c=a} X_b \otimes X_c$.
The functor $F_R$ is related to commutative algebras for this symmetric monoidal structure.
This is the usual notion of an $A$-graded semiring: we are given sets $X_a$ for $a \in A$ together with addition maps $X_a \times X_a \to X_a$ and multipication maps $X_a \times X_b \to X_{b+c}$ (for $a,b,c \in A$), satisfying certain axioms.
\end{sch}

The name of the category $\BiSpan^A(\Fin)$ is justified by the following.
\begin{lem} \label{lem:bispans}
Let $(X,f) \in {}^A\Fin$ and $(Y,g) \in \PSh_\Sigma({}^A\Fin)$.
\begin{enumerate}
\item We have $F_R \wequi F^+ \circ F^\times$.
\item The space $\Map_{\PSh_\Sigma({}^A \Fin)}(X,F^+ Y)$ is equivalent to the groupoid of spans \[ X \leftarrow (T,p) \rightarrow Y \] with $(T,p) \in {}^A\Fin$.
\item The space $\Map_{\PSh_\Sigma({}^A \Fin)}(X,F^\times Y)$ is equivalent to the groupoid of spans \[ X \xleftarrow{\alpha} (T,p) \rightarrow Y, \] where $(T,p) \in {}^A\Fin$ and $\alpha: T \to X$ is a map of finite sets satisfying $f(x) = \sum_{t \in \alpha^{-1}(x)}p(t)$ (so $\alpha$ is not a map in ${}^A\Fin$!).
\item Assuming $(Y,g) \in {}^A\Fin$, the space $\Map_{\BiSpan^A(\Fin)}(X,Y)$ is equivalent to the groupoid of bispans \[ X \leftarrow (T,p) \xleftarrow{\alpha} (T',p') \to Y, \] where the unlabelled maps are maps in $^A\Fin$ and $\alpha$ is as in (3).
\end{enumerate}
\end{lem}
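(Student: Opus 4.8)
The plan is to establish (2) and (3) by direct computation with free commutative monoids and free commutative algebras, written as coproducts of symmetric powers, then to obtain (1) by a formal cocontinuity-plus-monoidality argument, and finally to deduce (4) from (1)--(3) after unwinding the definition of $\BiSpan^A(\Fin)$. The one computational tool I would isolate at the start is the identity
\[
\Map_{\PSh_\Sigma({}^A\Fin)}(X, Q) \wequi \prod_{x \in X_0} Q\big((\ast, f(x))\big)
\]
for a representable $X = (X_0,f) \in {}^A\Fin$ and arbitrary $Q \in \PSh_\Sigma({}^A\Fin)$, valid because in $\PSh_\Sigma({}^A\Fin)$ the object $X$ is the finite coproduct of the points $(\ast, f(x))$; this reduces all mapping spaces below to products of evaluations, and evaluation in a presheaf category commutes with arbitrary colimits. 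I would also record two standard facts: under the identification $\CMon(\PSh_\Sigma({}^A\Fin)) \wequi \PSh_\Sigma(\Span({}^A\Fin))$ used in the discussion preceding the lemma, the forgetful functor $U_1$ is restriction along the symmetric monoidal inclusion $\iota\colon {}^A\Fin \hookrightarrow \Span({}^A\Fin)$, so the free commutative monoid functor is $L_1 = \iota_!$ and in particular $L_1 h_Y \wequi h_Y^{\Span}$ for $Y \in {}^A\Fin$; and Day convolution of representables is the representable on the $\otimes$-product, so the $n$-fold $\otimes$-power of $h_{(Y_0,g)}$ is represented by $\big(Y_0^n,\ (y_i) \mapsto \sum_i g(y_i)\big)$.

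For (2): since $F^+ = U_1 L_1$ preserves sifted colimits and every $Y$ is a sifted colimit of representables, it suffices to take $Y \in {}^A\Fin$; then the two adjunctions above and the Yoneda lemma give $\Map_{\PSh_\Sigma({}^A\Fin)}(X, F^+ Y) \wequi \Map_{\Span({}^A\Fin)}(X, Y)$, which is by definition the groupoid of spans $X \leftarrow (T,p) \to Y$ in ${}^A\Fin$; passing back to the colimit replaces $Y$ by a general object of $\PSh_\Sigma({}^A\Fin)$ and the right leg by a map there. For (3): writing $F^\times Y \wequi \coprod_n (h_Y^{\otimes n})_{h\Sigma_n}$ for the restricted Day convolution $\otimes$ and again reducing to $Y = (Y_0,g)$ representable, the tool above yields
\[
\Map_{\PSh_\Sigma({}^A\Fin)}(X, F^\times Y) \wequi \prod_{x \in X_0}\ \coprod_{n \ge 0}\ \big\{\, \vec y \in Y_0^n : \sum\nolimits_i g(y_i) = f(x)\,\big\}_{h\Sigma_n},
\]
which I would identify with the groupoid of spans $X \xleftarrow{\alpha} (T,p) \to Y$ of the stated form by setting $T = \coprod_x \alpha^{-1}(x)$ and letting $\gamma\colon T \to Y_0$ record the tuple entries; the summation condition on $\alpha$ is precisely the fact that degrees \emph{add} under $\otimes$, which is why $\alpha$ is not a morphism of ${}^A\Fin$. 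A sifted-colimit argument again handles general $(Y,g)$.

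For (1): since $L_1 = \iota_!$ is cocontinuous and, $\iota$ being symmetric monoidal for the $\otimes$-structures, symmetric monoidal for the Day convolutions, the free commutative algebra on $L_1 Y$ is $\coprod_n (L_1 Y)^{\otimes n}_{h\Sigma_n} \wequi \coprod_n L_1(Y^{\otimes n})_{h\Sigma_n} \wequi L_1\big(\coprod_n (Y^{\otimes n})_{h\Sigma_n}\big) \wequi L_1(F^\times Y)$; forgetting down to $\PSh_\Sigma({}^A\Fin)$ gives $F_R \wequi F^+ \circ F^\times$. For (4): unwinding the definition of $\BiSpan^A(\Fin)$ as the essential image of the free commutative algebra functor on the representables of $\Span({}^A\Fin)$, and invoking Proposition~\ref{prop:bispans}(1), for $X, (Y,g) \in {}^A\Fin$ we obtain
\[
\Map_{\BiSpan^A(\Fin)}(X,Y) \wequi \Map_{\CAlg(\PSh_\Sigma(\Span({}^A\Fin)))}\big(\mathrm{Free}(h_X^{\Span}),\, \mathrm{Free}(h_Y^{\Span})\big) \wequi \Map_{\PSh_\Sigma({}^A\Fin)}(X, F_R Y),
\]
the second equivalence by the free/forgetful adjunction for $\CAlg$ followed by $L_1 \dashv U_1$, together with $h_Y^{\Span} \wequi L_1 h_Y$ (so the composite $U_1 U_{\CAlg}\,\mathrm{Free}\,L_1$ is $F_R$ by definition). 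Substituting $F_R \wequi F^+ \circ F^\times$ from (1), then applying (2) to describe $\Map_{\PSh_\Sigma({}^A\Fin)}(X, F^+(F^\times Y))$ as the groupoid of spans $X \leftarrow (T,p) \to F^\times Y$ and applying (3) with $(T,p)$ in the role of $X$ to each map $(T,p) \to F^\times Y$, exhibits such a map as a span $(T,p) \xleftarrow{\alpha} (T',p') \to Y$ with $\alpha$ subject to the summation condition, giving exactly the asserted groupoid of bispans.

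The step I expect to be the main obstacle is the bookkeeping in (4): one must check that the chain of adjunction equivalences matches the construction of $\BiSpan^A(\Fin)$ as a full subcategory of $\CAlg(\PSh_\Sigma(\Span({}^A\Fin)))$ (in particular that the equivalence of Proposition~\ref{prop:bispans}(1) takes representables to free algebras), and---more delicate---that composing the span groupoids of (2) and (3) produces not just a bijection on isomorphism classes but an equivalence of groupoids with the correct bispan morphisms (isomorphisms of $(T,p)$ over $X$ together with isomorphisms of $(T',p')$ compatible with $\alpha$ and the leg to $Y$). A secondary point, already present in (2) and (3), is that although the free functors are built from $\Sigma_n$-homotopy-quotients, which are not sifted colimits, mapping out of the representable $X$ is unaffected because it reduces, via the displayed identity, to evaluations.
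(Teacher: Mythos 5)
Your proposal is correct and follows essentially the same route as the paper's proof: reduce in $X$ to a single graded point (using that representables are completely compact in $\PSh_\Sigma$), compute $F^+$ and $F^\times$ via the free commutative monoid/algebra formulas, obtain (1) from the fact that the left Kan extension along ${}^A\Fin \to \Span({}^A\Fin)$ is cocontinuous and symmetric monoidal, and get (4) by the free/forgetful adjunctions plus composing (2) with (3). The paper packages (1) as a commuting square of free/forgetful adjunctions rather than the explicit $\coprod_n(-)^{\otimes n}_{h\Sigma_n}$ computation, but the underlying justification is the same.
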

\begin{proof}
(1) We have a commutative diagram
\begin{equation*}
\begin{CD}
\CAlg(\PSh_\Sigma({}^A\Fin)) @>f>> \CAlg(\PSh_\Sigma(\Span({}^A\Fin))) \\
@A{S}AA                             @A{S}AA \\
\PSh_\Sigma({}^A\Fin) @>f>> \PSh_\Sigma(\Span({}^A\Fin))
\end{CD}
\end{equation*}
induced by the symmetric monoidal functor $^A\Fin \to \Span({}^A\Fin)$ (this follows from the formula for free commutative algebras \cite[Example 3.1.3.14]{HA}, using that $f$ preserves finite coproducts and hence its extension $\PSh_\Sigma$ preserves colimits).
Denote by $f^r$ the right adjoints of $f$, and by $S^r$ the right adjoints of $S$ (all versions of forgetful functors).
Then $f$ commutes with $S^r$\NB{essentially by definition of $f$ ... ref?}, and hence we deduce that \[ F_R = f^rS^rSf \wequi f^rS^rfS \wequi f^rfS^rS. \]
We have $f^rf \wequi F^+$ and $S^rS \wequi F^\times$, essentially by definition.
The result follows.

(2) Clear from the definition of $F^+$ via $\Span({}^A\Fin)$.
(Reduce to the case $Y \in {}^A\Fin$ using that both functors preserve sifted colimits in $Y$.)
Alternatively, the argument for (3) applies with minor modification.

(3) Fix $Y$ and view either expression as a functor of $X$.
Then both preserve finite coproducts.
Since ${}^A\Fin$ is the free cocompletion of $A$ under finite coproducts, it thus suffices to treat the case $X = \{*\}_a$, i.e. a one point set mapping to $a \in A$.
In this case the groupoid of spans just reduces to (the groupoid of) maps $T \xrightarrow{\beta} Y$ with $a = \sum_{t \in T} g(\beta(t))$.
This is a full subgroupoid of the groupoid $M(Y)$ of all maps $T \to Y$ with $T$ a finite set, consisting of those maps satisfying appropriate degree restrictions.
On the other hand by general theory of free algebras \cite[Proposition 3.1.3.13]{HA} and using that $\{*\}_a$ is completely compact in $\PSh_\Sigma({}^A\Fin)$ we have \[ \Map(\{*\}_a, F^\times (Y,g)) \wequi \coprod_{n \ge 0} \Map(\{*\}_a, (Y,g)^{\otimes n})_{h\Sigma_n}. \]
There is an evident fully faithful map from this groupoid to $M(Y)$ (sending a point $(y_1, \dots, y_n)$ to $(\{1, \dots, n\} \to Y, i \mapsto y_i)$).
One checks easily that the two subgroupoids of $M(Y)$ agree, and hence we obtain the required isomorphism functorially in $X$ (and also $Y$).

(4) Immediate from (1), (2), (3).
\end{proof}

\begin{exm}
We illustrate Lemma \ref{lem:bispans}.
One expects that $\pi_0 \Map(\{*\}_a, F_R X)$ is the same as morphisms of commutative graded semirings from $\NN[t_a]$ (with generator $t_a$ in degree $a$) to the free commutative graded semiring $C$ on $X$.
This should be the same as elements of $C_a$, which themselves are sums of monomials in $X$ of degree $a$, which themselves are products of elements of $X$ of degrees adding up to $a$.
This is precisely the set of isomorphism classes of bispans from $\{*\}_a$ to $X$ described above.
\end{exm}

\begin{exm}
If $A=*$ then $F^+ \wequi F^\times$.
This simplifies the proof of Lemma \ref{lem:bispans}(3) in this case.
\end{exm}

\subsubsection{Motivic case: construction of norms}
Now we consider the motivic analog.
Denote by $^A\Sch_S$ the category of $S$-schemes together with a continuous (i.e. locally constant) function to $A$.
Let $\scr L$ denote a class of smooth morphisms in $\Sm_S$ closed under composition, base change, finite coproducts, finite étale extension and finite étale Weil restriction.
Denote by $\Cor^\fet({}^A\scr L_S)$ the $(2,1)$-category with objects those objects in $^A\Sch_S$ whose structural morphism to $S$ lies in $\scr L$, and morphisms the spans $X \xleftarrow{p} Y \to Z$, where $p$ is finite étale.
As before we get \[ \PSh_\Sigma({}^A\scr L_S) \wequi \PSh_\Sigma(\scr L_S)^A \quad\text{and}\quad \PSh_\Sigma(\Cor^\fet({}^A\scr L_S)) \wequi \PSh_\Sigma(\Cor^\fet)^A. \]
\begin{exm}
We can take $\scr L = \SmQP$ the class of smooth and quasi-projective morphisms, or $\scr L = \SmQA$ the class of smooth and quasi-affine morphisms.
We can also take $\scr L = \SmAff$ the class of smooth \emph{affine} morphisms, but that may not be very reasonable unless $S$ is separated.\footnote{E.g. if $S$ is not separated, then a morphism from an affine scheme to $S$ is not automatically affine, and so when using descent arguments it may not be possible to reduce to the case of affine schemes.}
\end{exm}

In what follows, we also assume that $\scr L$ consists of quasi-projective morphisms and that being in $\scr L$ is finite étale local on the source; this holds in all the above examples.

The proof of Theorem \ref{thm:Spcfet} shows that there is a natural map $\Cor^\fet(\scr L_S) \to \SH(S)$ and hence $\Cor^\fet({}^A\scr L_S) \to \SH(S)^A$.
We now make this compatible with norms.
In fact, as in the classical case, there are in general two different normed structures on $^A\scr L_S$ and the related categories, which we think of as corresponding to product and tensor product.
Similarly there are two normed structures on $\SH(\ph)^A$: a ``pointwise'' one, and an ``interesting'' one (using the commutative monoid structure on $A$) constructed in \cite[\S13.3]{norms}.
While below we will only be interested in the ``interesting'' case, we record the pointwise one for completeness.

To begin with, on the category $^A\scr L_S$ we have the cartesian symmetric monoidal structure $\times$, and also the symmetric monoidal structure given by $(X,f) \otimes (Y,g) = (X \times Y, f + g)$.
Both of these are functorial in $S$ and hence yield \[ ^A\scr L_{(\ph)}^\times, ^A\scr L_{(\ph)}^\otimes: \Sch^\op \to \CMon(\Cat_\infty). \]
Both functors satisfy finite étale descent (finite étale morphisms being of effective descent for quasi-projective schemes \cite[Exposé VIII, Corollaire 7.7]{SGA1}) and hence promote to functors \cite[Corollary C.13]{norms} \[ ^A\scr L_{(\ph)}^\times, ^A\scr L_{(\ph)}^\otimes: \Span(\Sch, \all, \fet) \to \Cat_\infty. \]
The construction $\Cor^\fet(\ph) = \Span(\ph, \fet, \all)$ is functorial and hence we can build \[ \Cor^\fet({}^A\scr L_{(\ph)}^\times), \Cor^\fet({}^A\scr L_{(\ph)}^\otimes): \Span(\Sch, \all, \fet) \to \Cat_\infty. \]

On either functor we can perform the motivic construction sectionwise.
In the first case we invert objects which are spheres in all degrees, and obtain a transformation \[ \PSh_\Sigma(\Cor^\fet({}^A\scr L_{(\ph)}^\times)) \to \SH(\ph)^{A\times}, \] where on the right hand side we mean the pointwise normed structure on $\SH(\ph)^A$.
See \cite[\S3.3.3]{bachmann-MGM} for details.
In the second case we invert spheres in degree $0$ and obtain a transformation \[ \PSh_\Sigma(\Cor^\fet({}^A\scr L_{(\ph)}^\otimes)) \to \SH(\ph)^{A\otimes}; \] for details see \cite[\S13.3]{norms} (in particular, $\otimes$-inverting spheres in degree $0$ has the effect of inverting spheres in all degrees in the usual way).

\begin{comment}
For $A=*$ we have in particular built $\Cor^\fet(\scr L_{(\ph)}^\times): \Span(\Sch, \all, \fet) \to \Cat_\infty$.
This corresponds to a fibration $E \to \Span(\Sch, \all, \fet)$.
The objects of $E$ are pairs $(S, X)$ with $S \in \Sch$ and $E \in \scr L_S$.
We can build a new category $E'$ over $E$, with objects triples $(S,X,f)$ with $S \in \Sch$, $X \in \Sm_S$ and $f: X \to A$, and morphisms from $(S,X,f)$ to $(S',X',f')$ given by certain connected components of the morphism space in $E$ from $(S,X)$ to $(S',X')$.
We can arrange this in such a way that $E' \to \Span(\Sch, \all, \fet)$ is a fibration classifying an appropriate functor\NB{details?} \[ \Cor^\fet({}^A \scr L_{(\ph)}^\otimes): \Span(\Sch, \all, \fet) \to \Cat_\infty. \]
Applying the motivic construction sectionwise we obtain 
See \cite[\S13.3]{norms} for details (in particular, $\otimes$-inverting spheres in degree $0$ has the effect of inverting spheres in all degrees in the usual way).
\end{comment}

We can pass to normed objects, as in \cite[\S7]{norms} and get \begin{equation} \label{eq:NAlg-funct} \NAlg_{\scr L_S}(\PSh_\Sigma(\Cor^\fet({}^A\scr L_{(\ph)}^\otimes))) \to \NAlg_{\scr L_S}(\SH(\ph)^{A\otimes}), \end{equation} and similarly for the pointwise structures.

\subsubsection{Motivic case: free semiring objects}
Denote by \[ \BiSpan^A(\scr L_S) \subset \NAlg_{\scr L_S}(\PSh_\Sigma(\Cor^\fet({}^A\scr L_{(\ph)}^\otimes))) \] the full subcategory on the free normed objects on objects in $\scr L_S$.

We begin with the following very formal result, somewhat analogous to Proposition \ref{prop:bispans}.
\begin{prop} \label{prop:bispans-mot}
\begin{enumerate}
\item We have $\NAlg_{\scr L_S}(\PSh_\Sigma(\Cor^\fet({}^A\scr L_{(\ph)}^\otimes))) \wequi \PSh_\Sigma(\BiSpan^A(\scr L_S))$.
\item There is a canonical functor $\BiSpan^A(\Fin) \to \BiSpan^A(\scr L_S)$ extending the symmetric monoidal coproduct preserving functor $\Fin \to \scr L_S$.
\end{enumerate}
\end{prop}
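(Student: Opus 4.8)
The plan is to mirror the proof of Proposition~\ref{prop:bispans}, which handled the classical case, while dragging the norm structures along.

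\textbf{Part (1).} This should be formally identical to Proposition~\ref{prop:bispans}(1). By the normed algebra formalism of \cite[\S7]{norms}, the category $\NAlg_{\scr L_S}(\PSh_\Sigma(\Cor^\fet({}^A\scr L_{(\ph)}^\otimes)))$ is presentable, and the forgetful functor $U$ to $\PSh_\Sigma(\Cor^\fet({}^A\scr L_S))$ is conservative, admits a left adjoint $F$ (the free normed algebra functor), and preserves sifted colimits; the last point rests on the fact that the norm functors $f_\otimes$ preserve sifted colimits. The representable presheaves on $\scr L_S \subset \Cor^\fet({}^A\scr L_S)$ form a set of compact projective generators of $\PSh_\Sigma(\Cor^\fet({}^A\scr L_S))$ closed under finite coproducts; hence $\BiSpan^A(\scr L_S)=F(\scr L_S)$ consists of compact projective objects (since $U$ preserves sifted colimits and $F$ preserves coproducts), is closed under finite coproducts (since $\scr L$ is closed under finite coproducts and $F$ sends coproducts to coproducts), and generates the whole category under sifted colimits (every normed algebra is a sifted colimit of free algebras by monadicity, and every object of $\PSh_\Sigma(\Cor^\fet({}^A\scr L_S))$ is a sifted colimit of finite coproducts of objects of $\scr L_S$). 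The claim then follows from \cite[Proposition 5.5.8.25]{HTT}.

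\textbf{Part (2).} The idea is to transport the functoriality through $\mu\colon {}^A\Fin \to {}^A\scr L_S$, $(X,f)\mapsto(\coprod_X S,\bar f)$. This $\mu$ is symmetric monoidal for the $\otimes$-structures, preserves finite coproducts, and factors through ${}^A\FEt_S$; crucially, it is compatible with the normed structures, the combinatorial ``product over the fibers'' transfer on ${}^A\Fin$ corresponding under $\mu$ to finite étale Weil restriction (cf. \cite[\S10, \S13.3]{norms}), so that $\mu$ promotes to a natural transformation between the associated functors out of $\Span(\Sch,\all,\fet)$ (with the $\Fin$-side essentially constant). Applying $\Cor^\fet(\ph)$, then $\PSh_\Sigma$ sectionwise, and then passing to normed algebras as in \eqref{eq:NAlg-funct}, and using that the normed algebra construction over the trivial base recovers ordinary commutative algebras for the Day convolution structure, I would obtain a functor
\[ \CAlg(\PSh_\Sigma(\Span({}^A\Fin))) \to \NAlg_{\scr L_S}(\PSh_\Sigma(\Cor^\fet({}^A\scr L_{(\ph)}^\otimes))). \]
Since $\mu$ is symmetric monoidal and coproduct preserving, this functor commutes with the respective free-algebra functors, so it sends the free commutative algebra on $X\in{}^A\Fin$ to the free normed algebra on $\mu(X)\in\scr L_S$; it therefore restricts to the desired functor $\BiSpan^A(\Fin)\to\BiSpan^A(\scr L_S)$, which by construction extends $\mu$, hence $\Fin\to\scr L_S$.

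\textbf{Main obstacle.} Part (1) is routine bookkeeping. The real content of part (2) lies in the two inputs I would need to unwind: that $\mu$ genuinely promotes to a norm-compatible natural transformation over $\Span(\Sch,\all,\fet)$ — i.e. that forming $\coprod_X(\ph)$ intertwines the combinatorial fiberwise product on ${}^A\Fin$ with Weil restriction on ${}^A\scr L_S$, coherently and uniformly in the base — and that the normed algebra formalism over the trivial base is identified with Day-convolution commutative algebras. Both are morally known (the first is essentially the content of the norm constructions in \cite[\S10, \S13.3]{norms}; the second is the degenerate case of \cite[\S7]{norms}), but spelling them out requires working with the cocartesian-fibration presentation underlying the normed formalism, which is the part I expect to be fiddly.
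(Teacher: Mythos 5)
Part (1) of your proposal is fine and is exactly the paper's argument (the paper simply says ``entirely analogous to Proposition \ref{prop:bispans}(1)'' and, like you, rests on \cite[Proposition 5.5.8.25]{HTT}).

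For part (2) there is a genuine gap at the step where you pass from $\CAlg(\PSh_\Sigma(\Span({}^A\Fin)))$ to $\NAlg_{\scr L_S}(\PSh_\Sigma(\Cor^\fet({}^A\scr L_{(\ph)}^\otimes)))$ ``by applying $\Cor^\fet$, $\PSh_\Sigma$ and passing to normed algebras.'' The $\Fin$-side only carries norms indexed by maps of finite sets; after transport along $\mu$ these give norms along \emph{split} finite \'etale maps (fold maps and their relatives), whereas an object of $\NAlg_{\scr L_S}$ must carry coherent norms along \emph{all} finite \'etale maps in $\scr L_S$. A covariant functor induced by $\mu$ therefore lands naturally in $\CAlg$ of the Day convolution structure, not in $\NAlg_{\scr L_S}$; the missing norms cannot be extracted from the source and must be freely adjoined. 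Relatedly, the claim that your functor ``commutes with the respective free-algebra functors, so it sends the free commutative algebra on $X$ to the free normed algebra on $\mu(X)$'' conflates two different constructions: the free $\scr E_\infty$-algebra on $S_+$ is $\coprod_n B\Sigma_n$ (constant classifying spaces), while the free normed object is $\coprod_n B_\et\Sigma_n$ (compare Lemma \ref{lem:uMap-SpanFet} and \cite[Theorem 3.10]{bachmann-MGM}); these do not agree, so a pushforward along a symmetric monoidal functor does not take free commutative algebras to free normed algebras.

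The repair is exactly the paper's proof, which bypasses the geometric compatibility of $\mu$ with Weil restriction altogether: apply Proposition \ref{prop:bispans}(2) to the semiadditive presentably symmetric monoidal category $\scr C = \PSh_\Sigma(\Cor^\fet(\scr L_S))$ to get $\BiSpan^A(\Fin) \to \CAlg(\PSh_\Sigma(\Cor^\fet({}^A\scr L_S)))$, and then compose with the \emph{left adjoint} of the forgetful functor $\NAlg_{\scr L_S}(\PSh_\Sigma(\Cor^\fet({}^A\scr L_{(\ph)}^\otimes))) \to \CAlg(\PSh_\Sigma(\Cor^\fet({}^A\scr L_S)))$. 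Since both steps are left adjoints, free objects go to free objects, and the composite restricts to $\BiSpan^A(\Fin) \to \BiSpan^A(\scr L_S)$ extending $\Fin \to \scr L_S$; no verification that fiberwise products correspond to Weil restrictions is needed, because the normed structure on the target is freely generated rather than transported.
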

\begin{proof}
(1) Entirely analogous to Proposition \ref{prop:bispans}(1).

(2) Compose the functor $\BiSpan^A(\Fin) \to \CAlg(\PSh_\Sigma(\Cor^\fet(\scr L_S))^A)$ coming from Proposition \ref{prop:bispans}(2) with the left adjoint of $\NAlg_{\scr L_S}(\PSh_\Sigma({}^A\Cor^\fet(\scr L_{(\ph)}^\otimes))) \to \CAlg(\PSh_\Sigma(\Cor^\fet({}^A\scr L_S)))$.
\end{proof}

Denote by $F^+$ the composite \[ F^+: \PSh_\Sigma({}^A\scr L_S) \xrightarrow{L} \PSh_\Sigma(\Cor^\fet({}^A\scr L_S)) \xrightarrow{U} \PSh_\Sigma({}^A\scr L_S), \] by $F^\times$ the composite \[ F^\times: \PSh_\Sigma({}^A\scr L_S) \xrightarrow{L} \NAlg_{\scr L_S}(\PSh_\Sigma({}^A\scr L_{(\ph)}^{\otimes})) \xrightarrow{U} \PSh_\Sigma({}^A\scr L_S) \] and by $F_R$ the composite \[ F_R: \PSh_\Sigma({}^A\scr L_S) \xrightarrow{L} \NAlg_{\scr L_S}(\PSh_\Sigma(\Cor^\fet({}^A\scr L_{(\ph)}^\otimes))) \xrightarrow{U} \PSh_\Sigma({}^A\scr L_S), \] where in each case $U$ denotes a functor forgetting structure and $L$ is its left adjoint.

\begin{rmk} \label{rmk:F+}
Under the equivalence $\PSh_\Sigma({}^A\scr L_S) \wequi \PSh_\Sigma(\scr L_S)^A$, essentially by definition the functor $F^+$ is given by applying the ungraded version $F$ degreewise.
\end{rmk}

Here is a motivic analog of Lemma \ref{lem:bispans}.
\begin{prop} \label{prop:Spcfet-mult}
Let $(X,f) \in {}^A\scr L_S$ and $(Y,g) \in \PSh_\Sigma({}^A\scr L_S)$.
\begin{enumerate}
\item We have $F_R \wequi F^+ \circ F^\times$.
\item The space $\Map((X,f),F^+ (Y,g))$ is equivalent to the groupoid of spans \[ X \leftarrow (T,p) \to Y \] such that $(T,p) \in {}^A\scr L_S$, the backwards map is finite étale, and both maps are in $\PSh_\Sigma({}^A \scr L_S)$.
\item The space $\Map((X,f),F^\times (Y,g))$ is equivalent to the groupoid of spans \[ X \xleftarrow{\alpha} (T,p) \to Y \] such that $(T,p) \in {}^A\scr L_S$, $\alpha$ is a finite étale morphism of the underlying (ungraded) schemes, $(T,p) \to (Y,g)$ is a morphism in $\PSh_\Sigma({}^A\scr L_S)$ and $f(\bar x) = \sum_{\bar t \in \alpha^{-1}(\bar x)} p(\bar t)$ for every geometric point $\bar x$ of $X$ (in particular $\alpha$ is generally \emph{not} a morphism in ${}^A\scr L_S$).
\item Assuming $(Y,g) \in {}^A\scr L_S$, the space $\Map_{\BiSpan^A(\scr L_S)}(X,Y)$ is equivalent to the groupoid of bispans \[ X \leftarrow (T,p) \xleftarrow{\alpha} (T',p') \to Y \] with backwards arrows finite étale, unlabelled arrows in $^A\scr L_S$ and $\alpha$ as in (3).
\end{enumerate}
\end{prop}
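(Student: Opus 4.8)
The plan is to mimic the proof of Lemma \ref{lem:bispans} line by line, replacing $\Fin$ and $\Span(\Fin)$ by ${}^A\scr L_S$ and $\Cor^\fet({}^A\scr L_S)$ and their normed enhancements, and using the free algebra machinery of \cite[\S7]{norms} in place of the classical free algebra formulas. For (1) I would use that forgetting additive (finite étale) transfers and forgetting norms are independent operations on $\NAlg_{\scr L_S}(\PSh_\Sigma(\Cor^\fet({}^A\scr L_{(\ph)}^\otimes)))$, so that the two factorizations of the total forgetful functor down to $\PSh_\Sigma({}^A\scr L_S)$ agree; passing to left adjoints and running the diagram chase of Lemma \ref{lem:bispans}(1) gives $F_R \simeq F^+ \circ F^\times$. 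The only genuinely motivic ingredient is that the functor adding finite étale additive transfers commutes with the free normed algebra functor; this is built into the set-up, since both $\Cor^\fet({}^A\scr L_{(\ph)}^\otimes)$ and the free normed algebra monad are assembled from $\Span(\Sch, \all, \fet)$-functoriality and ${}^A\scr L \to \Cor^\fet({}^A\scr L)$ is a morphism of such functors compatible with norms (the same compatibility already exploited in Proposition \ref{prop:bispans-mot}(2)).

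For (2), adjunction identifies $\Map((X,f), F^+(Y,g))$ with $\Map_{\PSh_\Sigma(\Cor^\fet({}^A\scr L_S))}((X,f), L(Y,g))$, where $L$ is left adjoint to restriction along ${}^A\scr L_S \hookrightarrow \Cor^\fet({}^A\scr L_S)$. Both this mapping space and the asserted groupoid of spans preserve sifted colimits in $(Y,g)$, so one reduces to $(Y,g)$ representable, where the identification is just the definition of morphisms in $\Cor^\fet({}^A\scr L_S)$ together with the observation that a span there is graded exactly when both legs respect the locally constant functions to $A$.

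For (3) — which I expect to be the main obstacle — I would fix $(Y,g)$ and regard both sides as functors of $(X,f)$. Both carry finite coproducts in $X$ to products of groupoids (on the span side by splitting $(T,p)$ over the clopen pieces of $X$), so after a standard reduction, by continuity, to $S$ of finite type over $\Z$ one may assume $X$ connected with constant degree. For such $X$ the content is an explicit computation of the free normed algebra functor $F^\times$, which I would carry out via \cite[\S7]{norms} — available since $\scr L$ is stable under finite étale Weil restriction, so that the relevant norms exist: sectionwise over a test scheme $U$, $F^\times(Y,g)$ should be the coproduct, over the groupoid $\FEt_U$ of finite étale $U$-schemes $V$, of the spaces of maps $(V,p) \to (Y,g)$, so that by Yoneda $\Map((X,f), F^\times(Y,g))$ is the groupoid of pairs $(V \xrightarrow{\alpha} X \text{ finite \'etale}, (V,p)\to (Y,g))$, and extracting the component in grading degree $f$ is precisely the condition $f(\bar x) = \sum_{\bar t \in \alpha^{-1}(\bar x)} p(\bar t)$. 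Matching the $\Sigma$-localized free normed monad of \cite{norms} with this groupoid of spans, and carrying the $A$-grading through the norm construction, is where I expect essentially all of the work to lie.

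Finally (4) is formal: by (1), $\Map_{\BiSpan^A(\scr L_S)}(X, Y) \simeq \Map((X,f), F^+ F^\times(Y,g))$; by (2) this is the groupoid of spans $X \leftarrow (T,p) \to F^\times(Y,g)$ with $(T,p) \in {}^A\scr L_S$ and first leg finite étale; and a map $(T,p) \to F^\times(Y,g)$ is, by (3) applied to the representable $(T,p)$, a span $(T,p) \xleftarrow{\alpha} (T',p') \to (Y,g)$ with $\alpha$ as in (3). Concatenating the two spans yields the claimed groupoid of bispans.
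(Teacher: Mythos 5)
Your proposal follows the paper's proof essentially verbatim: the paper likewise disposes of (1), (2) and (4) as ``entirely analogous to Lemma \ref{lem:bispans}'' and handles (3) with the same two inputs, namely an explicit formula for the free normed space functor (the paper cites \cite[Theorem 3.10]{bachmann-MGM} for the colimit-over-finite-étale-schemes description you sketch) and a reduction to the case of connected $X$. The only cosmetic difference is that the paper carries out that last reduction by checking the equivalence of $\Sigma$-presheaves stalkwise via \cite[Lemma 2.6]{norms} (so $X$ becomes connected but only essentially smooth over $S$), rather than via your clopen-decomposition-plus-continuity argument; both are fine.
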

All of these equivalences are natural in $S, X$ and $Y$ in the evident way.
\begin{proof}
The proofs of (1), (2) and (4) are entirely analogous to Lemma \ref{lem:bispans}.

The proof of (3) is also essentially analogous.
The main non-trivial input is a formula for free normed objects for which see \cite[Theorem 3.10]{bachmann-MGM}.\todo{elaborate on this when reference available}
The other thing to observe is that we seek to establish an equivalence between $\Sigma$-presheaves, which we may check stalkwise \cite[Lemma 2.6]{norms}, and hence we may reduce to the case where $X$ is connected (but possibly only essentially smooth over $S$).
\end{proof}

\subsection{Normed spectra and simplicial commutative rings} \label{subsec:normed-SCR}
In order to discuss norms, it is convenient to model $\Spc(S)$ by $\PSh(\SmAff_S)$ instead of $\PSh(\Sm_S)$.
For this we need to assume that $S$ is separated.
In fact we shall assume $S$ affine.
%Everything discussed in \S\ref{sec:main} then holds unchanged with this replacement.
In the language of the previous subsection we denote by $\scr L$ the class of smooth affine morphisms.

Fix a commutative monoid (in sets) $A$.
Let us put $\NAlg_\SmAff(\SH(S)^A) := \NAlg_{\SmAff_{S}}(\SH(\ph)^{A\otimes})$ and similarly for $\NAlg_\SmAff(\PSh_\Sigma(\Cor^\fet({}^A\SmAff_S)))$.
We also write $\BiSpan^A(\FEt_S) \subset \BiSpan^A(\SmAff_S)$ for the full subcategory corresponding to ${}^A\FEt_S \subset {}^A\SmAff_S$.
It follows from \cite[Proposition C.1]{bachmann-splittings}\todo{ref when available} that the formation of these categories is functorial in $S$.\footnote{This is where we use that $S$ is affine and not just separated.}
Using this we see that the functor of \eqref{eq:NAlg-funct} is natural in $S$ and hence yields (via Construction \ref{cnstr:enriched}) \[ \ul{\BiSpan^A(\FEt_S)} \hookrightarrow \ul{\NAlg_\SmAff(\PSh_\Sigma(\Cor^\fet(\SmAff_S))^A)} \to \ul{\NAlg_\SmAff(\SH(S)^A_{(p)})} \in \Cat^{\PSh(\SmAff_S)}_\infty. \]

We make use of the localization $L$ of $\PSh(\SmAff_S)$ with weak equivalences detected by $\PSh(\SmAff_S) \to \SH(S)_{(p)}$.
One checks easily that all result of \S\ref{sec:main} regarding $L$ also hold in the affine setting.

\begin{lem} \label{lem:NAlg-SHp-uMap-local}
Let $S$ be an affine scheme.
The $\PSh(\SmAff_S)$-valued mapping spaces in $\ul{\NAlg_\SmAff(\SH(S)_{(p)}^A)}$ are $L$-local.
\end{lem}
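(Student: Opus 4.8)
The plan is to reduce, exactly as in Lemma \ref{lem:SHp-uMap-local}, to the statement that ordinary (graded, $p$-local) motivic mapping presheaves are $L$-local; the only new point is that mapping spaces in $\NAlg_\SmAff(\SH(S)_{(p)}^A)$ are not internal mapping objects, so I must first resolve them. The forgetful functor $U\colon \NAlg_\SmAff(\SH(S)_{(p)}^A) \to \SH(S)_{(p)}^A$ (take the underlying object over $S$) has a left adjoint, the free normed algebra functor $\mathrm{Free}$ (see \cite[\S7]{norms}, \cite[Theorem 3.10]{bachmann-MGM}); it is conservative, because a normed algebra is a section that is cocartesian over the ``backward'' arrows and hence its value over any $T$ is canonically the pullback of its value over $S$; and it preserves colimits of $U$-split simplicial objects, because norms along finite étale maps preserve sifted colimits (the analogue of the fact that $\CAlg(\scr C) \to \scr C$ preserves sifted colimits). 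By Barr--Beck--Lurie, $U$ is monadic; write $\mathbb{T} = U\,\mathrm{Free}$ for the associated monad. The bar resolution $E \wequi \lvert \mathrm{Free}(\mathbb{T}^\bullet UE)\rvert$ then gives, for $E,F \in \NAlg_\SmAff(\SH(S)_{(p)}^A)$,
\[ \Map_{\NAlg_\SmAff(\SH(S)_{(p)}^A)}(E,F) \wequi \Tot\bigl( [n] \mapsto \Map_{\SH(S)_{(p)}^A}(\mathbb{T}^n UE,\, UF) \bigr). \]

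Next I would promote this to an identification of presheaves on $\SmAff_S$. The assignment $S \mapsto \NAlg_\SmAff(\SH(S)_{(p)}^A)$, the functor $U$, and the monad $\mathbb{T}$ are all natural in $S$ (using the functoriality recorded via \cite[Proposition C.1]{bachmann-splittings} together with the compatibility of norms with smooth base change, \cite[\S7]{norms}); concretely, for $p\colon S' \to S$ with $S' \in \SmAff_S$ one has $UE_{S'} \wequi p^* UE$ (cocartesianness over backward arrows) and $\mathbb{T}_{S'} p^* \wequi p^* \mathbb{T}_S$. Substituting these into the displayed formula over each $S' \in \SmAff_S$ and unwinding the definition \eqref{eq:RC-maps} of the enriched mapping presheaf yields
\[ \ul\Map_{\ul{\NAlg_\SmAff(\SH(S)_{(p)}^A)}}(E,F) \wequi \Tot\bigl( [n] \mapsto \ul\Map_{\ul{\SH(S)_{(p)}^A}}(\mathbb{T}^n UE,\, UF) \bigr) \in \PSh(\SmAff_S). \]

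It remains to check that the right-hand side is $L$-local. First, the $\PSh(\SmAff_S)$-valued mapping spaces of $\ul{\SH(S)_{(p)}^A}$ are $L$-local: as far as the underlying $\infty$-category is concerned $A$ is a discrete set, so $\SH(\ph)_{(p)}^A \wequi \prod_{a\in A}\SH(\ph)_{(p)}$ as presheaves of $\infty$-categories, whence $\ul\Map_{\ul{\SH(S)_{(p)}^A}}(M,N) \wequi \prod_{a\in A} \ul\Map_{\ul{\SH(S)_{(p)}}}(M_a, N_a)$, a product of presheaves that are $L$-local by the affine analogue of Lemma \ref{lem:SHp-uMap-local}; and $L$-local objects of $\PSh(\SmAff_S)$, forming a reflective subcategory, are closed under arbitrary limits, in particular under this product. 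Second, and for the same reason, $L$-local objects are closed under the totalization above. Hence $\ul\Map_{\ul{\NAlg_\SmAff(\SH(S)_{(p)}^A)}}(E,F)$ is $L$-local, as claimed.

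The main obstacle is the bookkeeping in the first two paragraphs: verifying monadicity of $U$ and, more delicately, pinning down the precise sense in which $\mathbb{T}$ commutes with base change along the (non-finite-étale) structure maps $S' \to S$, so that the bar resolution is uniform in $S$ and can be read off at the enriched level. All of this is implicit in the normed-spectra formalism of \cite{norms} and \cite{bachmann-MGM}, but one must keep track of the grading $A$ and of the fact that we work with the $p$-local normed structure of \cite[Proposition 12.8]{norms}. (One can sidestep monadicity by instead computing $\Map_{\NAlg_\SmAff(\SH(S)_{(p)}^A)}(E,F)$ directly as a limit over $\Span(\SmAff_S,\all,\fet)$ of mapping spaces in the fibers $\SH(T)_{(p)}^A$ --- $\NAlg$ being a category of sections of a cocartesian fibration --- and then noting that each such fiberwise mapping presheaf is a $q_*$-pushforward of an $L$-local presheaf, hence $L$-local by Lemma \ref{lem:L-basics}(1); the argument of the previous paragraph then applies verbatim.)
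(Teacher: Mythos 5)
Your proposal is correct and follows essentially the same route as the paper: reduce to free normed algebras (the paper phrases this as ``$\NAlg$ is generated under colimits by free objects and mapping presheaves turn colimits in the first variable into limits,'' which your monadic bar resolution makes explicit), then observe that the graded mapping presheaves in $\ul{\SH(S)^A_{(p)}}$ decompose as products over $a\in A$ of presheaves that are $L$-local by the affine version of Lemma \ref{lem:SHp-uMap-local}, and that $L$-local objects are closed under limits. The compatibility of the free/base-change functors that you flag as the delicate point is exactly the input the paper also relies on (and defers to the normed-spectra references).
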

\begin{proof}
Formation of $\PSh(\SmAff_S)$-valued mapping spaces turns colimits in the first variable into limits (because base change of normed spectra preserves colimits\todo{ref when available}).
Since $L$-local objects are closed under limits and $\NAlg_\SmAff(\SH(S)^A)$ is generated under colimits by free normed spectra (the forgetful functor to $\SH(S)^A$ is conservative by construction)\NB{details?}, we reduce to proving that the mapping presheaves in $\ul{\SH(\ph)^A_{(p)}}$ are $L$-local.
Now if $E_\bullet, F_\bullet \in \SH(S)^A_{(p)}$ then for $a \in A$ we have $E_a, F_a \in \SH(S)_{(p)}$ and also \[ \ul\Map(E_\bullet, F_\bullet) \wequi \prod_{a \in A} \ul\Map(E_a, F_a). \]
Since $L$-local objects are closed under products, the claim follows from Lemma \ref{lem:SHp-uMap-local}.
\end{proof}

\begin{prop} \label{prop:mult-maps}
Let $S$ be an affine, quasi-compact $\F_p$-scheme.
Let $X, Y \in \BiSpan^A(\FEt_S)$.
The natural map \[ \ul{\Map(X,Y)} \to a_\et \pi_0 \ul{\Map(X,Y)} \] is an $L$-equivalence.\NB{It seems that $\scr F \to a_\et \pi_0 \scr F$ is an $L$-equivalence for many other étale sheaves $\scr F$ as well.}
\end{prop}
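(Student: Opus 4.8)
The plan is to combine the explicit bispan description of mapping objects in $\BiSpan^A(\SmAff_S)$ with the contractibility statement of Proposition~\ref{prop:Bet}(3). By Proposition~\ref{prop:Spcfet-mult}(4), for $X,Y\in{}^A\FEt_S$ the presheaf $\ul\Map(X,Y)$ is (the sheaf of groupoids of) bispans $X\leftarrow(T,p)\xleftarrow{\alpha}(T',p')\to Y$ with $T\to X$ and $T'\to T$ finite étale, subject to the grading constraints recorded there. Since $L$-equivalences are Nisnevich-local and $L$ satisfies continuity (all of \S\ref{sec:main} transporting to the affine setting), we may reduce to $S$ of finite type over $\F_p$, and Proposition~\ref{prop:Bet} is in any case available over an arbitrary $\F_p$-scheme.

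First I would decompose $\ul\Map(X,Y)$ according to the isomorphism type $\tau$, over a geometric point, of the underlying diagram of finite sets $|X|\leftarrow T\leftarrow T'\to|Y|$ (the cardinalities $|X|,|Y|$ being fixed). This writes $\ul\Map(X,Y)\wequi\coprod_{\tau\in I}\ul\Map(X,Y)_\tau$ over a set $I$ of combinatorial types, and the key point is that each piece $\ul\Map(X,Y)_\tau$ is a classifying presheaf $B_\et G_\tau$ for a \emph{finite étale} group scheme $G_\tau$ over $S$. Indeed, prescribing a type-$\tau$ bispan means choosing the finite étale covers $T\to X$ and $T'\to T$ and the map $T'\to Y$ with prescribed fibre cardinalities; each such choice is classified over the relevant base by a presheaf of the form $\coprod B_\et\Sigma_n$, and pushing forward (i.e.\ Weil-restricting) along the finite étale structure maps down to $S$ assembles the $\tau$-piece into $B_\et G_\tau$, where $G_\tau$ — the automorphism group scheme of the universal type-$\tau$ bispan — is finite étale because finite étale Weil restriction preserves finite étale group schemes and commutes with forming classifying presheaves. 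The grading data $(p,p')$ and the constraint relating them merely cut down the admissible set $I$; they do not affect the shape of a single piece. Since every $G_\tau$-torsor is étale-locally trivial and finite étale pushforward commutes with $a_\et$, we obtain $a_\et\pi_0\ul\Map(X,Y)\wequi\coprod_{\tau\in I}a_\et\pi_0 B_\et G_\tau\wequi\coprod_{\tau\in I}*$, and the natural map $\ul\Map(X,Y)\to a_\et\pi_0\ul\Map(X,Y)$ restricts on the $\tau$-summand to the projection $B_\et G_\tau\to*$.

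It then suffices to note that each projection $B_\et G_\tau\to*$ is an $L$-equivalence: this is exactly Proposition~\ref{prop:Bet}(3), whose third part requires no $p$-group hypothesis, applied to the finite étale group scheme $G_\tau$ over the $\F_p$-scheme $S$. Finally, $L$-equivalences are closed under the coproduct $\coprod_{\tau\in I}$, being a strongly saturated class closed under colimits (equivalently, $\Sigma^\infty_+(\ph)_{(p)}$ preserves coproducts). Hence $\ul\Map(X,Y)\to a_\et\pi_0\ul\Map(X,Y)$ is an $L$-equivalence, as asserted.

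The main obstacle is the decomposition step — identifying the groupoid of bispans with the claimed coproduct of classifying presheaves of finite étale group schemes, naturally in $S$, and verifying that the relevant automorphism objects are genuinely finite étale so that Proposition~\ref{prop:Bet}(3) applies directly. Unlike the span situation of Lemma~\ref{lem:uMap-SpanFet}, a bispan carries the two intermediate covers $T,T'$ together with the non-grading-preserving middle map $\alpha$ and the constraint on $(p,p')$, so the bookkeeping is genuinely more involved. A convenient way to package it is to observe that $S'\mapsto\BiSpan^A(\FEt_{S'})$ satisfies finite étale (indeed étale) descent, so that $\ul\Map(X,Y)$ is an étale sheaf of $1$-groupoids whose truncation map to $a_\et\pi_0\ul\Map(X,Y)$ has, over any point, fibre the classifying presheaf of the (finite étale) automorphism group scheme of the corresponding bispan; this reduces the whole statement to Proposition~\ref{prop:Bet}(3) fibrewise, the gluing being supplied by the decomposition into rigid combinatorial types. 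One should also keep track of the reduction to finite type over $\F_p$ via continuity, and note that the a priori infinite index set $I$ is harmless by the colimit-closure of $L$-equivalences.
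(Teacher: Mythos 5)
Your overall strategy --- decompose the bispan mapping presheaf into combinatorially-indexed pieces governed by étale classifying spaces and feed them to Proposition \ref{prop:Bet}(3) --- is in the right spirit, but the central claim of your decomposition is false as stated. The type-pieces $\ul\Map(X,Y)_\tau$ are \emph{not} of the form $B_\et G_\tau$ for a group scheme $G_\tau$ over $S$, and consequently $a_\et\pi_0\ul\Map(X,Y)$ is \emph{not} $\coprod_\tau *$. Concretely, take $A=*$, $S=\Spec(\F_p)$, $X=S$, $Y=\Spec(\F_{p^2})$, and the type with $T=T'=S$: this piece is the sheaf of sections of $Y$, i.e.\ $Y$ itself viewed as a $0$-truncated étale sheaf, which has no global point and hence cannot be any $B_\et G_\tau$; its $a_\et\pi_0$ is $Y\ne *$, and the truncation map on this piece is the identity rather than a projection $B_\et G_\tau\to *$. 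Already in the additive case (Lemma \ref{lem:uMap-SpanFet}) the answer is $q_*q^*L_\Sigma\coprod_n B_\et\Sigma_n$ for $q:X\times_S Y\to S$, whose $\pi_0$-sheaf $q_*L_\Sigma\NN$ is a nontrivial sheaf of sets whenever $X\times_S Y$ is non-constant. Each type-piece is in general a gerbe-like object \emph{over} a nontrivial $0$-truncated base, obtained by finite étale pushforward from classifying spaces living over $X$, $T$, etc.; it is neutral only when a global bispan of that type exists.

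The second, related gap is your final reduction ``fibrewise over $a_\et\pi_0$''. $L$-equivalences are Nisnevich-local but \emph{not} étale-local (this is precisely why Proposition \ref{prop:Bet}(3) needs the Sylow/transfer argument rather than étale descent), so you cannot check the statement on fibres of the truncation map over an étale sheaf. The legitimate mechanism is that the relevant pieces are of the form $p_*p^*(\ph)$ for $p$ finite étale, together with the facts that $p_*p^*$ preserves $L$-equivalences (Lemma \ref{lem:L-basics}(3)) and commutes with $a_\et\pi_0$ on $\Sigma$-presheaves; your argument never invokes this. Both defects are repairable, and the repair is essentially what the paper's proof does: after reducing to $X=S$ via $p_*p^*$, it factors $F_R\wequi F^+\circ F^\times$, exhibits $F^\times(Y)_a$ as a summand of the ungraded $F(Y)$ (so that, by the already-established additive case of Remark \ref{rmk:alt-arg}, it is $L$-equivalent to the ind-(finite étale) scheme $a_\et\pi_0F^\times(Y)_a$), and then applies the additive case once more to $F$ of that ind-scheme. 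I would encourage you to rewrite your decomposition so that each piece is explicitly a finite étale pushforward of a classifying space, or to adopt the summand argument.
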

\begin{proof}
Both sides convert finite disjoint unions in the first variable into products.
Since $L$-equivalences are stable under finite products (Lemma \ref{lem:L-basics}), we may thus assume that $X = (X,a)$ for some constant function $a \in A$.\footnote{This is where we use that $S$, and hence $X$, is quasi-compact.}

For $F \in \PSh_\Sigma({}^A\SmAff_S) \wequi \PSh_\Sigma(\SmAff_S)^A$ and $a \in A$, denote by $F_a \in \PSh_\Sigma(\SmAff_S)$ the component at $a$.
Let $p: X \to S$ be the structure map.
Then one finds that $\ul\Map(X,Y) \wequi p_*p^* F_R(Y)_a$, where $F_R \wequi F^+ \circ F^\times$ is the functor from Proposition \ref{prop:Spcfet-mult}.
Since $p$ is finite étale, formation of $a_\et \pi_0$ commutes with $p_*p^*$ (on $\Sigma$-presheaves).
Since $p_*p^*$ preserves $L$-equivalences (by Lemma \ref{lem:L-basics}) we may hence assume that $X=S$.
In other words we need to prove that $F_R(Y)_a \to a_\et \pi_0 F_R(Y)_a$ is an $L$-equivalence.
Recall from Lemma \ref{lem:bispans} and Remark \ref{rmk:F+} that \[ F_R(Y)_a \wequi F(F^\times(Y)_a) \in \PSh_\Sigma(\SmAff_S), \] where $F$ is the $\PSh_\Sigma$ version of the ungraded free normed space functor.
We shall prove the following.
\begin{enumerate}
\item $F^\times(Y)_a$ is $L$-equivalent to an ind-(finite étale) scheme, namely $a_\et \pi_0 F^\times(Y)_a$.
\item $F$ preserves $L$-equivalences.
\item For an ind-(finite étale) scheme $Z$, $F(Z) \wequi a_\et \pi_0 F(Z)$.
\item For any presheaf $G$, we have $a_\et \pi_0 F(a_\et \pi_0 G) \wequi a_\et \pi_0 F(G)$.
\end{enumerate}
Combining (1) and (2) we see that $F(F^\times(Y)_a)$ is $L$-equivalent to $F(a_\et \pi_0 F^\times(Y)_a)$, which by (1) and (3) is $L$-equivalent to $a_\et \pi_0F(a_\et \pi_0 F^\times(Y)_a)$.
This is the same as $a_\et \pi_0 F(F^\times(Y)_a)$, by (4).

It remains to prove the claims.
(2) follows from the fact that $F$ is a colimit of functors of the form $f^*, f_\sharp, f_\otimes$ \cite[Theorem 3.4]{bachmann-MGM}.
Since $a_\et \pi_0 F$ factors through étale sheafification, and the étale sheafification of $F$ is just the ordinary free commutative monoid functors, (4) is straightforward.
(3) is Remark \ref{rmk:alt-arg} (plus compatibility with filtered colimits).
Finally we prove (1). Using (3) and Lemma \ref{lem:summand} below, it will suffice to exhibit $F^\times(Y)_a$ as a summand of $F(Y)$.
Using that $F=F^\times$ in the ungraded setting, forgetting the grading induces a map \[ L_\Sigma \coprod_{a \in A} F^\times(Y)_a \to F(Y) \in \PSh_\Sigma(\SmAff_S). \]
Using Proposition \ref{prop:Spcfet-mult}(3) it is verified to be an equivalence.
This concludes the proof.
\end{proof}

In the proof we used the following.
\begin{lem} \label{lem:summand}
Let $f: X \to Y$ and $g: X' \to Y'$ be maps in $\Spc(S)$.
Then $f \amalg g$ is an $L$-equivalence if and only if $f$ and $g$ are.
\end{lem}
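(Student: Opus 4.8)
The plan is to deduce the statement from the (essentially trivial) corresponding fact in the stable $\infty$-category $\SH(S)_{(p)}$, via the defining property of $L$. Recall that a morphism $\alpha$ of $\Spc(S)$ is an $L$-equivalence if and only if $\Sigma^\infty_+(\alpha)_{(p)}$ is an equivalence in $\SH(S)_{(p)}$; so it suffices to show that $\Sigma^\infty_+(f \amalg g)_{(p)}$ is an equivalence if and only if both $\Sigma^\infty_+(f)_{(p)}$ and $\Sigma^\infty_+(g)_{(p)}$ are.

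First I would record that $\Sigma^\infty_+(\ph)_{(p)}$ is a composite of left adjoints, hence preserves coproducts; combined with the elementary identification $(X \amalg X')_+ \wequi X_+ \vee X'_+$ of pointed objects and the fact that finite coproducts in $\SH(S)_{(p)}$ coincide with finite products, this yields a natural equivalence $\Sigma^\infty_+(X \amalg X')_{(p)} \wequi \Sigma^\infty_+(X)_{(p)} \oplus \Sigma^\infty_+(X')_{(p)}$. Under this equivalence $\Sigma^\infty_+(f \amalg g)_{(p)}$ becomes the map $\Sigma^\infty_+(f)_{(p)} \oplus \Sigma^\infty_+(g)_{(p)}$ with no off-diagonal components.

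It then remains to invoke the following purely formal fact about a semiadditive $\infty$-category: a morphism of the form $a \oplus b \colon A \oplus A' \to B \oplus B'$ (with no cross terms) is an equivalence if and only if $a$ and $b$ are. The ``if'' direction is clear, and the ``only if'' direction holds because $a$ (resp.\ $b$) is a retract of $a \oplus b$ in the arrow category --- exhibited by the inclusions and projections of the two biproducts --- and retracts of equivalences are equivalences. Applying this to $a = \Sigma^\infty_+(f)_{(p)}$ and $b = \Sigma^\infty_+(g)_{(p)}$ concludes the argument.

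I do not anticipate any real obstacle; the only step that is not completely automatic is the identification of the behaviour of the ``add a disjoint basepoint'' functor on coproducts, and this I would spell out explicitly. Everything else is formal manipulation of left adjoints and biproducts.
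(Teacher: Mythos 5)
Your proposal is correct and follows essentially the same route as the paper: the paper's proof also reduces to the observation that $\Sigma^\infty_+(f)_{(p)}$ is a retract of the equivalence $\Sigma^\infty_+(f\amalg g)_{(p)}$, which is exactly your biproduct argument spelled out. No issues.
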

\begin{proof}
Sufficiency is clear.
For necessity, suppose that $f \amalg g$ is an $L$-equivalence.
We need to prove that $\Sigma^\infty_+(f)_{(p)}$ is an equivalence.
This is true since it is a retract of the equivalence $\Sigma^\infty_+(f \amalg g)_{(p)}$.
\end{proof}

\begin{cor} \label{cor:SCR-A}
Let $S$ be an $\F_p$-scheme and $A$ a commutative monoid (in sets).
The canonical functor $\BiSpan^A(\Fin) \to \NAlg(\SH(S)^A_{(p)})$ factors through $\h\BiSpan^A(\Fin)$.
\end{cor}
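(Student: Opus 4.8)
The plan is to run the argument of Remark \ref{rmk:alt-arg} (compare also Corollary \ref{cor:main-result}) once more, now with $\ul{\Span(\FEt_S)}$ replaced by $\ul{\BiSpan^A(\FEt_S)}$ and $\ul{\SH(S)_{(p)}}$ replaced by $\ul{\NAlg_\SmAff(\SH(S)^A_{(p)})}$. The point is that Proposition \ref{prop:mult-maps} already supplies the analogue of the computational input which, in the ungraded case, came from Proposition \ref{prop:Bet}; so no new geometric work is required. First I would reduce to the case where $S$ is affine (hence quasi-compact): the relevant constructions are Zariski-local, so it suffices to treat an affine open cover of $S$, and, since the assignment $S \mapsto L_\et^0\ul{\BiSpan^A(\FEt_S)}$ (introduced below) and the comparison functor to $\ul{\NAlg_\SmAff(\SH(S)^A_{(p)})}$ are functorial in the affine base by \cite[Proposition C.1]{bachmann-splittings}, the factorizations produced in the affine case are natural in the cover and hence glue.

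So assume $S$ affine. As recalled just before Lemma \ref{lem:NAlg-SHp-uMap-local}, Construction \ref{cnstr:enriched} applied to \eqref{eq:NAlg-funct} yields an enriched functor
\[ \ul{\BiSpan^A(\FEt_S)} \longrightarrow \ul{\NAlg_\SmAff(\SH(S)^A_{(p)})} \in \Cat^{\PSh(\SmAff_S)}_\infty, \]
whose target has $L$-local mapping presheaves by Lemma \ref{lem:NAlg-SHp-uMap-local}; here $L$ is the localization of $\PSh(\SmAff_S)$ whose equivalences are detected by $\Sigma^\infty_+(\ph)_{(p)}$. Let $L_\et^0\colon \PSh(\SmAff_S) \to \PSh(\SmAff_S)$, $F \mapsto a_\et\pi_0 F$; it preserves finite products, so applying it to all mapping presheaves yields a $\PSh(\SmAff_S)$-enriched category $L_\et^0\ul{\BiSpan^A(\FEt_S)}$ whose mapping presheaves are discrete, i.e. a $1$-category. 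The unit $\ul{\BiSpan^A(\FEt_S)} \to L_\et^0\ul{\BiSpan^A(\FEt_S)}$ is the identity on objects, and by Proposition \ref{prop:mult-maps} it is an $L$-equivalence on each mapping presheaf, hence an $L$-equivalence of enriched categories. Since the target of the displayed functor is $L$-local, Lemma \ref{lem:mapping-space-loc} shows it factors through the $L$-localization of $\ul{\BiSpan^A(\FEt_S)}$, and naturality of the localization unit then lets us factor it through $L_\et^0\ul{\BiSpan^A(\FEt_S)}$. Evaluating mapping presheaves at the final object $S \in \SmAff_S$ (i.e. passing to underlying $\infty$-categories) produces a factorization of $\BiSpan^A(\FEt_S) \to \NAlg(\SH(S)^A_{(p)})$ through the $1$-category underlying $L_\et^0\ul{\BiSpan^A(\FEt_S)}$. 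Precomposing with the canonical functor $\BiSpan^A(\Fin) \to \BiSpan^A(\FEt_S)$ of Proposition \ref{prop:bispans-mot}(2) — which lands in $\BiSpan^A(\FEt_S)$ because its generators are disjoint unions of copies of $S$, which are finite \'etale over $S$ — and using the universal property of the homotopy category, we conclude that $\BiSpan^A(\Fin) \to \NAlg(\SH(S)^A_{(p)})$ factors through $\h\BiSpan^A(\Fin)$.

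The main obstacle I anticipate is organizational rather than conceptual: one must verify that the enriched functor of \S\ref{subsec:normed-SCR}, the localization $L$, Proposition \ref{prop:mult-maps}, and the assignment $S \mapsto L_\et^0\ul{\BiSpan^A(\FEt_S)}$ are all genuinely natural in the affine base $S$, so that both the reduction to the affine case and the transcription of the argument of Remark \ref{rmk:alt-arg} are legitimate; this is where \cite[Proposition C.1]{bachmann-splittings} enters. Beyond that, the argument carries no new geometric content — Proposition \ref{prop:mult-maps}, which rests on the contractibility statement Proposition \ref{prop:Bet}, already plays the role that Proposition \ref{prop:Bet} played in the proof of Theorem \ref{thm:main-idea}.
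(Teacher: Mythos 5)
Your proof is essentially the paper's: both rest on Proposition \ref{prop:mult-maps} showing that $\ul{\BiSpan^A(\FEt_S)} \to L_\et^0\ul{\BiSpan^A(\FEt_S)}$ is an $L$-equivalence of enriched categories, combined with Lemmas \ref{lem:NAlg-SHp-uMap-local} and \ref{lem:mapping-space-loc} to factor the functor through the $1$-category $L_\et^0\BiSpan^A(\FEt_S)$ and hence through $\h\BiSpan^A(\Fin)$. The only divergence is the initial reduction: instead of Zariski-gluing factorizations over an affine cover (which would require checking coherence of the glued data), the paper uses that $\NAlg(\SH(\ph)^A)$ is functorial in $S$ to base-change from $\Spec(\F_p)$, so the functor over a general $S$ already factors through the one over $\F_p$ and no gluing is needed.
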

\begin{proof}
Since $\NAlg(\SH(S)^A)$ is functorial in $S$ \cite[Corollary C.3]{bachmann-splittings}\todo{ref when available}, we may assume that $S = \Spec(\F_p)$, which is affine and noetherian.
We use the notation from Remark \ref{rmk:alt-arg}.
By Proposition \ref{prop:mult-maps}, the canonical map \[ \ul{\BiSpan^A(\FEt_S)} \to L_\et^0 \ul{\BiSpan^A(\FEt_S)} \] is an $L$-equivalence in $\Cat^{\PSh(\SmAff_S)}_\infty$.
It follows (via Lemma \ref{lem:mapping-space-loc}) that we obtain a factorization \[ \BiSpan^A(\FEt_S) \to L_\et^0 \BiSpan^A(\FEt_S) \to \NAlg(\SH(S)^A_{(p)}), \] where the category in the middle has discrete mapping spaces.
Thus $\BiSpan^A(\Fin) \to \BiSpan^A(\FEt_S) \to L_\et^0 \BiSpan^A(\FEt_S)$ factors through $\h\BiSpan^A(\Fin)$, concluding the proof.
\end{proof}

\begin{cor} \label{cor:SCR}
Let $S$ be an $\F_p$-scheme.
The canonical functor $\BiSpan(\Fin) \to \NAlg(\SH(S)_{(p)})$ factors through $\h\BiSpan(\Fin)$.
\end{cor}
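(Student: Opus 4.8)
The plan is to obtain this as the special case $A = *$ (the trivial monoid) of Corollary \ref{cor:SCR-A}. First I would unwind the definitions of \S\ref{subsec:graded-mult} in this case: then ${}^*\Fin \wequi \Fin$, so $\Span({}^*\Fin) \wequi \Span(\Fin)$, and the tensor symmetric monoidal structure $(X,f) \otimes (Y,g) = (X \times Y, f+g)$ collapses to the cartesian one; consequently $\BiSpan^*(\Fin) \subset \CAlg(\PSh_\Sigma(\Span(\Fin)))$ is exactly what we denote $\BiSpan(\Fin)$, with homotopy category $\h\BiSpan(\Fin)$. On the target side, the ``interesting'' normed structure on $\SH(\ph)^{A\otimes}$ of \cite[\S13.3]{norms} is built from the monoid structure of $A$, which is now trivial, so $\SH(S)^{*\otimes}$ is just $\SH(S)$ with its usual normed structure and $\NAlg(\SH(S)^*_{(p)}) \wequi \NAlg(\SH(S)_{(p)})$. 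Under these identifications the canonical functor $\BiSpan^*(\Fin) \to \NAlg(\SH(S)^*_{(p)})$ provided by Proposition \ref{prop:bispans-mot}(2) together with \eqref{eq:NAlg-funct} is the canonical functor appearing in the statement.

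Given this, Corollary \ref{cor:SCR-A} with $A = *$ immediately yields the factorization of $\BiSpan(\Fin) \to \NAlg(\SH(S)_{(p)})$ through $\h\BiSpan(\Fin)$. All of the identifications above are induced by symmetric monoidal functors of $(2,1)$-categories and $\infty$-categories that are natural in $S$, so they are compatible both with passage to homotopy categories and with the functoriality in $S$ that reduces Corollary \ref{cor:SCR-A} to the case $S = \Spec(\F_p)$ (which is affine and noetherian, where the $\SmAff$ model may be used). I expect no genuine obstacle here: the entire mathematical content is already contained in Corollary \ref{cor:SCR-A}, and what remains is only the bookkeeping that the $A = *$ specialization recovers the unadorned notation of the present statement.
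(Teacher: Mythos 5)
Your proposal is exactly the paper's argument: Corollary \ref{cor:SCR} is deduced as the special case $A=*$ of Corollary \ref{cor:SCR-A}, and your unwinding of the identifications is correct bookkeeping. No further comment needed.
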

\begin{proof}
This is the special case $A=*$.
\end{proof}

\begin{rmk}
The category $\h\BiSpan(\Fin)$ identifies with the category of free finitely generated semirings (i.e. semirings of the form $\NN[t_1, \dots, t_n]$).
The category of additively grouplike objects in $\PSh_\Sigma(\h\BiSpan(\Fin))$ thus identifies with the usual $\infty$-category of ``simplicial commutative rings'' or (the opposite of) ``derived affine schemes''; see e.g. \cite[Chapter 2.2]{HAGII} \cite[Chapter 25]{SAG} \cite{lurie-thesis} for more about these.

The right adjoint of $\PSh_\Sigma(\BiSpan(\Fin)) \to \NAlg(\SH(S))$ constructs an $\scr E_\infty$-ring space underlying any normed spectrum.
The factorization of $\PSh_\Sigma(\BiSpan(\Fin)) \to \NAlg(\SH(S)_{(p)})$ through (grouplike objects in) $\PSh_\Sigma(\h\BiSpan(\Fin))$ in the case $S$ is an $\F_p$-scheme upgrades the underlying $\scr E_\infty$-ring space to a simplicial commutative ring.
\end{rmk}

\subsection{Application to algebraic cobordism} \label{subsec:MGL}
\begin{lem} \label{lem:graded-e}
Let $A$ be a small symmetric monoidal $\infty$-category and $\scr C, \scr D$ presentably symmetric monoidal $\infty$-categories.
Then cocontinuous symmetric monoidal functors $\scr C^A \to \scr D$ are the same as pairs of a symmetric monoidal functor $A^\op \to \scr D$ and a cocontinuous symmetric monoidal functor $\scr C \to \scr D$.
\end{lem}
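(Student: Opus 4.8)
The plan is to deduce the lemma from two standard universal properties: that of the tensor product of presentable $\infty$-categories and that of Day convolution on presheaf $\infty$-categories. Throughout, $\scr C^A$ denotes $\Fun(A,\scr C)$ equipped with the Day convolution symmetric monoidal structure (which exists and is presentably symmetric monoidal since $\scr C$ is), and I write $\Fun^{L,\otimes}(-,-)$ for the $\infty$-category of cocontinuous symmetric monoidal functors and $\Fun^{\otimes}(-,-)$ for that of symmetric monoidal functors.

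First I would recall the identification of presentably symmetric monoidal $\infty$-categories $\scr C^A \simeq \PSh(A^\op) \otimes \scr C$, where $\PSh(A^\op) = \Fun(A,\Spc)$ carries the Day convolution structure induced by the (opposite) symmetric monoidal structure on $A^\op$ and $\otimes$ is the tensor product of presentable $\infty$-categories; under this equivalence the Day convolution structure on $\Fun(A,\scr C)$ corresponds to the tensor product structure on the right-hand side (see \cite[\S4.8.1]{HA}). Next, since the tensor product of presentable $\infty$-categories is the coproduct in $\CAlg(\PrL)$ \cite[\S3.2.4]{HA}, restriction along the two canonical structure maps of the coproduct gives an equivalence
\[ \Fun^{L,\otimes}(\scr C^A,\scr D) \;\simeq\; \Fun^{L,\otimes}(\PSh(A^\op),\scr D) \times \Fun^{L,\otimes}(\scr C,\scr D). \]
Finally, $\PSh(A^\op)$ with Day convolution is the free presentably symmetric monoidal $\infty$-category on the symmetric monoidal $\infty$-category $A^\op$, so restriction along the (symmetric monoidal) Yoneda embedding identifies $\Fun^{L,\otimes}(\PSh(A^\op),\scr D) \simeq \Fun^{\otimes}(A^\op,\scr D)$. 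Combining the displays yields the claim; concretely, the two components of the pair attached to a functor $F\colon \scr C^A \to \scr D$ are the restrictions of $F$ along the canonical symmetric monoidal functors $A^\op \to \scr C^A$ and $\scr C \to \scr C^A$.

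The argument is essentially bookkeeping; the one point that deserves care is the compatibility in the first step — that the Day convolution symmetric monoidal structure on $\Fun(A,\scr C)$ is precisely the one transported across $\scr C^A \simeq \PSh(A^\op)\otimes\scr C$ — together with tracking the variance, so that it is indeed $A^\op$ (and not $A$) that appears as the source of the resulting symmetric monoidal functor. Once the relevant statements from \cite{HA} are assembled, there is no further input.
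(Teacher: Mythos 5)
Your argument is correct and is essentially the paper's own proof: both identify $\scr C^A \simeq \PSh(A^\op)\otimes\scr C$ via \cite[Proposition 4.8.1.17]{HA} and then invoke that $\otimes$ is the coproduct in $\CAlg(\PrL)$ \cite[Proposition 3.2.4.7]{HA}, with your third step (the universal property of Day convolution on $\PSh(A^\op)$ as the free presentably symmetric monoidal $\infty$-category on $A^\op$) simply making explicit what the paper leaves implicit in "the result follows."
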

\begin{proof}
Using \cite[Proposition 4.8.1.17]{HA} one checks that $\PSh(A^\op) \otimes \scr C \wequi \Fun(A, \scr C)$\todo{really??}; here $\otimes$ denotes the tensor product on the category $\PrL$ of presentable $\infty$-categories.
Since $\otimes$ is the coproduct in the category $\CAlg(\PrL)$ of presentably symmetric monoidal $\infty$-categories \cite[Proposition 3.2.4.7]{HA}, the result follows.
\end{proof}
Denote by $\Ss \in \CMon(\Spc)^\gp$ the free grouplike commutative monoid on one generator (in degree $0$); we may view this in particular as a small symmetric monoidal $\infty$-category.
If $\scr C$ is a stable, presentably symmetric monoidal $\infty$-category and $L \in \scr C$ is an invertible object, then there is a canonical adjunction \[ e_{\Ss}: \SH^{\Ss} \adj \scr C: e_\Ss^* \] where $e_\Ss$ is symmetric monoidal and sends the object represented by the generator $1 \in \Ss$ to $L$ (use Lemma \ref{lem:graded-e}).
Soon we shall specialize to the case $\scr C = \SH(\F_p)_{(p)}$ and $L = \Sigma^{2,1}\1$.
Before we record the following general observation.
\begin{lem}\label{lem:period-crit}
Let $\scr C$ be a presentably symmetric monoidal $\infty$-category and $L \in \scr C$ invertible.
Let $E \in \CAlg(\scr C)$.
Then $\coprod_{n \in \Z} L^{\otimes n} E$ (with its obvious multiplication map) lifts to $\CAlg(\scr C^\Z)$ if and only if the canonical map $\Ss \to \Pic(\Mod_E(\scr C)) \in \CMon(\Spc)$ factors through $\Z$.
\end{lem}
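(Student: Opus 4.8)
The plan is to reduce to the case $E=\1$ and then to recognise the two conditions as comparing symmetric monoidal functors out of $\Ss$ and out of $\Z$, via the Day-convolution description of graded algebras. The crux will be matching up, as a \emph{space} and not merely on isomorphism classes, the $\Z$-graded $E_\infty$-refinements of $\coprod_n L^{\otimes n}E$ carrying its evident grading and multiplication with the strong symmetric monoidal functors out of $\Z$ sending the generator to $L\otimes E$; everything else is formal bookkeeping with overcategories, Picard spectra and Postnikov truncations.

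\emph{Step 1: reduction to $E=\1$.} Put $\scr D := \Mod_E(\scr C)$, a presentably symmetric monoidal $\infty$-category, and write $E_{(0)}\in\scr C^\Z$ for $E$ placed in degree $0$. A degreewise computation gives an equivalence $\Mod_{E_{(0)}}(\scr C^\Z)\simeq\scr D^\Z$ of presentably symmetric monoidal $\infty$-categories, compatible with the forgetful functors to $\scr C$ and $\scr C^\Z$. Combined with the standard equivalences $\CAlg(\scr C)_{E/}\simeq\CAlg(\scr D)$ and $\CAlg(\scr C^\Z)_{E_{(0)}/}\simeq\CAlg(\Mod_{E_{(0)}}(\scr C^\Z))\simeq\CAlg(\scr D^\Z)$, this lets me replace $(\scr C,E,L)$ by $(\scr D,\1,L\otimes E)$: the symmetric monoidal left adjoint $(-)_{(0)}\colon\scr C\to\scr C^\Z$ of evaluation at $0$ induces on commutative algebras a left adjoint to $A_\bullet\mapsto A_0$, so any lift $A_\bullet\in\CAlg(\scr C^\Z)$ of $\coprod_n L^{\otimes n}E$ carrying its evident grading — in particular with $A_0\simeq E$ as an algebra — is canonically an object of $\CAlg(\scr C^\Z)_{E_{(0)}/}$; and $\Pic(\Mod_E(\scr C))=\Pic(\scr D)=\Pic(\Mod_\1(\scr D))$. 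So I may assume $E=\1$; writing $L$ for the invertible object of $\scr D$, I must show that $\coprod_{n\in\Z}L^{\otimes n}$, with its evident grading and multiplication, lies in the image of $\CAlg(\scr D^\Z)\to\CAlg(\scr D)$, $A_\bullet\mapsto\bigoplus_n A_n$ (left Kan extension along $\Z\to *$), if and only if the canonical map $\Ss\to\Pic(\scr D)$ classifying $L$ factors through the truncation $\Ss\to\Z$ in $\CMon(\Spc)$.

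\emph{Step 2: graded lifts as symmetric monoidal functors.} By the theory of Day convolution, $\CAlg(\scr D^\Z)\simeq\Fun^{\mathrm{lax}}(\Z,\scr D)$ (lax symmetric monoidal functors), and under this identification the functor $A_\bullet\mapsto\bigoplus_n A_n$ sends $\phi$ to $\bigoplus_n\phi(n)$ with multiplication assembled from the lax structure maps $\phi(i)\otimes\phi(j)\to\phi(i+j)$ and the unit $\1\to\phi(0)$. Hence a lift of $\coprod_n L^{\otimes n}$ compatible with the evident grading and multiplication is a lax symmetric monoidal functor $\phi$ with $\phi(n)\simeq L^{\otimes n}$ whose structure maps are the canonical equivalences $L^{\otimes i}\otimes L^{\otimes j}\simeq L^{\otimes i+j}$; such a $\phi$ is automatically \emph{strong} symmetric monoidal, and conversely every strong symmetric monoidal $\phi\colon\Z\to\scr D$ with $\phi(1)\simeq L$ yields such a lift. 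Thus the space of lifts is the fibre over $[L]$ of the evaluation $\Fun^{\otimes}(\Z,\scr D)\to\Omega^\infty\Pic(\scr D)$, $\phi\mapsto\phi(1)$. This is the step requiring care: one must make the lax-functor dictionary precise enough to obtain a genuine equivalence of spaces here (so that the higher-coherence data on the two sides agree), not just a bijection on components.

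\emph{Step 3: passage to spectra.} Since $\Z$ is a grouplike $E_\infty$-space, any symmetric monoidal functor out of it lands in the Picard groupoid, and symmetric monoidal functors between Picard $\infty$-groupoids are the same as maps of connective spectra; hence $\Fun^\otimes(\Z,\scr D)\simeq\Map_{\CMon(\Spc)}(\Z,\Pic(\scr D))$, while $\Fun^\otimes(\Ss,\scr D)\simeq\Omega^\infty\Pic(\scr D)$, and the evaluation $\phi\mapsto\phi(1)$ is restriction along the symmetric monoidal truncation $\Ss\to\Z$, which on spectra is the $0$-truncation $\Ss\to\tau_{\le0}\Ss=\Z$. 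Therefore a lift exists if and only if the point $[L]$ of $\Omega^\infty\Pic(\scr D)=\Map_{\CMon(\Spc)}(\Ss,\Pic(\scr D))$ lies in the image of $\Map_{\CMon(\Spc)}(\Z,\Pic(\scr D))\to\Map_{\CMon(\Spc)}(\Ss,\Pic(\scr D))$, i.e. if and only if the classifying map $\Ss\to\Pic(\scr D)=\Pic(\Mod_E(\scr C))$ factors through $\Z$, which is the assertion.
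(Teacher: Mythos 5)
Your proposal is correct and follows essentially the same route as the paper's proof: reduce to modules over $E$ (placed in degree $0$), identify $\Z$-graded commutative algebras with lax symmetric monoidal functors $\Z \to \Mod_E(\scr C)$ via Day convolution, observe that the prescribed multiplication forces the lax structure maps to be equivalences, and match strong symmetric monoidal functors out of $\Z$ with factorizations of $\Ss \to \Pic(\Mod_E(\scr C))$. The point you flag as requiring care in Step 2 is treated at the same level of informality in the paper ("it is easily verified to have the claimed properties"), so nothing essential is missing relative to the published argument.
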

An object $E$ as above is called \emph{periodizable}.
\begin{proof}
Let $\tilde E \in \CAlg(\scr C^\Z)$ be a lift.
Restriction followed by left Kan extension along $\{0\} \to \Z$ makes $\tilde E$ into an algebra under $F$, where $F$ can be described informally as $F_0 = \tilde E_0 \wequi E$ and $F_i = 0$ else.
Observe that $\Mod_{F}(\scr C^\Z) \wequi (\Mod_E(\scr C))^\Z$.\NB{details?}
We find that $\tilde E$ promotes to an object of $\CAlg((\Mod_E(\scr C))^\Z)$, i.e. a lax symmetric monoidal functor $\Z \to \Mod_E(\scr C)$ \cite[Proposition 2.12]{glasman2013day}.
The functor is actually strong (by construction and assumption, the lax witness maps $E \otimes_E E \to E$ are given by multiplication), proving the first half of the claimed equivalence.
Conversely, given $\Z \to \Pic(\Mod_E(\scr C))$ we can compose with $\Pic(\Mod_E(\scr C)) \to \Mod_E(\scr C)$ to obtain a symmetric monoidal functor $\Z \to \Mod_E(\scr C)$, whence (by \cite[Proposition 2.12]{glasman2013day} again) an object of $\CAlg((\Mod_E(\scr C))^\Z)$.
It is easily verified to have the claimed properties.

%Conversely, given $\Z \to \Pic(\Mod_E(\scr C))$ we obtain a symmetric monoidal left adjoint $e_\Z: (\Mod_E(\scr C))^\Z \to \Mod_E(\scr C)$ (use Lemma \ref{lem:graded-e}), and applying the right adjoint to $E$ yields the desired lift.
\end{proof}

\begin{exm}
If $S$ is an $\F_p$-scheme and $p \ne 2$, then by Corollary \ref{cor:HZ-Pic} any $E \in \CAlg(\SH(S)_{(p)})$ can be $(2,1)$-periodized.
\end{exm}
\begin{rmk} \label{rmk:periodic-lift-compat}
The proof of Lemma \ref{lem:period-crit} shows that if $\tilde E$ is a periodic lift of $E$ and $\Z \to \Pic(\scr C)$ is the induced functor, then $\tilde E \wequi e_\Z^*(E)$.
\end{rmk}

For a scheme $X$ we have a functor \[ \Vect(X)^\wequi \to \Shv_\Nis(\Sm_{X+})_{\le 0}^\Z, V \mapsto (n \mapsto V_n/V_n \setminus 0), \] where for a vector bundle $V$ on $X$ we denote by $V_n$ the restriction of $V$ to the (clopen) subscheme where $V$ has rank $n$.
Using \cite[Lemma 3.6 and Proposition 3.13]{norms} this is readily made functorial in $X \in \Span(\Sm_S, \all, \fet)$ and so we obtain \[ \Vect(\ph)^\wequi \to \Shv_\Nis(\Sm_{(\ph)})_*^\Z \in \Fun(\Span(\Sm_S, \all, \fet), \CMon(\Cat_\infty)). \]
Composing with motivic localization and stabilization, group completing and Zariski sheafifying the source, we obtain \[ j_\Z: K \to \SH(\ph)^\Z \in \Fun(\Span(\Sm_S, \all, \fet), \Spc). \]
See \cite[\S16.2]{norms} for a closely related construction.
Applying the motivic Thom spectrum functor (i.e. taking the motivic colimit of this diagram in the sense of \cite[\S2]{bachmann-colimits}, which preserves normed objects by \cite[\S3.4]{bachmann-colimits}) we obtain an object \[ \PMGL \in \NAlg(\SH(S)^\Z) \] lifting the normed spectrum $\oplus_n \Sigma^{2n,n} \MGL$ of \cite[Theorem 16.19]{norms}.
Forgetting some structure, we can also view this as an object of $\CAlg(\SH(S)^\Z)$.
\begin{lem} \label{lem:PMGL-twice}
Let $S$ be a scheme.
Denote by $F$ the canonical functor $\SH^\Z \to \Mod_\MGL(\SH(S))^\Z$, by $F^*$ its right adjoint and by $U: \SH^\Z \to \SH^\Ss$ the forgetful functor.
Then we have \[ e_\Ss^*(\MGL) \wequi UF^* \PMGL \in \CAlg(\SH^\Ss). \]
\end{lem}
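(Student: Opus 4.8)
The plan is to reduce the statement to Remark~\ref{rmk:periodic-lift-compat} together with the functoriality of the graded realization adjunctions $e_\Z$, $e_\Ss$ in their target. First I would record that $\PMGL$ is a periodic lift of $\MGL\in\CAlg(\SH(S))$ in the sense of Lemma~\ref{lem:period-crit}: by construction it lifts the normed spectrum $\bigoplus_n\Sigma^{2n,n}\MGL$ of \cite[Theorem 16.19]{norms}, whose $n$-th summand is $(\Sigma^{2,1}\MGL)^{\otimes_\MGL n}$ with the tautological multiplication, so the classifying factorization is $\Z\to\Pic(\Mod_\MGL(\SH(S)))$, $n\mapsto\Sigma^{2n,n}\MGL$. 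Since $\PMGL_0\wequi\MGL$, the object $\PMGL$ is canonically a module over $\MGL$ placed in degree $0$, hence an object of $\CAlg(\Mod_\MGL(\SH(S))^\Z)$, and Remark~\ref{rmk:periodic-lift-compat} identifies it there with $r^*(\MGL)$, where $r\colon\Mod_\MGL(\SH(S))^\Z\to\Mod_\MGL(\SH(S))$ is the symmetric monoidal functor sending the grading generator to $\Sigma^{2,1}\MGL$, $r^*$ its right adjoint, and $\MGL$ the unit of $\Mod_\MGL(\SH(S))$.

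Next I would contemplate the square of cocontinuous symmetric monoidal functors
\[
\begin{CD}
\SH^\Z @>{F}>> \Mod_\MGL(\SH(S))^\Z \\
@V{e_\Z}VV @VV{r}V \\
\SH(S) @>{-\otimes\MGL}>> \Mod_\MGL(\SH(S)),
\end{CD}
\]
where $e_\Z\colon\SH^\Z\to\SH(S)$ is the realization sending the grading generator to $\Sigma^{2,1}\1$. This commutes: by Lemma~\ref{lem:graded-e} both composites are the unique cocontinuous symmetric monoidal functor $\SH^\Z\to\Mod_\MGL(\SH(S))$ carrying the grading generator to $\Sigma^{2,1}\MGL$ and restricting to the canonical functor on $\Sp$. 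Passing to right adjoints gives a commuting square of lax symmetric monoidal functors, and evaluating it at the unit $\MGL$ of $\Mod_\MGL(\SH(S))$ — whose image under the right adjoint of $-\otimes\MGL$, namely the forgetful functor, is $\MGL\in\CAlg(\SH(S))$ — yields $F^*r^*(\MGL)\wequi e_\Z^*(\MGL)$ in $\CAlg(\SH^\Z)$. Combined with the first paragraph, $F^*\PMGL\wequi e_\Z^*(\MGL)$.

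Finally, $U$ is restriction along the truncation $\Ss\to\Z=\pi_0\Ss$, so its left adjoint $\phi_!$ satisfies $e_\Z\circ\phi_!\wequi e_\Ss$, once more by Lemma~\ref{lem:graded-e} (both send the generator of $\SH^\Ss$ to $\Sigma^{2,1}\1$ and restrict to the canonical functor on $\Sp$); passing to right adjoints gives $U\circ e_\Z^*\wequi e_\Ss^*$. Therefore $UF^*\PMGL\wequi Ue_\Z^*(\MGL)\wequi e_\Ss^*(\MGL)$, which is the assertion.

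The one step that is not formal bookkeeping with adjunctions and the universal property of Day convolution is the identification in the first paragraph: one must unwind the motivic Thom spectrum construction applied to $j_\Z\colon K\to\SH(\ph)^\Z$ far enough to see that the resulting algebra structure on $\PMGL$ is the $\Sigma^{2,1}$-periodic one classified by the periodicity element, rather than some other lift of $\bigoplus_n\Sigma^{2n,n}\MGL$ — which is exactly what licenses the appeal to Remark~\ref{rmk:periodic-lift-compat}.
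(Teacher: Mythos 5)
Your proposal is correct and follows essentially the same route as the paper: identify $\PMGL$ as the image of $\MGL$ under the right adjoint of the periodization functor via Remark \ref{rmk:periodic-lift-compat}, assemble a commuting diagram of cocontinuous symmetric monoidal functors using Lemma \ref{lem:graded-e}, and pass to right adjoints. The only difference is cosmetic — you split the composite through $\SH^\Z$ and handle the Kan extension along $\Ss\to\Z$ at the end, whereas the paper routes through $\SH(S)^\Ss$ and $\Mod_\MGL(\SH(S))^\Ss$ — and your closing caveat about verifying that the Thom-spectrum multiplication on $\PMGL$ is the periodic one is a fair point on which the paper is equally terse.
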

\begin{proof}
Consider the diagram of cocontinuous symmetric monoidal functors
\begin{equation*}
\begin{tikzcd}
\SH^\Ss \ar[r, "(e)^\Ss"] \ar[dr, "e_\Ss" swap] & \SH(S)^\Ss \ar[r, "(\otimes \MGL)^\Ss"] \ar[d, "e'_\Ss"] &[3em] \Mod_\MGL(\SH(S))^\Ss \ar[d, "e'_\Ss"] \ar[r, "p_!"] & \Mod_\MGL(\SH(S))^\Z \ar[dl, "G"] \\
            & \SH(S)   \ar[r, "\otimes \MGL"] & \Mod_\MGL(\SH(S)).
\end{tikzcd}
\end{equation*}
The functors $(\ph)^\Ss$ are obtained by applying $(\ph)$ levelwise; here $e: \SH \to \SH(S)$ is the unique symmetric monoidal cocontinuous functor.
The functors $e_\Ss$, $e'_\Ss$ are all induced (via Lemma \ref{lem:graded-e}) by $\Sigma^{2,1} \1 \in \Pic(\SH(S))$.
The functor $p_!$ is left Kan extension along $p: \Ss \to \Z$.
The functor $G$ is obtained from the existence of $\PMGL$ by Lemma \ref{lem:period-crit}.
The diagram commutes by several applications of Lemma \ref{lem:graded-e}.
Applying $G^*$ (the right adjoint of $G$) to $\MGL$ yields $\PMGL$ by Remark \ref{rmk:periodic-lift-compat}.
Since the composite of the top row is left adjoint to $UF^*$, the desired result follows from commutativity of the diagram of right adjoints.
\end{proof}

\begin{cor}
Let $S$ be an $\F_p$-scheme.
\begin{enumerate}
\item The object $\Omega^\infty e_\Ss^*(\MGL_{(p)}) \in \CAlg(\Spc^\Ss)$ has the structure of a $\Z$-graded simplicial commutative ring (i.e. an object of $\PSh_\Sigma(\h\BiSpan^\Z(\Fin))$).
\item If we denote by $L_* \in \CAlg(\SH^\Ss_{\le 0}) \wequi \CAlg(\Ab^\Z)$ the Lazard ring, then there is a canonical homotopy class of morphisms $L_* \to e_\Ss^*(\MGL_{(p)})$.
\end{enumerate}
\end{cor}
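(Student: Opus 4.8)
The plan is to read off (1) from Corollary~\ref{cor:SCR-A} with $A=\Z$, pushed through Lemma~\ref{lem:PMGL-twice}, and to deduce (2) from (1) together with Lazard's theorem that the Lazard ring is a polynomial ring. For (1): localise $\PMGL$ to $\PMGL_{(p)}\in\NAlg(\SH(S)^\Z_{(p)})$. Corollary~\ref{cor:SCR-A} (with $A=\Z$) says that the canonical cocontinuous functor $\Phi\colon\PSh_\Sigma(\BiSpan^\Z(\Fin))\to\NAlg(\SH(S)^\Z_{(p)})$ factors cocontinuously through the localisation $q\colon\BiSpan^\Z(\Fin)\to\h\BiSpan^\Z(\Fin)$, say $\Phi\wequi\bar\Phi\circ q_!$; hence its right adjoint factors as $q^*\circ\bar G$, where $\bar G$ is right adjoint to $\bar\Phi$. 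In particular $\bar G$ takes values in $\PSh_\Sigma(\h\BiSpan^\Z(\Fin))$, and—$\SH(S)^\Z_{(p)}$ being stable—in its additively grouplike part, the $\Z$-graded simplicial commutative rings. Set $R:=\bar G(\PMGL_{(p)})$. By the $\Z$-graded form of the remark after Corollary~\ref{cor:SCR}, the underlying $\scr E_\infty$-ring space of $R$ is the $\scr E_\infty$-ring space underlying the normed spectrum $\PMGL_{(p)}$ (its degree-$n$ part being $\Map_{\SH(S)}(\1,(\PMGL_{(p)})_n)$); tracing this through the functors appearing in the proof of Lemma~\ref{lem:PMGL-twice} identifies it, after restricting the grading along $p\colon\Ss\to\Z$, with $\Omega^\infty e_\Ss^*(\MGL_{(p)})\in\CAlg(\Spc^\Ss)$. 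This is (1), with $R$ the asserted lift.

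For (2): a map of $\Z$-graded simplicial commutative rings induces one of the underlying (connective) $\scr E_\infty$-ring spectra, and—combining this with the grading restriction along $p$ and the connective-cover map, whose target is $e_\Ss^*(\MGL_{(p)})$ by (1)—a map $L_*\to R$ of $\Z$-graded simplicial commutative rings yields one $L_*\to e_\Ss^*(\MGL_{(p)})$ in $\CAlg(\SH^\Ss)$. So it suffices to produce the former, with $L_*$ regarded as discrete. The orientation of $\MGL$ endows the $\Z$-graded ring $\pi_0 R$ of degreewise homotopy groups (which by (1) is the corresponding ring of homotopy groups of $e_\Ss^*(\MGL_{(p)})$) with a formal group law, i.e.\ with a map of $\Z$-graded rings $\phi\colon L_*\to\pi_0 R$. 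By Lazard's theorem $L_*$ is a polynomial ring $\Z[x_1,x_2,\dots]$ with $\deg x_i=i$, hence a \emph{free} $\Z$-graded simplicial commutative ring; consequently $\pi_0\Map(L_*,R)\wequi\prod_{i\ge1}\pi_0(R_i)\wequi\pi_0\Map(L_*,\pi_0 R)$ in $\Z$-graded simplicial commutative rings, so $\phi$ lifts to a unique homotopy class of maps $L_*\to R$. This is the desired canonical morphism; it induces $\phi$ on degreewise $\pi_0$, recovering the classical identification $L_*\cong\MU_*$.

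The genuine ingredients—Corollary~\ref{cor:SCR-A}, Lemma~\ref{lem:PMGL-twice}, and polynomiality of the Lazard ring—are all available, so the real work is organisational: one must verify carefully that $\bar G$ computes the $\scr E_\infty$-ring space underlying $\PMGL_{(p)}$ (the graded version of the remark after Corollary~\ref{cor:SCR}) and keep precise track of the twists that, via $p\colon\Ss\to\Z$ and Lemma~\ref{lem:PMGL-twice}, connect the $\Ss$-graded object $e_\Ss^*(\MGL_{(p)})$ with the $\Z$-graded $R$.
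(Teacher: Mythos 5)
Your proposal is correct and follows essentially the same route as the paper: part (1) is Corollary \ref{cor:SCR-A} with $A=\Z$ applied to the normed lift $\PMGL$, combined with the identification $\Omega^\infty e_\Ss^*(\MGL_{(p)}) \wequi a^*b^*c^*(\PMGL)$ from Lemma \ref{lem:PMGL-twice}, and part (2) lifts the formal-group-law map $L_* \to \pi_{2*,*}\MGL_{(p)}$ using that $L_*$ is polynomial, hence free as a $\Z$-graded simplicial commutative ring. The only difference is presentational (the paper phrases (1) via the factorization of the composite $\BiSpan^\Z(\Fin)\to\NAlg(\SH(S)^\Z_{(p)})$ rather than via the right adjoint $\bar G$, but these are the same argument).
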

\begin{proof}
(1) Consider the commutative diagram
\begin{equation*}
\begin{CD}
\CMon(\Spc^\Ss) @>a>> \CMon(\Spc^\Z) \\
@VVV              @VbVV   \\
\SH^\Ss @>>> \SH^\Z @>c>> \SH(S)^\Z.
\end{CD}
\end{equation*}
All the functors are cocontinuous and symmetric monoidal (induced by $\CMon(\Spc) \to \SH$, $\Ss \to \Z$ and $\SH \to \SH(S)$); we denote their right adjoints by $a^*$ and so on.
Lemma \ref{lem:PMGL-twice} shows that \[ \Omega^\infty e_\Ss^*(\MGL_{(p)}) \wequi a^* b^* c^* (\PMGL) \in \CAlg(\CMon(\Spc^\Ss)). \]
The composite \[ \BiSpan^\Z(\Fin) \to \CAlg(\CMon(\Spc^\Ss)) \xrightarrow{cba} \NAlg(\SH(S)^\Z_{(p)}) \] factors through $\h\BiSpan^\Z(\Fin)$ by Corollary \ref{cor:SCR-A}.
The claim follows since $\PMGL$ lifts to $\NAlg(\SH(S)^\Z)$.

(2) Since $\pi_{2*,*} \MGL_{(p)}$ carries a formal group law we obtain a ring homomorphism $r: L_* \to \pi_{2*,*} \MGL_{(p)}$.
As a $\Z$-graded simplicial commutative ring $L_*$ is free (on a non-canonical set of generators), and hence $r$ lifts (uniquely up to homotopy) to \[ \tilde r: L_* \to \Omega^\infty e_\Ss^*(\MGL_{(p)}) \in \PSh_\Sigma(\h\BiSpan^\Z(\Fin)). \]
The desired result follows by forgetting structure and adjunction.
\end{proof}

\begin{exm}
We can express the motivic Brown--Peterson spectrum (at the implicit prime $p$) as \[ \BPGL \wequi \MGL_{(p)} \otimes_{e_\Ss(L_*)} e_\Ss(\BP_*) \in \SH(\F_p)_{(p)}. \]
In particular, this admits an $\scr E_\infty$-ring structure.
Similar observations apply to the motivic Morava $K$-theory spectra.

Again these $\scr E_\infty$-structures can also be obtained as easy consequences of an affirmation of the Hopkins--Morel problem (arguing similarly to \cite[Example 1.5]{bachmann-nilpotence}).
\end{exm}

%\begin{qst}
%We can view $\pi_*(\CB(\MU))$ as a discrete cosimplical commutative graded ring spectrum.
%Is it possible to construct \[ \pi_*(\CB(\MU)) \to e_\Ss^* \CB(\MGL_{(p)}) \in \Fun(\Delta, \SH^\Ss)? \]
%\end{qst}

\bibliographystyle{alpha}
\bibliography{bibliography}

\end{document}